\documentclass[11pt]{amsart}
\usepackage{amscd, amssymb,latexsym, amsmath, amscd, amsmath, comment, bbm, amsfonts}
\usepackage{tikz-cd}
\usepackage{extarrows}
\usepackage{mathtools}
\usepackage{hyperref}
\usepackage[backend=biber, style=alphabetic]{biblatex}
\hypersetup{%
        colorlinks,
        breaklinks=true, 
        plainpages=false,%
        citecolor=blue,
        linkcolor=blue,
        urlcolor=blue,
        bookmarksopen=true,%
        bookmarksnumbered=false,%
        bookmarksdepth=5%
}

\newcommand{\Q}{\mathbb{Q}}

\newcommand{\Z}{\mathbb{Z}}

\newcommand{\F}{\mathbb{F}}

\newcommand{\End}{{\rm End}\,}

\newcommand{\Fq}{\mathbb{F}_q}
\newcommand{\im}{\rm{Im}\>}
\newcommand{\Ker}{\rm{Ker}\>}
\newcommand{\cH}{\mathcal{H}}

\newcommand\restr[2]{{
  \left.\kern-\nulldelimiterspace 
  #1 
  \littletaller 
  \right|_{#2} 
  }}
\newcommand{\littletaller}{\mathchoice{\vphantom{\big|}}{}{}{}}

\def\diag{{\rm diag}}

\newcommand{\IZind}{\mathrm{ind}_{IZ}^G\>}
\newcommand{\KZind}{\mathrm{ind}_{KZ}^G\>}

\newcommand{\IIZind}{\mathrm{ind}_{I(1)Z}^G\>}
\newcommand{\IIZIZind}{\mathrm{ind}_{I(1)Z}^{IZ}\>}
\newcommand{\id}{\mathrm{id}}
\newcommand{\Id}{\mathrm{Id}}

\begin{document}
\newtheorem{theorem}{Theorem}[section] 
\newtheorem{lemma}[theorem]{Lemma} 
\newtheorem{proposition}[theorem]{Proposition} 
\newtheorem{corollary}[theorem]{Corollary} 
\newtheorem{remark}[theorem]{Remark}
\newtheorem{conjecture}[theorem]{Conjecture}
\newtheorem{question}[theorem]{Question}
\newtheorem{example}[theorem]{Example}
\numberwithin{equation}{section}




\title{On the $I(1)$-invariants: Non-abelian Hecke algebra case}
\subjclass{11F70, 20G05, 22E50}
\keywords{Modular representations, Supersingular representations, pro-$p$-Iwahori invariants}

\author[Anand Chitrao]{Anand Chitrao}
\author[Arindam Jana]{Arindam Jana}
\author[Asfak Soneji]{Asfak Soneji}
\address{Department of Mathematical Sciences, UNIST, Ulsan - 44919, Republic of Korea}
\email{anand@unist.ac.kr}
\address{Kerala School of Mathematics, Kozhikode, Kerala - 673571, India}
\email{arindam@ksom.res.in}
\address{Department of Basic Sciences, IITRAM, Ahmedabad - 380026, India}
\email{sonejiasfak96@gmail.com}

\begin{abstract}

    Let $F$ be a finite extension of $\mathbb{Q}_p$. The so-called supersingular representations are the basic building blocks in the theory of mod $p$ representations of ${\rm GL}_2(F)$. 
	The space of pro-$p$-Iwahori invariants of a universal module played a crucial role in the construction of the supersingular representations of ${\rm GL}_2(\mathbb{Q}_p)$. 
	In this paper, 
	we give an explicit description of the pro-$p$-Iwahori invariants of the universal module $\pi_r$ for $r = 0, q - 1$ using the Iwahori-Hecke model.
	We also determine the action of the pro-$p$-Iwahori-Hecke algebra on these
	newly found invariants. 
    As an application, we recover $\pi_r$ functorially from its space of $I(1)$-invariants
    and extend a theorem of Ollivier for any totally ramified extension of $\mathbb{Q}_p$ other than itself.
\end{abstract}

\maketitle


\section{Notation and conventions}
For a prime $p$, let $\F_q=\F_{p^{f}}$ be a degree $f$ extension of the finite field $\F_p$. Let $\mathbb{\overline{F}}_{p}$ denote an algebraic closure of $\F_p$. We fix an embedding $\F_q \hookrightarrow \mathbb{\overline{F}}_{p}$. Let $F$ be a finite extension of $\mathbb{Q}_p$ with ring of integers $\mathcal{O}$, uniformizer $\varpi$, and residue field $\Fq$. Let $G$  be the group ${\rm{GL}_2}(F)$ and $Z$ be its center. Let $K$ be the maximal compact subgroup $\rm{GL_2}(\mathcal{O})$. Let $I$ denote the Iwahori subgroup of upper triangular matrices mod $\varpi$, and $I(1)$ be the pro-$p$-Iwahori subgroup of $K$ consisting of matrices that are upper triangular unipotent mod $\varpi$. For $x \in \mathbb F_q$, we denote its multiplicative representative in $\mathcal O$ by $[x]$. Let
\[
    H := \left\{\left.\begin{pmatrix}[\lambda] & 0 \\ 0 & [\mu]\end{pmatrix} \right\vert \lambda, \mu \in \Fq^\times\right\}.
\]
Set $I_0=\{0\}$, and for any integer $n \geq 1$, let
\[I_n=\left\{[{\mu}_0]+[{\mu}_1]\varpi+\dots+[\mu_{n-1}]{\varpi}^{n-1}\mid
{\mu}_i\in \mathbb{F}_q\right\} \subset \mathcal{O}.\]

If $0\leq m\leq n,$ let $[\cdot]_m:I_n\rightarrow I_m$ be the truncation map defined by
\[\sum\limits_{\substack{i=0}}^{n-1}[\lambda_i]\varpi^i\mapsto \sum\limits_{\substack{i=0}}^{m-1}[\lambda_i]\varpi^i.\]  
Let us denote
\[\alpha=\left(\begin{array}{cc}
1 & 0 \\
0 & \varpi
\end{array}\right),~\beta=\left(\begin{array}{cc}
0      & 1 \\
\varpi & 0
\end{array}\right),~ w=\left(\begin{array}{cc}
0 & 1 \\
1 & 0
\end{array}\right),~ n_s=\left(\begin{array}{cc}
0 & -1 \\
1 & 0
\end{array}\right),\]
and observe that $\beta=\alpha w$ normalizes $I(1)$. For any integer $n \geq 1$, we denote
\begin{align*}
s_n^k &=\sum\limits_{\substack{\mu\in I_n}}\mu_{n-1}^k\left[\left(\begin{array}{cc}
\varpi^n & \mu \\
0    &  1
\end{array}\right),1\right],\\
t_n^k &=\sum\limits_{\substack{\mu\in I_n}}\mu_{n-1}^k\left[\left(\begin{array}{cc}
\varpi^{n-1} & [\mu]_{n-1} \\
0    &  1
\end{array}\right) 
\left(\begin{array}{cc}
1& [\mu_{n-1}] \\
0 & 1
\end{array}\right)w,1\right],
\end{align*} 
where $0\leq k\leq q-1$. By convention, we define $0^0=1$.

For any integer $n \geq 0$ and $\lambda\in I_n$, define
\[g_{n,\lambda}^0=\left(\begin{array}{cc}
\varpi^n & \lambda \\
0 & 1
\end{array}\right)~~\text{and}~~ g_{n,\lambda}^1=\left(\begin{array}{cc}
1             & 0 \\
\varpi\lambda & \varpi^{n+1}
\end{array}\right).\]
We have the relations 
\[g_{0,0}^0={\rm Id},~g_{0,0}^1=\alpha,\ \beta g_{n,\lambda}^0=g_{n,\lambda}^1w.\]
Now $G$ acts transitively on the Bruhat-Tits tree of ${\rm SL}_2(F)$ \cite{Ser03}. The vertices are in a $G$-equivariant bijection with $G/{KZ}$ and the oriented edges are in a $G$-equivariant bijection with $G/{IZ}$. We have the explicit Cartan decomposition given by
\[G=\underset{\substack{i \in \{0, 1\} \\ n \geq 0,~ \lambda \in I_n}}\coprod g_{n,\lambda}^i KZ \] 
and a set of coset representatives of $G/IZ$ is given by
\begin{equation}\label{edges}
\left\{  g_{n,\lambda}^0, g_{n,\lambda}^0 
\left(\begin{array}{cc}
1& \mu \\
0 & 1
\end{array}\right)w, g_{n,\lambda}^1w, g_{n,\lambda}^1w 
\left(\begin{array}{cc}
1& \mu \\
0 & 1
\end{array}\right)w
\right\}_{n \geq 0, \lambda \in I_n, \mu \in I_1}.
\end{equation}

We choose the following orientation of the Bruhat-Tits tree for $\mathrm{SL}_2(F)$:
\begin{center}\label{Bruhat-Tits tree}
    \[
        \begin{tikzpicture}
            \draw[black] (-1, 0) -- (1, 0);
            \draw[black] (1, 0) -- (2, 1);
            \draw[black] (1, 0) -- (2, -1);
            \draw[black] (-2, 1.25) -- (-1, 0);
            \draw[black] (-2, -1) -- (-1, 0);
            \draw[black] (-2, 1.25) -- (-1.75, 2.05);
            \draw[black] (-2, 1.25) -- (-3, 0.65);
            \draw[black] (-2, -1) -- (-3, -0.65);
            \draw[black] (-2, -1) -- (-1.75, -2.05);
            \fill[black] (-1, 0) circle (0.08cm);
            \filldraw[black] (-1, 0) circle node[font = \tiny, anchor=north]{\>\> $\Z_p^2$};
            \fill[black] (1, 0) circle (0.08cm);
            \filldraw[black] (1, 0) circle node[font = \tiny, anchor=north]{\!\!\!\!\!\! $\beta\Z_p^2$};
            \fill[black] (2, 1) circle (0.08cm);
            \filldraw[black] (2, 1) circle node[font = \tiny, anchor=south west]{\!\! $\beta g^0_{1, 0}\Z_p^2$};
            \fill[black] (2, -1) circle (0.08cm);
            \filldraw[black] (2, -1) circle node[font = \tiny, anchor=north]{\qquad \qquad $\beta g^0_{1, 1}\Z_p^2$};
            \fill[black] (-2, 1.25) circle (0.08cm);
            \filldraw[black] (-2, 1.25) circle node[font = \tiny, anchor=west]{$g^0_{1, 0}\Z_p^2$};
            \fill[black] (-2, -1) circle (0.08cm);
            \filldraw[black] (-2, -1) circle node[font = \tiny, anchor=west]{$g^0_{1, 1}\Z_p^2$};
            \fill[black] (-1.75, 2.05) circle (0.08cm);
            \filldraw[black] (-1.75, 2.05) circle node[font = \tiny, anchor=west]{$g^0_{2, 0}\Z_p^2$};
            \fill[black] (-3, 0.65) circle (0.08cm);
            \filldraw[black] (-3, 0.65) circle node[font = \tiny, anchor=north west]{$g^0_{2, p}\Z_p^2$};
            \fill[black] (-3, -0.65) circle (0.08cm);
            \filldraw[black] (-3, -0.65) circle node[font = \tiny, anchor=south west]{$g^0_{2, 1}\Z_p^2$};
            \fill[black] (-1.75, -2.05) circle (0.08cm);
            \filldraw[black] (-1.75, -2.05) circle node[font = \tiny, anchor=west]{$g^0_{2, 1 + p}\Z_p^2$};
        \end{tikzpicture}
        \]
    Case $q = 2$.
    \end{center}
    So, the phrase ``left side of the tree'' refers to the vertices associated with the lattices $g^0_{n, \mu}\Z_p^2$ for $n \geq 1$ and $\mu \in I_n$. The vertex associated with the lattice $\Z_p^2$ is called the ``root vertex''. The phrase ``right side of the tree'' means the complement of the union of the left side of the tree and the root vertex.
    
Let $B(m)$ denote the ball of radius $m$ in the tree of ${\rm SL}_2(F)$ with center at the root vertex. Explicitly, it consists of linear combinations of vectors in the sets
\[B^0(m) = \left\{[g_{n,\mu}^0,1], \left[g_{n-1,[\mu]_{n-1}}^0 
\left(\begin{array}{cc}
1& [\mu_{n-1}] \\
0 & 1
\end{array}\right)w,1\right]= [g_{n,\mu}^0 \beta, 1] \right\}_{n \leq m, \mu \in I_n},\]
and
\[B^1(m) =\left\{[g_{n-1,[\mu]_{n-1}}^1w,1], \left[g_{n-2,[\mu]_{n-2}}^1w 
\left(\begin{array}{cc}
1& [\mu_{n-2}] \\
0 & 1
\end{array}\right)w,1\right] = [g_{n-1,[\mu]_{n-1}}^1w \beta, 1]\right\}_{n \leq m, \mu \in I_n}.\]
We say that an edge is in $B(n)$ for some $n \geq 0$ if its characteristic function is in $B(n)$.

Let $\cH$ denote the pro-$p$-Iwahori-Hecke algebra: $\cH = \End_{G}(\mathrm{ind}_{I(1)Z}^{G}\>\mathbbm{1})$. A description of $\cH$ can be found, e.g., in \cite[Chapter 2]{Pas04} or \cite{Vig04}. We will be working with representations having the trivial central character. The Hecke algebra $\cH$ is generated by the operators $T_{\beta}, T_{n_s}$, and $e_{\chi} = |H|^{-1}\sum_{h \in H}\chi(h)T_h$, where $\chi$ runs through all characters $H \to \overline{\F}_p^{\times}$. We denote the character $\chi\begin{pmatrix}[\lambda] & 0 \\ 0 & [\mu]\end{pmatrix} = \lambda^r \mu^s$ of $H$ by $a^rd^s$. If $\chi = a^r d^s$, then let $\chi^{w} = a^s d^r$.

\section{Introduction}
The study of mod $p$ representations of $G$ began with the pioneering works of Barthel and Livn\'e in \cite{BL94, BL95}. The authors classified the smooth irreducible representations of $G$ with a central character over characteristic $p$ fields into four disjoint families: characters, twists of the Steinberg representation, principal series representations, and supersingular representations. They further showed that all of these representations can be realized as quotients of the compactly induced representation $\KZind \sigma$  where $\sigma$ is a smooth irreducible representation of $K$ with the action extended to $KZ$ by declaring that $\varpi \in Z$ acts trivially. 

The first three were realized as explicit quotients of $\KZind \sigma_r / {\im} (T - \lambda)$ for some $0 \leq r \leq q - 1$ and $\lambda \in \overline{\F}_p^{\times}$. Here $T$ is the standard spherical Hecke operator defined in \cite[Section 3.1]{BL94}.

The supersingular representations, however, were only realized as non-explicit irreducible quotients of the universal module
\[
    \pi_r = \frac{\KZind \sigma_r}{{\im} T}.
\]
The finding of an explicit supersingular quotient seems to be a much harder question, although some abstract constructions are available in \cite{BP12, Le19, GS20, She22, GLS23, Sch23}.
The first progress towards the explicit understanding of the supersingular representations was made by Breuil in \cite{Bre03}, where he showed that the representations $\pi_r$ are themselves irreducible when $F = \Q_p$. This was done by showing that the space of $I(1)$-invariants in $\pi_r$ is two-dimensional. We note in passing that the subspace of $\pi_r$ invariant under the principal congruence subgroups can be found in \cite{Mor13} and \cite{AB13}. For totally ramified $F/ \mathbb{Q}_p$, an irreducibility criterion for certain quotients of $\KZind \sigma$ is given in \cite{Sch11}.

The space of $I(1)$-invariants of $\pi_r$ was determined by Schein in \cite{Sch14} for $F/\Q_p$ totally ramified. This result was extended by Hendel in \cite{Hen19} to include all $F/\Q_p$ and all $r$ for which the $p$-adic digits satisfy $2 < r_j < p - 3$. In \cite{Jan23}, the second author then extended Hendel's result to the range $1 < r_j < p - 1$ when $F/\Q_p$ is not totally ramified. This uses the Iwahori-Hecke model for $\pi_r$ first introduced by Anandavardhanan and Borisagar in \cite{AB15} for $F = \Q_p$ and $r \neq 0, p - 1$. The Iwahori-Hecke model was extended to arbitrary $F/\Q_p$ by Anandavardhanan and the second author in \cite{AJ21}, where they used it to study a further quotient of the universal module $\pi_r$ when the $p$-adic digits of $r$ satisfy $0 < r_j < p-1$ for $F/ \mathbb{Q}_p$ not totally ramified and $2 < r < p-3$ for $F/ \mathbb{Q}_p$ totally ramified. Note that in  \cite{AJ21, Jan23} the Iwahori-Hecke algebra was commutative.

However when $r = 0, q - 1$, the Iwahori-Hecke algebra ${\rm End}_G(\IZind \mathbbm{1})$ becomes non-commutative. Indeed, we have 
\[
  {\rm End}_G(\IZind \mathbbm{1}) \simeq \dfrac{\overline{\F}_p[T_{1, 0}, T_{1, 2}]}{(T_{1,0}^2-\Id,\  T_{1, 2} T_{1, 0} T_{1, 2} + T_{1, 2})},
\]
where $T_{1, 0}$ and $T_{1, 2}$ denote the Iwahori-Hecke operators.

Therefore the computation of $I(1)$-invariants of $\pi_r$ for $r=0, q-1$ needs a slightly different analysis which is carried out in Sections \ref{preli-lemmas} and \ref{pro-p-invarints}. An Iwahori-Hecke model was obtained in \cite[Theorem 3.13]{Chi25} which was crucial in computing the reduction of certain $p$-adic Galois representations of the absolute Galois group of $\mathbb{Q}_p$ \cite{CG24, CG26}. For arbitrary local fields $F$ an Iwahori-Hecke model of $\pi_r$ with $r = 0, q-1$ was obtained in \cite[Theorem 3]{CS25} using results from \cite{GJ25}. Using this model, we give an explicit description of the subspace of $I(1)$-invariants of $\pi_r$ for $r = 0, q-1$.

Next we show an application of the $I(1)$-invariant space obtained in Theorem \ref{pro-p-iwahori-invariants-intro}. In \cite{Oll09}, it is shown that the category of $\overline{\F}_p$-representations of $G$ generated by the $I(1)$-invariants is equivalent to that of the simple $\cH$-modules for $F = \mathbb{Q}_p$, but not for $F/\mathbb{Q}_p$ with residue cardinality $q>p$. In fact for such $F$, there exists a simple supersingular $\cH$-module $M$ for which $( M^{I(1)} \otimes_{\cH} \IIZind \mathbbm{1})^{I(1)}$ is infinite dimensional \cite[Theorem 1.2 (b)]{Oll09}. The subspace of $I(1)$-invariants of $\pi_r$ described in Theorem \ref{pro-p-iwahori-invariants-intro} will be very handy to check this statement by taking $M = \langle [\id, 1], [\beta, 1] \rangle \cH$, and it further extends \cite[Theorem 1.2 (b)]{Oll09} for $F/\mathbb{Q}_p$ totally ramified and $F \neq \mathbb{Q}_p$. Finally we recover $\pi_0$ as a direct summand of the representation obtained by applying the reverse functor studied by Vign\'eras, Ollivier, and others to $\pi_0^{I(1)}$. 

These functors have been studied by others for groups other than ${\rm GL}_2$. In particular, Koziol has shown that the functor of $I(1)$-invariants induces an equivalence of categories for ${\rm SL}_2(\mathbb{Q}_p)$ with $p>2$ \cite[Corollary 5.3]{Koz16}.

Now we state the theorems of this paper. Assume, for the finite extension $F$ of $\mathbb{Q}_p$ that $ef>1$ and $q>3$. Note that the cases $q=2, 3$ were also excluded from the computation of $I(1)$-invariants in \cite{Hen19, AJ21, Jan23} (see the discussion before Remark \ref{q = 2, 3}).

\begin{theorem}\label{pro-p-iwahori-invariants-intro}
 Let 
    \[ 
        \tau := \frac{\IZind \mathbbm{1}}{({\im} T_{1,2} , {\Ker} T_{1,2})}
    \] 
    be the Iwahori-Hecke model of the universal supersingular representation $\pi_0$ of $G$. Then a basis for the set of $I(1)$-invariants of $\tau$ is
    \[
        \left\{[\id, 1], [\beta, 1], s_n^{p^l}, \beta s_n^{p^l}\right\}_{\substack{n \geq 2 \\ 0 \leq l \leq f - 1}}.
    \] 
\end{theorem}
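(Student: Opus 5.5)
The plan is to realize $\tau$ concretely enough to compute inside it, and then run the usual ``top-coefficient'' argument of \cite{Bre03, Hen19, Jan23}, adapted to the non-commutative Iwahori--Hecke algebra. By \cite[Theorem 3]{CS25}, $\tau\cong\pi_0$. The first step is to describe $\IZind\mathbbm{1}$ via the coset representatives \eqref{edges}, i.e.\ via oriented edges of the tree, and to write $T_{1,2}$ and $T_{1,0}$ explicitly on the basis vectors $[g,1]$. Here $T_{1,0}$ is the involution reversing the edge $[g,1]\mapsto[g\beta,1]$, and $T_{1,2}$ sends an edge to a sum over the $q$ adjacent edges. Using the relation $T_{1,2}T_{1,0}T_{1,2}=-T_{1,2}$, one checks that $\Ker T_{1,2}$ and $\im T_{1,2}$ are spanned by explicit local combinations. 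This gives a normal form: every class in $\tau$ has a unique representative supported on a prescribed set of edges, say the edges pointing away from the root, with a fixed digit-normalization. I would take this normal form as the working model and verify it ball by ball, using the filtration $B(m)$.

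Next, I check that the listed vectors are $I(1)$-invariant and linearly independent. For $[\id,1]$ this is immediate. For $[\beta,1]$ it follows because $\beta$ normalizes $I(1)$. For $s_n^{p^l}$, I act by generators of $I(1)$: the upper unipotents $\begin{pmatrix}1&x\\0&1\end{pmatrix}$, the lower ones $\begin{pmatrix}1&0\\ \varpi y&1\end{pmatrix}$, and $H\cap I(1)$-type diagonal elements. An upper unipotent permutes the $\mu\in I_n$ by addition with carries. Since $F$ is ramified or $q$ is a prime power, the top digit changes by a polynomial in the lower digits. Because $k=p^l$ makes $\mu_{n-1}^{p^l}$ additive (Frobenius), the difference $u s_n^{p^l}-s_n^{p^l}$ is a combination of the vectors $s_n^0$ and $t_n$-type terms, which lie in $\im T_{1,2}+\Ker T_{1,2}$. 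The lower unipotents fix the relevant vertices modulo elements that are killed by the same relations. Applying $\beta$ gives the invariance of $\beta s_n^{p^l}$. Linear independence follows from the normal form, since these vectors are supported on distinct spheres and sides of the tree.

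The main work is the converse, completeness. Take an invariant $v\in\tau$ and write its normal-form representative, with maximal radius $m$ on the left side; the right side is handled by symmetry under $\beta$. Invariance under the upper unipotents of $I(1)$ forces the top-level coefficients of $v$ to be functions of $\mu$ satisfying a difference equation in the top digit $\mu_{m-1}$. Invariance under $\begin{pmatrix}1+\varpi a&0\\0&1\end{pmatrix}$ and under lower unipotents then forces this function to be a polynomial in $\mu_{m-1}$ alone that is additive, so it is a combination of the $\mu_{m-1}^{p^l}$, plus possibly a constant term, which must be removed using the $\im T_{1,2}$ relations. Subtracting the corresponding combination of $s_m^{p^l}$ lowers $m$, and induction ends at $B(1)$, where a direct computation leaves only $[\id,1]$ and $[\beta,1]$.

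The main obstacle I expect is the carry analysis. When $e>1$, addition in $I_m$ with multiplicative representatives produces carries from lower digits into $\mu_{m-1}$ through Witt-vector polynomials. I must show that these carries contribute only terms absorbed by the relations, and that exactly the exponents $p^l$ survive. The hypothesis $q>3$ is needed here to kill the low-degree exceptional polynomials. I would first treat the totally ramified case, where the carries are simplest, and then adapt Hendel's estimates from \cite{Hen19}.
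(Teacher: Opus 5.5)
Your overall architecture matches the paper's: check invariance, check independence, then induct on the radius by identifying the top coefficients. The gap is in the central completeness step, where you invoke the wrong elements of $I(1)$.

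\textbf{The completeness step.} The elements $\left(\begin{smallmatrix}1+\varpi a&0\\0&1\end{smallmatrix}\right)$ and $\left(\begin{smallmatrix}1&0\\ \varpi c&1\end{smallmatrix}\right)$ send $g^0_{n,\mu}$ to $g^0_{n,\mu'}$ modulo $I$, where $\mu'_0=\mu_0$ and $\mu'_j=\mu_j+P_j(\mu_0,\dots,\mu_{j-1})$. So they fix every vertex $g^0_{1,[\mu_0]}KZ$ and act triangularly on the digits. They therefore cannot force the top coefficient to be independent of $\mu_0,\dots,\mu_{m-2}$.

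Here is an explicit example. Let $q\geq 5$ and $v=\sum_{\mu\in I_2}\mu_0\,\mu_1^{p^l}[g^0_{2,\mu},1]$. The following elements change $v$ by a multiple of $\sum_{\mu\in I_2}\mu_0^{k}[g^0_{2,\mu},1]$:
\begin{itemize}
\item $\left(\begin{smallmatrix}1&b\\0&1\end{smallmatrix}\right)$ with $b\in\varpi\mathcal{O}$, with $k=1$;
\item the diagonal elements of $I(1)$, with $k=1+p^l$;
\item the lower unipotents of $I(1)$, with $k=1+2p^l$.
\end{itemize}
Here exponents are read as functions on $\Fq$. None of these exponents equals $q-1$, so all three differences vanish in $\tau$ by Lemma~\ref{snk like terms}. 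Yet by Lemma~\ref{functions are 0}, $v$ is not congruent to a combination of the $s_2^{p^{l'}}$ modulo $B(1)$.

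The dependence on the lower digits is removed instead by the upper unipotents $\left(\begin{smallmatrix}1&[\lambda]\varpi^{j}\\0&1\end{smallmatrix}\right)$ with $j<m-1$, which translate $\mu_j$. This is the paper's Lemma~\ref{independence wrt smaller mus}. Work downward from $j=m-2$. Then the carries into the intermediate digits are harmless, and the carry into $\mu_{m-1}$ only adds a term constant in $\mu_{m-1}$. Lemma~\ref{functions are 0} then forces the coefficient of each $\mu_{m-1}^{p^l}$ to be invariant under $\mu_j\mapsto\mu_j+\lambda$.

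Use every $\lambda\in\Fq$ here and in the top-digit step. Invariance under $\mu_j\mapsto\mu_j+1$ alone does not force constancy when $f>1$; for example, $\mu_j^p-\mu_j$ is invariant. On the right side the roles are exchanged: $\beta^{-1}\left(\begin{smallmatrix}1&0\\ \varpi x&1\end{smallmatrix}\right)\beta=\left(\begin{smallmatrix}1&x\\0&1\end{smallmatrix}\right)$. This identity is what actually makes your ``symmetry under $\beta$'' work.

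\textbf{Smaller points.} Your independence argument (``distinct spheres and sides'') does not separate the vectors $s_n^{p^l}$ for different $l$ at the same $n$, because they live on the same edges. The paper separates them by their pairwise distinct $I$-eigencharacters $(d/a)^{p^l}$ (Lemma~\ref{eigenvalues of sn}).

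For invariance, you do not need carry estimates for every $n$. We have $s_n^k=\sum_{\mu_0\in\Fq}g^0_{1,[\mu_0]}\cdot s_{n-1}^k$. By \eqref{Arindam's identity stolen}, $g\,g^0_{1,[\mu_0]}\in g^0_{1,[\mu_0+b_0]}I(1)$ for $g\in I(1)$. So invariance passes from $s_{n-1}^k$ to $s_n^k$ (Lemma~\ref{inductive-step}), and only $s_2^{p^l}$ has to be checked.

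At $n=2$ the hypotheses enter at precise places that your sketch blurs:
\begin{itemize}
\item $ef>1$ is used for the upper unipotents: when $e=1$, the condition $f>1$ ensures the carry has no $\mu_0^{q-1}$-component.
\item $q>3$ is used for the lower unipotents. These do not fix $s_2^{p^l}$ outright; they shift $\mu_1$ by $c_0\mu_0^2$. This leaves $c_0^{p^l}\sum_{\mu}\mu_0^{2p^l}[g^0_{2,\mu},1]$, which vanishes only because $2p^l\neq q-1$.
\end{itemize}
Finally, $H\cap I(1)$ is trivial. The diagonal elements you need to check are $\left(\begin{smallmatrix}1+\varpi a&0\\0&1\end{smallmatrix}\right)$.
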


As an application, we show that $\pi_0$ is indecomposable, which follows from the following theorem.

 \begin{theorem}\label{endo-algebra-intro}
        The endomorphism algebra $\End_G(\pi_0) = \overline{\F}_p$.
 \end{theorem}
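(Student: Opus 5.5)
We use the standard principle that a $G$-endomorphism of a representation generated by a vector $v$ is determined by the image of $v$, and that image must satisfy every invariance property of $v$. Since $\pi_0 \simeq \tau$ by the Iwahori-Hecke model \cite[Theorem 3]{CS25}, we work in $\tau$. The module $\tau$ is a quotient of $\IZind \mathbbm{1}$, so it is generated as a $G$-representation by the image of $[\id,1]$. Hence any $\phi \in \End_G(\tau)$ is determined by $v := \phi([\id,1])$. This vector is fixed by $IZ$, and in particular by $I(1)$. Moreover, for every Iwahori-Hecke operator $T$ acting on the generator, $\phi$ intertwines the action: $\phi$ kills everything killed by the quotient relations $\im T_{1,2}$ and $\Ker T_{1,2}$. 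So $v$ corresponds to a $G$-map $\IZind \mathbbm{1} \to \tau$ that factors through $\tau$. By Frobenius reciprocity, this means $v \in \tau^{IZ}$ subject to the compatibility conditions.

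The first step is therefore to compute $\tau^{IZ}$ from Theorem \ref{pro-p-iwahori-invariants-intro}. The space $\tau^{IZ}$ is the subspace of $\tau^{I(1)}$ on which $H$ acts trivially. We compute the $H$-action on each basis vector:
\begin{itemize}
\item $[\id,1]$ and $[\beta,1]$ are fixed by $H$, since $\beta$ normalizes $H$ up to swapping $a$ and $d$, and the character is trivial.
\item For $h=\diag([\lambda],[\mu])$, a change of variables $\mu' = \lambda\mu^{-1}\mu$ in the sum gives $h\, s_n^{k} = (\lambda^{-1}\mu)^{k} s_n^{k}$, up to a sign convention. Thus $s_n^{p^l}$ transforms by the character $(a^{-1}d)^{p^l}$, which is nontrivial for $q>2$. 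Similarly $\beta s_n^{p^l}$ transforms by the $w$-conjugate character, also nontrivial.
\end{itemize}
Since these characters are nontrivial, none of the vectors $s_n^{p^l}$, $\beta s_n^{p^l}$ contributes $H$-invariants. Hence $\tau^{IZ} = \langle [\id,1], [\beta,1]\rangle$, which is two-dimensional. One should check that the basis elements are $H$-eigenvectors so that the invariants are exactly spanned by eigenvectors with trivial eigencharacter; the formula above confirms this.

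Now write $v = a[\id,1] + b[\beta,1]$. The second step is to impose the condition that $\phi$ commutes with the Hecke relations, in particular with $T_{1,2}$. The generator $[\id,1]$ is annihilated in $\tau$ by applying $T_{1,2}$ after composing with elements of $\Ker T_{1,2}$ and $\im T_{1,2}$. Concretely, the relation $T_{1,2}([\id,1]) = 0$ holds in $\tau$ because $T_{1,2}[\id,1] \in \im T_{1,2}$. Applying $\phi$ forces $T_{1,2}$, as computed via the $G$-action on $v$, to vanish. Explicitly, $T_{1,2}$ corresponds to the double coset sum $\sum_{\mu\in I_1} g\,[\ldots]$, which acts by $v \mapsto \sum_{\mu}\begin{pmatrix}1&[\mu]\\0&1\end{pmatrix} w\, v$ (or its $\alpha$-variant). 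One also uses $T_{1,0}$, which is essentially action by $\beta$ with $T_{1,0}^2=\Id$.

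Computing these on $[\id,1]$ and $[\beta,1]$ gives $T_{1,0}[\id,1] = [\beta,1]$. The element $[\id,1]$ is sent by $\phi$ to $v$, so $\phi([\beta,1]) = \beta v = a[\beta,1] + b[\id,1]$. The relation coming from $T_{1,2}$ (the image of $[\id,1]$ under $T_{1,2}$ involves a sum over the $q$ neighbouring edges, and one over the $q$ edges on the other side) will give a linear condition of the form $b\cdot(\text{nonzero vector}) = 0$ in $\tau$. Hence $b=0$, and $\phi = a\cdot\Id$.

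The main obstacle is this last step: showing that $\phi([\id,1]) = [\beta,1]$ does not extend to an endomorphism. Put differently, we must show that the map $[\id,1]\mapsto[\beta,1]$ violates a defining relation of $\tau$. I would test this against $T_{1,2}[\id,1]=0$, using the explicit nonvanishing in $\tau$ of the relevant ball-$B(1)$ sums; this nonvanishing follows from the basis description in the proof of Theorem \ref{pro-p-iwahori-invariants-intro}. Concluding, $\End_G(\pi_0)=\overline{\F}_p$, so $\pi_0$ is indecomposable.
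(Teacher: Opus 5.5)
Your strategy is the paper's. First, Theorem \ref{pro-p-iwahori-invariants-intro} together with the $H$-eigencharacters $(d/a)^{p^l}$ and $(a/d)^{p^l}$ (nontrivial once $q>2$) forces $\phi([\id,1])=a[\id,1]+b[\beta,1]$. Then a relation satisfied by $[\id,1]$ in $\tau$ should kill $b$. The first step is correct, and more carefully justified than in the paper.

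However, the proof is incomplete exactly where you flag ``the main obstacle'', and that step is the whole content of the paper's argument. You assert that the relation yields $b\cdot(\text{nonzero vector})=0$, but you never compute the vector or show it is nonzero. Three specific problems:
\begin{itemize}
\item The element $\sum_\mu\begin{pmatrix}1&[\mu]\\0&1\end{pmatrix}w$ you write down is not how $T_{1,2}$ acts on the generator.
\item The relevant vector is not a $B(1)$-sum readable off from the basis of Theorem \ref{pro-p-iwahori-invariants-intro}. It is supported in $B(2)$, on the right side of the tree.
\item That vector must first be reduced modulo $(\mathrm{Im}\,T_{1,2},\mathrm{Ker}\,T_{1,2})$ before nonvanishing can be checked.
\end{itemize}

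Here is the missing computation. Put $g_\lambda=\begin{pmatrix}1&0\\ \varpi\lambda&\varpi\end{pmatrix}$ for $\lambda\in I_1$.
\begin{itemize}
\item $T_{1,2}[\id,1]=\sum_{\lambda}g_\lambda\cdot[\id,1]$ lies in $\mathrm{Im}\,T_{1,2}$, so it is $0$ in $\tau$. By $G$-equivariance, $\sum_\lambda g_\lambda\cdot v=0$.
\item The $a$-part of this sum is $a\,T_{1,2}[\id,1]=0$.
\item Since $g_\lambda\beta=\beta g^0_{1,\lambda}$, the $b$-part is $b\,\beta s_1^0$.
\item By Lemma \ref{info about sn0}(i), which comes from Lemma \ref{L1}(ii), $s_1^0\equiv-[\beta,1]$. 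Hence $b\,\beta s_1^0\equiv-b[\beta^2,1]=-b[\id,1]$, using that $\beta^2=\varpi$ is central.
\item Since $[\id,1]\neq 0$ in $\tau$, this gives $b=0$.
\end{itemize}
The paper does the same thing with the $\alpha^{-1}$-translate $\sum_\lambda\begin{pmatrix}1&0\\ \lambda&1\end{pmatrix}$, and obtains $-b[w\beta,1]$.

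Incidentally, your element equals $\beta\sum_\lambda g_\lambda$ modulo the centre. So it does annihilate $[\id,1]$, sending it to $T_{1,2}[\beta,1]$, and it sends $[\beta,1]$ to $s_1^0\equiv-[\beta,1]$. This is the table's $[\id,1]\cdot T_{n_s}=0$ and $[\beta,1]\cdot T_{n_s}=-[\beta,1]$, and it would also close the argument — but only after this reduction is actually carried out.
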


We compute the explicit action of the pro-$p$-Iwahori-Hecke algebra $\mathcal{H}$ on the space of $I(1)$-invariants $\pi_0^{I(1)}$ in $\pi_0$ (see \S\ref{action-on-I(1)-invariants}). These actions are essential in showing that $\pi_0$ can be realized as a direct summand of the representation $\pi_0^{I(1)} \otimes_{\cH} \IIZind \mathbbm{1}$. Indeed, we have
\begin{theorem}\label{summand-in-reverse-vigneras-functor-intro}
   Let $\pi_0$ denote the universal supersingular representation of $G$. Then there exists a representation $M$ of $G$ such that
   \[
   \pi_0^{I(1)} \otimes_{\cH} \IIZind \mathbbm{1} \simeq \pi_0 \oplus M.
   \]
\end{theorem}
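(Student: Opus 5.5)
The plan is to produce $\pi_0$ as a direct summand via the adjunction between $I(1)$-invariants and $-\otimes_{\cH}\IIZind \mathbbm{1}$. Write $\mathcal{M} = \pi_0^{I(1)}$, a right $\cH$-module.

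\textbf{Step 1: the counit.} Since $\mathcal{M}$ generates $\pi_0$ (it contains the image of $[\id,1]$, which generates $\tau\simeq\pi_0$), the natural counit
\[
\epsilon:\mathcal{M}\otimes_{\cH}\IIZind \mathbbm{1}\to \pi_0,\qquad v\otimes f\mapsto f\cdot v,
\]
is $G$-equivariant and surjective. The aim is then to construct a $G$-equivariant section $s:\pi_0\to \mathcal{M}\otimes_{\cH}\IIZind \mathbbm{1}$. With such an $s$ we obtain the decomposition with $M=\Ker\epsilon$.

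\textbf{Step 2: decompose $\mathcal{M}$ as an $\cH$-module.} From Theorem \ref{pro-p-iwahori-invariants-intro}, $\mathcal{M}$ has basis $[\id,1],[\beta,1],s_n^{p^l},\beta s_n^{p^l}$. I would use the explicit formulas of \S\ref{action-on-I(1)-invariants} to show the following:
\begin{itemize}
\item $M_0:=\langle[\id,1],[\beta,1]\rangle$ is an $\cH$-submodule. It is a simple supersingular module: $T_\beta$ swaps the two vectors, $T_{n_s}$ acts by $-1$ or $0$ according to the character, and $e_\chi$ projects to the trivial character.
\item The span $M_1$ of the $s_n^{p^l},\beta s_n^{p^l}$ is a complementary $\cH$-submodule.
\end{itemize}
Concretely, I would check that $T_{n_s}$ and $T_\beta$ applied to $s_n^{p^l}$ stay inside $M_1$, using $\beta s_n^k$ and the relation $T_{n_s}=\sum_{\mu}\begin{pmatrix}1&[\mu]\\0&1\end{pmatrix}n_s$ together with the vanishing relations in $\tau$. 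This gives
\[
\mathcal{M}\otimes_{\cH}\IIZind \mathbbm{1}\simeq (M_0\otimes_{\cH}\IIZind \mathbbm{1})\oplus(M_1\otimes_{\cH}\IIZind \mathbbm{1}).
\]

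\textbf{Step 3: identify the first summand with $\pi_0$.} The module $M_0$ is the supersingular character module. Hence $M_0\otimes_{\cH}\IIZind \mathbbm{1}$ is a quotient of $\IIZind\mathbbm{1}$ by the images of $T-\lambda(T)$ for generators $T$ of $\cH$. Modding out the $e_\chi$ for $\chi\neq1$ gives $\IZind\mathbbm{1}$. Imposing the $T_{n_s}$ relation (which corresponds to $T_{1,0}$, resp. $T_{1,2}$) and the $T_\beta$ relation then cuts out exactly $\IZind\mathbbm{1}/(\im T_{1,2},\Ker T_{1,2})=\tau\simeq\pi_0$, by the Iwahori-Hecke model of \cite[Theorem 3]{CS25}. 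Under this identification, $\epsilon$ restricted to this summand is the identity. The summand $M_0\otimes_{\cH}\IIZind \mathbbm{1}$ therefore maps isomorphically onto $\pi_0$, and we take $M:=M_1\otimes_{\cH}\IIZind \mathbbm{1}$.

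\textbf{Main obstacle.} I expect the hardest part to be Step 2, specifically showing that $M_1$ is stable under $\cH$. The Hecke operators applied to $s_n^{p^l}$ may produce components along $[\id,1],[\beta,1]$, for instance $T_{n_s}s_2^{p^l}$ landing on edges at distance one. If this happens, I would instead seek a modified complement. One option is to replace $s_n^{p^l}$ by $s_n^{p^l}+c[\id,1]+c'[\beta,1]$. Alternatively, I would argue via idempotents: $e_{\mathbbm{1}}$ and the operator $T_{n_s}+1$ give a projection onto the part where $M_0$ lives, and checking $\mathrm{Ext}^1_{\cH}$-splitting by showing that $M_0$ is a direct summand, i.e. that the extension class vanishes, using the explicit action computed in \S\ref{action-on-I(1)-invariants}.

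The identification in Step 3 is a second delicate point, since the precise kernel relations must match $\im T_{1,2}$ and $\Ker T_{1,2}$. I would verify it by comparing generators, using that $\Ker T_{1,2}=\im(T_{1,2}T_{1,0}+1)$-type relations coming from the quadratic relation in ${\rm End}_G(\IZind\mathbbm{1})$.
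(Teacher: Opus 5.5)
Your architecture is the paper's: split off $M_0=\langle[\id,1],[\beta,1]\rangle=[\id,1]\cH$ from the span $M_1$ of the $s_n^{p^l},\beta s_n^{p^l}$, distribute the tensor product, and identify $M_0\otimes_{\cH}\IIZind\mathbbm{1}$ with $\tau$ by passing through $\IZind\mathbbm{1}$. The counit of Step 1 is not needed for this.

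Step 2, which you flag as the main obstacle, is in fact immediate. Since $\mathbbm{1}^w=\mathbbm{1}$, the relations $T_{n_s}e_\chi=e_{\chi^w}T_{n_s}$ and $T_\beta e_\chi=e_{\chi^w}T_\beta$ show that $e_{\mathbbm{1}}$ commutes with $T_\beta$ and $T_{n_s}$. So $\mathcal{M}=\mathcal{M}e_{\mathbbm{1}}\oplus\mathcal{M}(1-e_{\mathbbm{1}})$ is a decomposition of $\cH$-modules. By Lemma \ref{eigenvalues of sn} and its conjugate by $\beta$, the two pieces are exactly $M_0$ and $M_1$, because the characters $(d/a)^{\pm p^l}$ are nontrivial. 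In particular $s_2^{p^l}\cdot T_{n_s}$ can have no component along $[\id,1],[\beta,1]$ (it is actually $0$), and no modified complement is needed.

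The genuine gap is Step 3. $M_0$ is not a character of $\cH$. It is the $2$-dimensional simple supersingular module: $T_\beta$ swaps the two basis vectors, $[\id,1]\cdot T_{n_s}=0$ and $[\beta,1]\cdot T_{n_s}=-[\beta,1]$. So there is no scalar $\lambda(T)$, and ``the quotient by the images of $T-\lambda(T)$'' has no meaning here.

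What you need is a presentation $M_0\simeq\cH/J$, where $J$ is the right ideal annihilating $[\id,1]$, so that $M_0\otimes_{\cH}\IIZind\mathbbm{1}\simeq\IIZind\mathbbm{1}/J\cdot\IIZind\mathbbm{1}$. You also need explicit generators for $J$. The paper supplies this in Lemma \ref{Annihilator lemma}: $J$ is generated by $T_{n_s}$, $T_\beta(T_{n_s}+1)$ and the $e_\chi$ with $\chi\neq\mathbbm{1}$. The proof reduces any element of $\cH$ modulo these generators to $a+bT_\beta$.

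Lemma \ref{Comparison theorem} then sends $T_\beta\mapsto T_{1,0}$ and $T_{n_s}\mapsto T_{1,2}T_{1,0}$, not to $T_{1,0}$ or $T_{1,2}$ separately. Hence ${\im} T_{n_s}$ becomes ${\im} T_{1,2}$, and ${\im} T_\beta(T_{n_s}+1)$ becomes ${\im}(T_{1,0}+T_{-1,0})={\Ker} T_{1,2}$ by \eqref{T-10 in the non-commutative case} and Lemma \ref{L1}(ii).

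Your tentative identification ${\Ker} T_{1,2}={\im}(T_{1,2}T_{1,0}+1)$ is off by a factor $T_{1,0}$, and this is fatal rather than cosmetic. The element $x=-T_{1,2}T_{1,0}$ is an idempotent with ${\im} x={\im} T_{1,2}$. Therefore ${\im} T_{1,2}+{\im}(1+T_{1,2}T_{1,0})$ is all of $\IZind\mathbbm{1}$, and imposing that relation would give $0$ rather than $\tau$. The left factor $T_\beta$ in $T_\beta(T_{n_s}+1)$ reflects that $[\id,1]\cdot T_\beta=[\beta,1]$ is the vector killed by $T_{n_s}+1$. It is precisely what makes the quotient come out to $\tau\simeq\pi_0$.
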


\section{Preliminary Lemmas}\label{preli-lemmas}
In this section, we recall some generalities regarding the Hecke operators. It also consists of some basic results which will be crucial in proving Theorems \ref{pro-p-iwahori-invariants-intro} and \ref{endo-algebra-intro}. We begin with the following elementary lemma.
\begin{lemma}\label{Every fun is a poly}
    Every function $f: \Fq^n \to \Fq$ comes from a polynomial in $n$ variables with degree in each variable less than or equal to $q - 1$.
\end{lemma}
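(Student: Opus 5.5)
The plan is to use Lagrange interpolation, together with a dimension count as a cross-check. First I treat the case $n = 1$. For each $a \in \Fq$, consider the polynomial
\[
\delta_a(x) = 1 - (x - a)^{q-1}.
\]
Since $y^{q-1} = 1$ for every $y \in \Fq^\times$ and $0^{q-1} = 0$, we get $\delta_a(a) = 1$ and $\delta_a(b) = 0$ for $b \neq a$. Moreover $\deg \delta_a \leq q - 1$. Hence a function $f : \Fq \to \Fq$ is the function of the polynomial $\sum_{a \in \Fq} f(a)\,\delta_a(x)$, which has degree at most $q-1$.

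For general $n$, I take products. For $a = (a_1, \dots, a_n) \in \Fq^n$, set
\[
\delta_a(x_1, \dots, x_n) = \prod_{i=1}^n \bigl(1 - (x_i - a_i)^{q-1}\bigr).
\]
This is the indicator function of the point $a$, and its degree in each variable $x_i$ is at most $q - 1$. Then
\[
P(x_1, \dots, x_n) = \sum_{a \in \Fq^n} f(a)\,\delta_a(x_1, \dots, x_n)
\]
agrees with $f$ at every point of $\Fq^n$. Expanding the product does not raise the degree in any individual variable, so $P$ has the required degree bounds.

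As a cross-check, the statement also follows from counting. The space of polynomials with degree at most $q-1$ in each variable has dimension $q^n$ over $\Fq$, which equals the dimension of the space of all functions $\Fq^n \to \Fq$. So it suffices to show that the evaluation map from these polynomials to functions is injective. One can prove this by induction on $n$: a nonzero polynomial in one variable of degree at most $q - 1$ has fewer than $q$ roots, so it cannot vanish on all of $\Fq$.

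No step here is a genuine obstacle. The only point needing care is that the exponent $q - 1$ keeps the degree in each variable at most $q - 1$, and this is immediate from the construction. The convention $0^0 = 1$ matters only when $q - 1 = 0$, which cannot happen.
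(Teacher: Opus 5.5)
Your proof is correct, but your main argument is not the paper's. The paper argues purely by counting: there are $q^{q^n}$ functions $\Fq^n \to \Fq$ and $q^{q^n}$ polynomials with degree at most $q-1$ in each variable, so ``the obvious injection'' between them must be surjective. The injectivity is asserted there rather than proved.

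Your Lagrange interpolation with the point indicators $\prod_{i}\bigl(1-(x_i-a_i)^{q-1}\bigr)$ instead proves surjectivity directly and explicitly. It uses nothing beyond $y^{q-1}=1$ for $y \in \Fq^\times$, so it needs no injectivity at all.

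Your ``cross-check'' is exactly the paper's route, and it improves on the paper by indicating why evaluation is injective. Your sketch, however, only states the one-variable root bound. If you rely on it, write out the inductive step: write $P=\sum_i P_i(x_1,\dots,x_{n-1})x_n^i$, fix the first $n-1$ coordinates, and apply the induction hypothesis to each $P_i$.

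One point favours your approach. Later in the paper, in Lemma \ref{A form of invariants} and Lemma \ref{independence wrt smaller mus}, the authors read off coefficients from the function a polynomial defines. So what is really used downstream is uniqueness as well as existence, i.e. that the evaluation map is a bijection. Your interpolation gives surjectivity, and the equal cardinalities then give injectivity for free. The full bijection therefore follows without any root counting.
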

\begin{proof}
    There are $q^{q^n}$ number of functions from $\Fq^n \to \Fq$. Also, there are $q^{q^n}$ number of polynomials in $n$ variables over $\Fq$ of degrees less than or equal to $q - 1$ in each variable. Therefore, the obvious injection from the space of such polynomials to the space of functions from $\Fq^n \to \Fq$ is also surjective.
\end{proof}
As mentioned in the introduction, when $r=0$ and $q-1$, the Iwahori-Hecke algebra is non-commutative. It is generated by the Iwahori-Hecke operators $T_{1,0}$ and $T_{1,2}$, which satisfy the following relations: 
$$T_{1,0}^2 = \Id \quad \text{and} \quad T_{1,2}T_{1,0}T_{1,2} = -T_{1,2}.$$ 
We recall the formulas for the Iwahori-Hecke operators \cite{BL95}:
\begin{eqnarray}\label{Formulas T10 and T12}
        T_{1, 0}[g, 1] = [g\beta, 1] \text{ and } T_{1, 2}[g, 1] = \sum_{\lambda \in I_1}\left[g\begin{pmatrix}1 & 0 \\ \varpi\lambda & \varpi\end{pmatrix}, 1\right].
    \end{eqnarray}
The operator $T_{-1,0}$ satisfies the following relation (see \cite[Lemma 8(4)]{BL95}):  
\begin{eqnarray}\label{T-10 in the non-commutative case}
        T_{-1, 0} = T_{1, 0}T_{1, 2}T_{1, 0},
    \end{eqnarray}
and its formula is given by:
\begin{eqnarray}\label{Formulas for T-10 and T12}
T_{-1, 0}[g, 1] = \sum_{\lambda \in I_1}\left[g\begin{pmatrix}\varpi & \lambda \\ 0 & 1\end{pmatrix}, 1\right].
\end{eqnarray}
We state the following lemma from \cite[Lemma 4]{CS25}:
\begin{lemma}\label{L1}
  For the operators $T_{-1, 0}$ and $T_{1, 2}$, We have
  \begin{enumerate}
      \item[(i)] ${\im} T_{1,2} = {\Ker}(T_{1,0}+T_{-1,0})$,
      \item[(ii)] ${\Ker} T_{1,2} = {\im}(T_{1,0}+T_{-1,0})$.
  \end{enumerate}
\end{lemma}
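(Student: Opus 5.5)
Both identities should follow from the algebra relations alone, viewed inside the finite-dimensional algebra $A = {\rm End}_G(\IZind \mathbbm{1})$ acting on $V = \IZind \mathbbm{1}$. Write $S = T_{1,0}$, $T = T_{1,2}$ and $U = S + T_{-1,0} = S + STS$ by \eqref{T-10 in the non-commutative case}. The first step is to compute products with the relations $S^2 = \Id$ and $TST = -T$:
\[
TU = TS + TSTS = TS - TS = 0,\qquad UT = ST + STST = ST - ST = 0 .
\]
This gives ${\im} U \subseteq {\Ker} T$ and ${\im} T \subseteq {\Ker} U$, which are the easy inclusions in (ii) and (i).

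For the reverse inclusions I would look for idempotent-type identities in $A$, and the natural candidate is $P = -ST$. It satisfies $P^2 = STST = S(TST) = -ST = P$, so $P$ is idempotent. Moreover
\[
\Id - P = \Id + ST = S(S+T) = S \cdot S^{-1}U = \,?
\]
which requires care, since $U = S(\Id + TS)$; so I will instead use $Q = -TS$, which is also idempotent, and note $U = S(\Id - Q)$. The inclusion ${\Ker} U \subseteq {\im} T$ for (i) then goes as follows: if $Uv = 0$, then $(\Id - Q)v = 0$ because $S$ is invertible, so $v = Qv = -T(Sv) \in {\im} T$. For (ii), take $v$ with $Tv = 0$. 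Then $Pv = -STv = 0$, so $v = (\Id - P)v = (\Id + ST)v$. Since $\Id + ST = (\Id + ST)S\cdot S = (S + STS)S = US$, we get $v = U(Sv) \in {\im} U$.

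So the whole proof reduces to the two quadratic relations, and the key step is spotting the idempotents $-ST$ and $-TS$. The main obstacle I anticipate is sign conventions. I have to confirm that the relation really reads $T_{1,2}T_{1,0}T_{1,2} = -T_{1,2}$ in the normalization of \eqref{Formulas T10 and T12}, and that $T_{-1,0} = T_{1,0}T_{1,2}T_{1,0}$ holds exactly as stated in \eqref{T-10 in the non-commutative case}, rather than only up to a sign. If either failed, the same argument would go through with $+ST$ in place of $-ST$ and $T_{1,0} - T_{-1,0}$ in place of $U$. As a sanity check I would verify these relations directly on $[\id,1]$ using the explicit coset formulas.
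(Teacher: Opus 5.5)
Your argument is correct and complete. The relations $S^2=\Id$ and $TST=-T$ give $TU=UT=0$, and the identities $U=S(\Id+TS)$ and $\Id+ST=US$ yield the two reverse inclusions exactly as you describe. The paper gives no proof of its own here; it imports the statement from \cite[Lemma 4]{CS25}. Your derivation is therefore a self-contained proof that uses only three facts the paper records: $T_{1,0}^2=\Id$, $T_{1,2}T_{1,0}T_{1,2}=-T_{1,2}$, and $T_{-1,0}=T_{1,0}T_{1,2}T_{1,0}$.

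Your sign worry is unfounded.
\begin{itemize}
\item With the coset formulas, $\beta\begin{pmatrix}1&0\\ \varpi\lambda&\varpi\end{pmatrix}\beta=\varpi\begin{pmatrix}\varpi&\lambda\\ 0&1\end{pmatrix}$, so $T_{-1,0}=T_{1,0}T_{1,2}T_{1,0}$ holds exactly.
\item One has $T_{1,2}T_{1,0}T_{1,2}[\id,1]=\sum_{\lambda,\mu\in I_1}\bigl[\begin{pmatrix}\mu&1\\ \varpi(1+\lambda\mu)&\varpi\lambda\end{pmatrix},1\bigr]$. The $\mu=0$ terms are all $[\beta,1]$ and sum to $q[\beta,1]=0$.
\item For fixed $\mu\neq0$, the matrix lies in $\begin{pmatrix}1&0\\ \varpi\nu&\varpi\end{pmatrix}I$ with $\nu=[\lambda+\mu^{-1}]$. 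So these terms run once through $T_{1,2}[\id,1]$, giving $(q-1)T_{1,2}[\id,1]=-T_{1,2}[\id,1]$.
\end{itemize}
Since every relation you use is palindromic, the argument also does not depend on the composition convention.

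Two slips are harmless because neither claim is used. The abandoned chain ending in $S\cdot S^{-1}U$ is false: $\Id+ST$ equals $US$, not $U$. And $A$ is not finite-dimensional, since its basis is indexed by the infinite set $IZ\backslash G/IZ$.
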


The next lemma gives a necessary condition for a function in the induced space $\IZind \sigma_0$ to be in the image of the Hecke operator $T$.
    
\begin{lemma}\label{functions are 0}
    Let $f$ be a function in $\KZind \sigma_{0}$ supported on the vertices in $B(n)\setminus B(n - 1)$
    for some integer $n \geq 1$. Then, for $f$ to belong to ${\im} T$, it must have the same value on all vertices of radius $n$ having a common neighbour of radius $n - 1$.
\end{lemma}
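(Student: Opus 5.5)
Since $\sigma_0$ is the trivial character, we may regard elements of $\KZind \sigma_0$ as finitely supported functions on the vertices of the tree, and $T$ acts by $Tg(v)=\sum_{u\sim v} g(u)$: for $\sigma_0$ trivial, the Barthel--Livn\'e operator sends $[g,1]$ to the sum of $[g',1]$ over the $q+1$ neighbours $g'KZ$ of $gKZ$. The plan is to suppose $f = T g$ with $g$ finitely supported and show, by working inward from the outside of $g$'s support, that $g$ must be constant on sibling vertices at radius $n-1$ whose values feed into $f$ on radius $n$.

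First I will show that $g$ is supported in $B(n-1)$. Let $m$ be the largest radius with $g$ nonzero somewhere on $B(m)\setminus B(m-1)$, and suppose $m \geq n$. Pick a vertex $v$ of radius $m$ with $g(v)\neq 0$, and a child $u$ of $v$ at radius $m+1$; this exists because every vertex has $q$ children. The only neighbour of $u$ in the support of $g$ can be $v$, since $u$'s other neighbours have radius $m+2$. Hence $Tg(u) = g(v) \neq 0$. But $u$ has radius $m+1 > n$, so $u$ lies outside the support of $f$, which is a contradiction. Therefore $m \leq n-1$.

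Now take a vertex $w$ of radius $n-1$ and two of its children $v_1, v_2$ of radius $n$. Since $g$ is supported in $B(n-1)$, the only neighbour of $v_i$ carrying a possibly nonzero value of $g$ is its parent $w$. Thus $f(v_i) = Tg(v_i) = g(w)$ for $i = 1, 2$, so $f(v_1) = f(v_2)$.

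We must also treat vertices of radius $n$ that share a neighbour of radius $n-1$ while lying on different sides of the tree. When $n = 1$, the common neighbour is the root vertex, and all $q+1$ of its neighbours are its "children" in the sense above, so the same argument applies. When $n \geq 2$, two radius-$n$ vertices with a common radius-$(n-1)$ neighbour are necessarily children of that vertex. So the argument covers every case.

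The only real obstacle is to check the formula for $T$ on $\KZind$ of the trivial weight, namely that it is exactly the sum over the $q+1$ adjacent vertices with coefficient $1$. We take this from \cite[Section 3.1]{BL94} with $r=0$: there $T[g,v]=\sum_{\lambda\in I_1}[g g^0_{1,\lambda},v]+[g\alpha, v]$, and these coset representatives enumerate the neighbours. Once this formula is in hand, the argument above is purely combinatorial.
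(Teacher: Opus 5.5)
Your proof is correct and follows essentially the same approach as the paper. The paper likewise shows that any preimage under $T$ is supported in $B(n-1)$, and then reads off its value at a radius-$n$ vertex as the value at that vertex's unique neighbour of radius $n-1$. Your outermost-vertex argument simply supplies the detail that the paper leaves as ``one can check.''
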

\begin{proof}
    Let $f$ be a function with support contained in $B(n)$ but not in $B(n - 1)$ for some $n \geq 1$. Let $f' \in \KZind \sigma_0$ be a function such that $f = Tf'$. By the definition of $T$, one can check that the support of $f'$ has to be contained in $B(n - 1)$. Moreover, the value of $Tf'$ at a vertex $v \in B(n)\setminus B(n - 1)$ is equal to the value of $f'$ at the vertex in $B(n - 1)$ adjacent to $v$. This proves the lemma.
\end{proof}
\subsection{Reduction Lemmas} 
In this section we analyze the reduction of the functions in the tree of ${\rm SL}_2(F)$ modulo $({\im} T_{1, 2}, {\Ker} T_{1, 2})$. We begin by showing that, modulo the image of $T_{1, 2}$, a function supported on a vector in $B(n)$ directed away from the root vertex reduces to a function supported on a vector in $B(n-1)$ directed towards the root vertex.
\begin{lemma}\label{tnk like terms}
 Let $n \geq 1$ and $0 \leq k \leq q-1$.
Then, modulo ${\im} T_{1, 2}$, we have
\[
    \sum_{\mu \in I_n}c_{[\mu]_{n - 1}}\mu_{n - 1}^k\left[g^0_{n - 1, [\mu]_{n - 1}}\!\!\begin{pmatrix}1 & [\mu_{n - 1}] \\ 0 & 1\end{pmatrix}w, 1\right] = \begin{cases}
        0 & \!\!\text{ if } k \neq q - 1, \\
        -\displaystyle \!\!\!\!\!\!\sum_{[\mu]_{n-1} \in I_{n-1}}\!\!\!\!\!\!c_{[\mu]_{n - 1}}\left[g^0_{n-1, [\mu]_{n-1}}, 1\right] &\!\! \text{ if } k = q - 1,
    \end{cases}
\]
where $c_{[\mu]_{n - 1}}:=c(\mu_{0}, \ldots, \mu_{n - 2})$.

\end{lemma}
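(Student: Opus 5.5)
The plan is to prove the identity one fibre at a time and then sum. Both sides are linear in the coefficients $c_{[\mu]_{n-1}}$, so it suffices to fix $\nu \in I_{n-1}$ and prove the identity for the single edge family at $g^0_{n-1,\nu}$. Put
\[
E_k(\nu) := \sum_{a \in \F_q} a^k \left[g^0_{n-1,\nu}\begin{pmatrix}1 & [a] \\ 0 & 1\end{pmatrix}w, 1\right] + \delta_{k,q-1}\left[g^0_{n-1,\nu}, 1\right].
\]
We must show $E_k(\nu) \in {\im} T_{1,2}$ for every $0 \le k \le q-1$. To test membership in ${\im} T_{1,2}$ I will use Lemma \ref{L1}(i): it is enough to show $(T_{1,0}+T_{-1,0})E_k(\nu) = 0$. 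I will compute each term with the explicit formulas (\ref{Formulas T10 and T12}) and (\ref{Formulas for T-10 and T12}).

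\textbf{Applying $T_{1,0}$.} This is right multiplication by $\beta$. Since $w\beta = w\alpha w$ is lower triangular up to $IZ$, we have
\[
\begin{pmatrix}1 & [a] \\ 0 & 1\end{pmatrix}w\beta \equiv \begin{pmatrix}\varpi & [a] \\ 0 & 1\end{pmatrix} \pmod{IZ}.
\]
So $T_{1,0}$ sends the edge indexed by $a$ to $[g^0_{n,\nu+[a]\varpi^{n-1}},1]$, the oppositely oriented edge. It also sends $[g^0_{n-1,\nu},1]$ to $[g^0_{n-1,\nu}\beta,1]$.

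\textbf{Applying $T_{-1,0}$.} For each $a$ we have
\[
\begin{pmatrix}1 & [a] \\ 0 & 1\end{pmatrix}w\begin{pmatrix}\varpi & [\lambda] \\ 0 & 1\end{pmatrix} = \begin{pmatrix}[a]\varpi & [a][\lambda]+1 \\ \varpi & [\lambda]\end{pmatrix}.
\]
I will rewrite this, via the coset representatives (\ref{edges}), as an edge leaving the vertex $g^0_{n-1,\nu}\begin{pmatrix}1 & [a] \\ 0 & 1\end{pmatrix}wKZ$. For $\lambda \neq 0$ these are edges pointing farther out. For $\lambda = 0$ one should instead obtain an edge of the form $[g^0_{n,\nu+[a]\varpi^{n-1}},1]$-type or its reverse. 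This bookkeeping is the main obstacle: one must track exactly which representative arises and whether the $a$-dependence survives.

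\textbf{Cancellation.} Once the terms are identified, each outward edge appears once, with coefficient $a^k$ summed over a set on which the edge does not depend on $a$ in the relevant coordinate. Such coefficients collapse to $\sum_{a\in\F_q} a^k$, which is $0$ for $k<q-1$ (including $k=0$, since $q\cdot 1 = 0$) and $-1$ for $k=q-1$.

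For $k \neq q-1$ every term should therefore either cancel against the $T_{1,0}$ contributions or vanish. This gives $E_k(\nu)\in {\im} T_{1,2}$ and hence the value $0$.

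For $k=q-1$ a residual term $-[\ast]$ remains. It should be exactly cancelled by $(T_{1,0}+T_{-1,0})[g^0_{n-1,\nu},1]$, because $T_{-1,0}[g^0_{n-1,\nu},1]=\sum_\lambda[g^0_{n,\nu+[\lambda]\varpi^{n-1}},1]$ reproduces the reversed outward edges. This is why the correction term $-c_{[\mu]_{n-1}}[g^0_{n-1,[\mu]_{n-1}},1]$ appears on the right-hand side.

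If the bookkeeping via $T_{-1,0}$ becomes unwieldy, I will fall back on a direct route. I would exhibit $T_{1,2}[g^0_{n-1,\nu}\beta,1]$, or $T_{1,2}$ of a suitable combination of the edges in the fibre, and read off the same relations. Both arguments rest on the single vanishing fact $\sum_{a\in\F_q}a^k = -\delta_{k,q-1}$.
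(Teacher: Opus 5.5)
Your strategy of testing membership in ${\im} T_{1,2}$ through ${\Ker}(T_{1,0}+T_{-1,0})$, via Lemma~\ref{L1}(i), is sound. However, the step you call the main obstacle is never carried out, and your sketch of it is wrong in a way that would block the cancellation.

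Write $u(a)=\begin{pmatrix}1&[a]\\0&1\end{pmatrix}$ and $\nu=[\mu]_{n-1}$.

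\textbf{The $\lambda\neq 0$ terms.} These terms of $T_{-1,0}$ are not farther out. With $b=a+\lambda^{-1}$, one checks that $(g^0_{1,[b]})^{-1}u(a)w\begin{pmatrix}\varpi&[\lambda]\\0&1\end{pmatrix}$ is integral, upper triangular mod $\varpi$ with unit diagonal, and has determinant $-1$. Hence it lies in $I$. So these terms are the sibling edges $[g^0_{n,\nu+[b]\varpi^{n-1}},1]$, $b\neq a$. They lie in $B(n)$, are directed towards the root, and are exactly the type produced by $T_{1,0}$.

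\textbf{The $\lambda=0$ term.} Here the matrix is $u(a)\beta$, and $\beta^{-1}u(a)\beta\in I$. So the term is $[g^0_{n-1,\nu}\beta,1]$, which is independent of $a$.

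If the $\lambda\neq0$ terms really lay at radius $n+1$, nothing in your expression could cancel them. The correct bookkeeping is
\[
(T_{1,0}+T_{-1,0})\bigl[g^0_{n-1,\nu}u(a)w,1\bigr]=\sum_{b\in\Fq}\bigl[g^0_{n,\nu+[b]\varpi^{n-1}},1\bigr]+\bigl[g^0_{n-1,\nu}\beta,1\bigr]=(T_{1,0}+T_{-1,0})\bigl[g^0_{n-1,\nu},1\bigr].
\]
Here the $T_{1,0}$-term supplies precisely the missing summand $b=a$. So each outward edge is congruent to $[g^0_{n-1,\nu},1]$ modulo ${\im} T_{1,2}$. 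Then $\sum_{a\in\Fq}a^k=-\delta_{k,q-1}$ (with $0^0=1$) gives $(T_{1,0}+T_{-1,0})E_k(\nu)=0$ for all $k$ at once.

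With that fix, your route is genuinely different from the paper's. The paper exhibits an explicit preimage. It computes $T_{1,2}(s_1^k)=\bigl(\sum_{\mu}\mu^k\bigr)\bigl([\id,1]+t_1^0\bigr)-t_1^k$, so $t_1^k=-T_{1,2}(s_1^k)$ for $k\neq q-1$, and then translates by $g^0_{n-1,\nu}$. For $k=q-1$ it splits $\mu^{q-1}=1-\delta_{\mu,0}$ and passes through the isomorphism $\tau\simeq\KZind\sigma_0/{\im} T$ of \cite{CS25}. So, as written, that case is only obtained modulo $({\im} T_{1,2},{\Ker} T_{1,2})$.

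Your kernel test treats all $k$ uniformly and gives the $k=q-1$ case modulo ${\im} T_{1,2}$ alone, as the statement claims. The price is that it depends on Lemma~\ref{L1}(i) rather than producing a preimage.

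Your fallback $T_{1,2}[g^0_{n-1,\nu}\beta,1]$ would only yield the $k=0$ family, since $T_{1,2}[\beta,1]=t_1^0$. The element to hit is $g^0_{n-1,\nu}s_1^k$, or, edge by edge, use $T_{1,2}\bigl([\beta,1]-[g^0_{1,[a]},1]\bigr)=[u(a)w,1]-[\id,1]$.
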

\begin{proof}
First, let us assume that $k$ is not equal to $q-1$. Note that
\begin{equation}\label{sum in t_1 family}
   \sum_{\mu \in I_n}c_{[\mu]_{n - 1}}\mu_{n - 1}^k\left[g^0_{n - 1, [\mu]_{n - 1}}\begin{pmatrix}1 & [\mu_{n - 1}] \\ 0 & 1\end{pmatrix}w, 1\right] = \sum\limits_{\substack{[\mu]_{n-1}\in I_{n-1}}}c_{[\mu]_{n - 1}}g^0_{n - 1, [\mu]_{n - 1}} t_1^k. 
\end{equation}

Thus, it is sufficient to prove that $t_1^k \in {\im}{T_{1,2}}$ for $k \neq q-1$. 
Applying $T_{1, 2}$ to $s_1^{k}$, we get
    \begin{eqnarray*}
        T_{1, 2}(s_1^{k}) & = & \sum_{\mu \in I_1}\sum_{\lambda \in I_1}\mu^{k}\left[\begin{pmatrix}\varpi & \mu \\ 0 & 1\end{pmatrix}\begin{pmatrix}1 & 0 \\ \varpi \lambda & \varpi\end{pmatrix}, 1\right] \\
         & = & \sum_{\mu \in I_1}\sum_{\lambda \in I_1}\mu^k\left[\begin{pmatrix}\varpi + \varpi \lambda \mu & \varpi \mu \\ \varpi \lambda & \varpi\end{pmatrix}, 1\right] \\
        & = & \sum_{\mu \in I_1}\sum_{\lambda \in I_1} \mu^k \left[\begin{pmatrix}1 + \lambda \mu & \mu \\ \lambda & 1\end{pmatrix}, 1\right].
    \end{eqnarray*}
 Now, for $\lambda \neq 0$, we may use the decomposition
\[
        \begin{pmatrix}1 + \lambda \mu & \mu \\ \lambda & 1\end{pmatrix} = \begin{pmatrix} 1 & \lambda^{-1} +\mu  \\ 0 & 1\end{pmatrix} \begin{pmatrix} 0 & -\lambda^{-1} \\ \lambda & 1\end{pmatrix}.
    \]
Then, separating the $\lambda = 0$ part, we have 
\begin{eqnarray*}
  T_{1,2}(s_1^{k}) & = & \sum_{\mu \in I_1}\mu^{k}\left[\begin{pmatrix} 1 & \mu  \\ 0 & 1\end{pmatrix}, 1\right] + \sum_{\mu \in I_{1}} \sum_{0 \neq \lambda \in I_1}\mu^k \left[\begin{pmatrix} 1 & \lambda^{-1} +\mu  \\ 0 & 1\end{pmatrix}w\begin{pmatrix} \lambda & 1 \\ 0 &  -\lambda^{-1}\end{pmatrix}, 1\right] \\
  & = & \sum_{\mu \in I_1}\mu^{k}\left[\id , 1\right] +\sum_{\mu \in I_{1}} \sum_{0 \neq \lambda \in I_1} \mu^k \left[\begin{pmatrix} 1 & [\lambda^{-1} +\mu]  \\ 0 & 1\end{pmatrix}w, 1\right]. 
\end{eqnarray*}
In the last equality, we have used the fact that $\lambda^{-1} + \mu$ and $[\lambda^{-1} + \mu]$ differ by an element in $\varpi \mathcal{O}_F$ and that $w$ normalizes $K(1)$. Making the substitution $\lambda \mapsto (\lambda - \mu)^{-1}$, we see that
\begin{eqnarray*}  
  T_{1, 2}(s_1^k)& = & \left(\sum_{\mu \in I_1}\mu^{k}\right)\left[\id , 1\right] +\sum_{\mu \in I_{1}} \sum_{ \lambda \neq \mu \in I_1} \mu^k \left[\begin{pmatrix} 1 & \lambda  \\ 0 & 1\end{pmatrix}w, 1\right]\\
  & = & \left(\sum_{\mu \in I_1}\mu^{k}\right)\left[\id , 1\right] +\sum_{\mu \in I_{1}} \left(\sum_{\lambda \in I_1} \mu^k \left[\begin{pmatrix} 1 & \lambda  \\ 0 & 1\end{pmatrix}w, 1\right]- \mu^k \left[\begin{pmatrix} 1 & \mu  \\ 0 & 1\end{pmatrix}w , 1\right] \right)\\
   & = & \left(\sum_{\mu \in I_1}\mu^{k}\right)\left[\id , 1\right] +\sum_{\lambda \in I_{1}} \left(\sum_{\mu \in I_1} \mu^k \right)\left[\begin{pmatrix} 1 & \lambda  \\ 0 & 1\end{pmatrix}w, 1\right]-\sum_{\mu \in I_1}\mu^k \left[\begin{pmatrix} 1 & \mu  \\ 0 & 1\end{pmatrix}w , 1\right]\\
   & = & \left(\sum_{\mu \in I_1}\mu^{k}\right)\left[\id , 1\right] +\sum_{\lambda \in I_{1}} \left(\sum_{\mu \in I_1} \mu^k \right)\left[\begin{pmatrix} 1 & \lambda  \\ 0 & 1\end{pmatrix}w, 1\right]- t_1^k.
 \end{eqnarray*}
 So, we have 
\begin{equation}\label{T_1,2 applied on s family}
    T_{1,2}(s_1^k)= \left(\sum_{\mu \in I_1}\mu^{k}\right)\left[\id , 1\right] +\sum_{\lambda \in I_{1}} \left(\sum_{\mu \in I_1} \mu^k \right)\left[\begin{pmatrix} 1 & \lambda  \\ 0 & 1\end{pmatrix}w, 1\right]- t_1^k.
\end{equation}
 
Using the identity 
\begin{eqnarray}\label{sum of powers of roots of unity}
            \sum_{\zeta \in \Fq}\zeta^l = 
            \begin{cases}
                0 & \text{ if } q - 1 \nmid l \\
                q - 1 & \text{ if } q - 1 \mid l,
                \end{cases}
                & \text{ for $l \geq 1$,}
            \end{eqnarray} 
to sum over $\mu \in I_1$, we get  $t_1^{k} \in {\im}T_{1,2}$. 
Thus, the lemma has been proved for $k \neq q-1$. 

Next, assume that $k=q-1$. Note that
\[\sum_{\mu \in I_n}c_{[\mu]_{n - 1}}\mu_{n - 1}^{q-1}\left[g^0_{n - 1, [\mu]_{n - 1}}\begin{pmatrix}1 & [\mu_{n - 1}] \\ 0 & 1\end{pmatrix}w, 1\right]= \sum_{\mu \in I_n}c_{[\mu]_{n - 1}}\mu_{n - 1}^{q-1}\left[g^0_{n, \mu} \beta, 1\right].\] 
Also, notice that 
\begin{equation*}
\sum_{\mu \in I_n}c_{[\mu]_{n - 1}}\mu_{n - 1}^{q-1}\left[g^0_{n, \mu} \beta, 1\right]= \sum_{\mu \in I_n}c_{[\mu]_{n - 1}}\mu_{n - 1}^{0}\left[g^0_{n, \mu} \beta, 1\right]- \sum_{\mu \in I_n, \mu_{n-1}=0}c_{[\mu]_{n - 1}}\left[g^0_{n, \mu} \beta, 1\right].    
\end{equation*}
Using the first case of Lemma \ref{tnk like terms}, we have the following:
\begin{equation}
\sum_{\mu \in I_n}c_{[\mu]_{n - 1}}\mu_{n - 1}^{q-1}\left[g^0_{n, \mu} \beta, 1\right]= - \sum_{\mu \in I_n, \mu_{n-1}=0}c_{[\mu]_{n - 1}}\left[g^0_{n, \mu} \beta, 1\right]  
\end{equation}
modulo $({\im} T_{1, 2}, {\Ker} T_{1, 2})$. Using the isomorphism given in \cite[Theorem 3]{CS25}, we get 
\begin{eqnarray*}
   - \sum_{\mu \in I_n, \mu_{n-1}=0}c_{[\mu]_{n - 1}}\left[g^0_{n, \mu} \beta, 1\right]   & \mapsto & - \sum_{\mu \in I_n, \mu_{n-1}=0}c_{[\mu]_{n - 1}}\left[g^0_{n, \mu} \alpha, 1\right] \\
   & = &- \sum_{\mu \in I_n, \mu_{n-1}=0}c_{[\mu]_{n - 1}}\left[g^0_{n-1, [\mu]_{n-1}}, 1\right].       
\end{eqnarray*}
Again using the isomorphism given in \cite[Theorem 3]{CS25}, we have
\begin{eqnarray*}
- \sum_{[\mu]_{n-1} \in I_{n-1} }c_{[\mu]_{n - 1}}\left[g^0_{n-1, [\mu]_{n-1}}, 1\right]\mapsto - \sum_{[\mu]_{n-1} \in I_{n-1} }c_{[\mu]_{n - 1}}\left[g^0_{n-1, [\mu]_{n-1}}, 1\right].
\end{eqnarray*}
So, we have 
\begin{equation*}
    \sum_{\mu \in I_n}c_{[\mu]_{n - 1}}\mu_{n - 1}^{q-1}\left[g^0_{n, \mu} \beta, 1\right] =  - \sum_{[\mu]_{n-1} \in I_{n-1} }c_{[\mu]_{n - 1}}\left[g^0_{n-1, [\mu]_{n-1}}, 1\right] \mod ({\im}T_{1,2},{\Ker}T_{1,2}).
\end{equation*}
This proves the second case of the lemma. \qedhere
\end{proof}

The following lemma shows that, modulo $({\im} T_{1, 2}, {\Ker} T_{1, 2})$, a function supported on a vector in $B(n)$ directed towards the root vertex reduces to a function supported on a vector in $B(n-2)$ directed towards the root vertex.

\begin{lemma}\label{snk like terms}
    Let $n \geq 2$ and $0\leq k \leq q-1$. Then, modulo $({\im} T_{1, 2}, {\Ker} T_{1, 2})$ we have
    \[
        \sum_{\mu \in I_n}c_{[\mu]_{n - 2}}\mu_{n - 2}^{k}\left[g^0_{n, \mu}, 1\right] =
        \begin{cases}
            0 & \text{ if } k \neq q - 1, \\
            \displaystyle\sum_{[\mu]_{n - 2} \in I_{n - 2}}\!\!\!\!\!c_{[\mu]_{n - 2}}[g^0_{n - 2, [\mu]_{n - 2}}, 1] & \text{ if } k = q - 1,
        \end{cases}
    \]
where $c_{[\mu]_{n - 2}} := c(\mu_0, \ldots, \mu_{n - 3})$.
\end{lemma}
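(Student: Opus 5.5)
The plan is to collapse the sum over the last digit $\mu_{n-1}$ into an application of $T_{-1,0}$. Lemma \ref{L1}(ii) then lets us trade $T_{-1,0}$ for $-T_{1,0}$, which turns vertices pointing towards the root into edges pointing away from it. After that, the first part of Lemma \ref{tnk like terms} finishes the reduction.

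\emph{Step 1: rewrite the sum using $T_{-1,0}$.} The coefficient $c_{[\mu]_{n-2}}\mu_{n-2}^k$ depends only on $[\mu]_{n-1}$, not on $\mu_{n-1}$. Write $\mu=[\mu]_{n-1}+[\mu_{n-1}]\varpi^{n-1}$. A direct matrix multiplication gives
\[
g^0_{n,\mu}=g^0_{n-1,[\mu]_{n-1}}\begin{pmatrix}\varpi & [\mu_{n-1}]\\ 0 & 1\end{pmatrix}.
\]
By formula \eqref{Formulas for T-10 and T12} we therefore get
\[
\sum_{\mu\in I_n}c_{[\mu]_{n-2}}\mu_{n-2}^k[g^0_{n,\mu},1]=\sum_{\nu\in I_{n-1}}c_{[\nu]_{n-2}}\nu_{n-2}^k\,T_{-1,0}[g^0_{n-1,\nu},1].
\]

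\emph{Step 2: replace $T_{-1,0}$ by $-T_{1,0}$.} By Lemma \ref{L1}(ii), $(T_{1,0}+T_{-1,0})[g,1]\in{\Ker}T_{1,2}$ for every $g$. Hence modulo $({\im}T_{1,2},{\Ker}T_{1,2})$ we have
\[
T_{-1,0}[g^0_{n-1,\nu},1]\equiv -T_{1,0}[g^0_{n-1,\nu},1]=-[g^0_{n-1,\nu}\beta,1].
\]
By the identity recorded in the description of $B^0(m)$, the right-hand side equals
\[
-\Bigl[g^0_{n-2,[\nu]_{n-2}}\begin{pmatrix}1&[\nu_{n-2}]\\0&1\end{pmatrix}w,1\Bigr].
\]
So the original sum is congruent to
\[
-\sum_{\nu\in I_{n-1}}c_{[\nu]_{n-2}}\nu_{n-2}^k\Bigl[g^0_{n-2,[\nu]_{n-2}}\begin{pmatrix}1&[\nu_{n-2}]\\0&1\end{pmatrix}w,1\Bigr].
\]

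\emph{Step 3: apply Lemma \ref{tnk like terms}.} The expression from Step 2 is exactly of the shape treated in Lemma \ref{tnk like terms}, with $n$ replaced by $n-1\ge 1$ (this is where the hypothesis $n\geq 2$ is used). If $k\neq q-1$, the lemma shows the sum lies in ${\im}T_{1,2}$, so the original sum is $0$. If $k=q-1$, the lemma gives
\[
-\Bigl(-\sum_{[\nu]_{n-2}\in I_{n-2}}c_{[\nu]_{n-2}}[g^0_{n-2,[\nu]_{n-2}},1]\Bigr)=\sum c_{[\mu]_{n-2}}[g^0_{n-2,[\mu]_{n-2}},1],
\]
which is the claimed formula.

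\emph{Main obstacle.} The delicate point is the bookkeeping in Steps 1–2. First, the matrix factorization must use the correct Teichmüller representatives; any discrepancy should be absorbed because the entries differ only by elements of $\varpi\mathcal O$ inside $I(1)$, as in the proof of Lemma \ref{tnk like terms}. Second, the sign must be tracked carefully when passing to $-T_{1,0}$. It also has to be checked that the Hecke operators commute with left translation by $g^0_{n-1,\nu}$; this holds since they are $G$-endomorphisms.
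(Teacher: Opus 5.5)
Your proposal is correct and follows the paper's proof essentially verbatim. You write the sum as $T_{-1,0}$ applied to a sum over $I_{n-1}$, use Lemma~\ref{L1}(ii) to replace $T_{-1,0}$ by $-T_{1,0}$ (turning vertices into the edges $\bigl[g^0_{n-2,[\mu]_{n-2}}\begin{pmatrix}1&[\mu_{n-2}]\\0&1\end{pmatrix}w,1\bigr]$), and conclude with Lemma~\ref{tnk like terms} at level $n-1$, where the extra minus sign correctly produces the stated sum when $k=q-1$. The Teichmüller concern you raise does not arise, since the factorization $g^0_{n,\mu}=g^0_{n-1,[\mu]_{n-1}}\begin{pmatrix}\varpi&[\mu_{n-1}]\\0&1\end{pmatrix}$ is exact.
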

\begin{proof}
    Let $f = \sum_{\mu \in I_n}c_{[\mu]_{n - 2}}\mu_{n - 2}^k [g^0_{n, \mu}, 1]$. Then,
    \[
        f = T_{-1, 0}\!\!\!\!\!\!\sum_{[\mu]_{n - 1} \in I_{n - 1}}\!\!\!\!\!\!c_{[\mu]_{n - 2}}\mu_{n - 2}^k[g^0_{n - 1, [\mu]_{n - 1}}, 1] \\
        \equiv -\!\!\!\!\!\sum_{[\mu]_{n - 1} \in I_{n - 1}} \!\!\!\!\!\!c_{[\mu]_{n - 2}}\mu_{n - 2}^k\!\!\left[g^0_{n - 2, [\mu]_{n - 2}}\!\!\begin{pmatrix}1 & [\mu_{n - 2}] \\ 0 & 1\end{pmatrix}w, 1\right].
    \]
    We then conclude the proof by applying Lemma~\ref{tnk like terms}.
\end{proof}
The rest of the section is devoted to analyzing the functions $s_n^k$ and $t_n^k$ modulo $({\im} T_{1, 2}, {\Ker} T_{1, 2})$.
\begin{remark}\label{t family}
Note that applying Lemma \ref{tnk like terms} with $k \neq q-1$ and $c_{[\mu]_{n - 1}}=1$, we get $t_n^k \equiv 0 \mod {\im}T_{1,2}$ for $n\geq 1$. Also, from the observation $-t_1^{0} = T_{1,2}(s_1^{0})$ and using \eqref{T_1,2 applied on s family} for $k=q-1$, we find that $t_1^{q-1} \equiv -\left[\id, 1\right] \mod{({\im}T_{1,2}, {\Ker}{T_{1,2}})}$.
For $t_2^{q-1}$, taking $n=2$ and $c_{[\mu]_{n-1}} = 1$ in \eqref{sum in t_1 family}, and using $t_1^{q-1}=-\left[\id, 1\right]+ T_{1,2}(s)$, we can deduce that $t_2^{q-1}= -T_{-1,0}\left(\left[\id,1\right]\right)+T_{1,2}(s')$. Therefore, we have $t_2^{q-1} \equiv \left[\beta, 1\right] \mod{({\im}T_{1,2}, {\Ker}{T_{1,2}})}$ by Lemma \ref{L1}(ii).
\end{remark}

Now, we analyze $t_n^{q-1}$ for $n \geq 3$ modulo $({\im}T_{1,2}, {\Ker}{T_{1,2}})$.
\begin{lemma}
  For $n \geq 3$, we have $t_n^{q-1} \equiv 0 \mod ({\im}T_{1,2}, {\Ker}{T_{1,2}})$.  
\end{lemma}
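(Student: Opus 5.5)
Apply Lemma \ref{tnk like terms} directly to $t_n^{q-1}$, then Lemma \ref{snk like terms} to the result, so that everything reduces to the vertex function $s_{n-1}^0$ at depth $n-1\geq 2$.

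By definition,
\[
t_n^{q-1}=\sum_{\mu\in I_n}\mu_{n-1}^{q-1}\left[g^0_{n-1,[\mu]_{n-1}}\begin{pmatrix}1&[\mu_{n-1}]\\0&1\end{pmatrix}w,1\right].
\]
This is the left-hand side of Lemma \ref{tnk like terms} with $k=q-1$ and $c_{[\mu]_{n-1}}\equiv 1$. That lemma therefore gives
\[
t_n^{q-1}\equiv -\sum_{\lambda\in I_{n-1}}[g^0_{n-1,\lambda},1]=-s_{n-1}^0 \mod ({\im}T_{1,2},{\Ker}T_{1,2}).
\]
It remains to show $s_{n-1}^0\equiv 0$ for $n-1\geq 2$.

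Write $m=n-1\geq 2$ and $\lambda=(\lambda_0,\dots,\lambda_{m-1})\in I_m$. The coefficient in $s_m^0$ is constant, so we may view it as $c\cdot\lambda_{m-2}^0$ with $c\equiv 1$, a function of the digits $\lambda_0,\dots,\lambda_{m-3}$ only. Lemma \ref{snk like terms} with $k=0\neq q-1$ then gives $s_m^0\equiv 0$, using that $q-1\neq 0$ as exponents, which holds since $q>3$. Hence $t_n^{q-1}\equiv 0$. For $m=2$ the coefficient $c$ depends on no digits ($I_0=\{0\}$), and the lemma still applies, since its proof only uses $T_{-1,0}$ and Lemma \ref{tnk like terms} at level $n-1=1$.

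There are two points to check carefully.

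\begin{itemize}
\item The convention $0^0=1$ means $\lambda_{m-2}^0$ is identically $1$, including at $\lambda_{m-2}=0$. This is what makes the identification of $s_m^0$ with the $k=0$ case of Lemma \ref{snk like terms} legitimate.
\item The step $f=T_{-1,0}(\cdots)\equiv -(\cdots)$ inside the proof of Lemma \ref{snk like terms} relies on Lemma \ref{L1}(ii), i.e.\ $T_{1,0}+T_{-1,0}$ kills into ${\Ker}T_{1,2}$. Feeding this into Lemma \ref{tnk like terms} with $k=0$ uses only ${\im}T_{1,2}$, so no extra contribution from the root vertex appears.
\end{itemize}

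This is exactly where $n\geq 3$ matters. For $n=2$ the same route gives $-s_1^0$, and Lemma \ref{snk like terms} is unavailable at level $1$. Instead, $s_1^0=T_{-1,0}[\id,1]\equiv -[\beta,1]$, which recovers Remark \ref{t family}.
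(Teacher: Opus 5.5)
Your argument is correct and is essentially the paper's. Both proofs reduce $t_n^{q-1}$ to $-s_{n-1}^0=-T_{-1,0}^{n-1}[\id,1]$, the paper via the exact identity $t_1^{q-1}=-[\id,1]+T_{1,2}(s)$ from Remark~\ref{t family} and you via the $k=q-1$ case of Lemma~\ref{tnk like terms}. Both then kill $s_{n-1}^0$ for $n-1\geq 2$ (this is exactly Lemma~\ref{info about sn0}(ii)) by trading the outermost $T_{-1,0}$ for $-T_{1,0}$ via Lemma~\ref{L1}(ii).

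The only difference is cosmetic. The paper finishes with the relation $T_{1,0}T_{-1,0}=T_{1,2}T_{1,0}$, while you cite Lemma~\ref{snk like terms} with $k=0$; both amount to $t_1^0\in\mathrm{Im}\,T_{1,2}$, since $T_{1,0}T_{-1,0}[\id,1]=t_1^0$. Also, $0\neq q-1$ holds for every $q\geq 2$, so $q>3$ is not needed at that step.
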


\begin{proof}
Indeed,
\begin{eqnarray*}
 t_n^{q-1}&= &\sum\limits_{\substack{[\mu]_{n-1}\in I_{n-1}}}\left(\begin{array}{cc}
\varpi^{n-1} & [\mu]_{n-1} \\
0    &  1
\end{array}\right) t_1^{q-1} \\
&=& \sum\limits_{\substack{[\mu]_{n-1}\in I_{n-1}}}\left(\begin{array}{cc}
\varpi^{n-1} & [\mu]_{n-1} \\
0    &  1
\end{array}\right) (-\left[\id, 1\right]+ T_{1,2}(s)) \\
&\equiv & -T_{-1,0}^{n-1}\left([\id, 1]\right) \mod{({\im}T_{1,2}, {\Ker}{T_{1,2}})}\\
&\equiv & -T_{-1,0}\left(T_{-1,0}^{n-2}\left([\id, 1]\right)\right) \mod{({\im}T_{1,2}, {\Ker}{T_{1,2}})}.
\end{eqnarray*}
The second equality follows from Remark \ref{t family}. Using Lemma \ref{L1}(ii), the last expression becomes
\[T_{1,0}\left(T_{-1,0}^{n-2}\left([\id, 1]\right)\right)
\equiv  T_{1,0}T_{-1,0}\left(T_{-1,0}^{n-3}\left([\id, 1]\right)\right) \mod{({\im}T_{1,2}, {\Ker}{T_{1,2}})}.\]
Using \eqref{T-10 in the non-commutative case}, the expression on the right is $0 \mod{({\im}T_{1,2}, {\Ker}{T_{1,2}})}$. 
\end{proof}


\begin{lemma}\label{info about sn0}
 We have, 
 \begin{enumerate}
     \item[(i)] $s_1^0 \equiv -\left[\beta, 1\right] \mod ({\im}T_{1,2}, {\Ker}{T_{1,2}})$ ,
     \item[(ii)] $s_n^0 \equiv 0 \mod ({\im}T_{1,2}, {\Ker}{T_{1,2}})$ for $n \geq 2$.
 \end{enumerate}
\end{lemma}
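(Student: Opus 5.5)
Both parts come from writing $s_n^0$ as the image of a simpler vector under $T_{-1,0}$, and then using Lemma \ref{L1}(ii), which says $T_{-1,0}\equiv -T_{1,0}$ modulo $\Ker T_{1,2}$.

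For (i), note that $s_1^0=\sum_{\mu\in I_1}[g^0_{1,\mu},1]$. By the formula \eqref{Formulas for T-10 and T12}, this is exactly $T_{-1,0}([\id,1])$.
\begin{itemize}
\item Since $\Ker T_{1,2}=\im(T_{1,0}+T_{-1,0})$, we get $T_{-1,0}([\id,1])\equiv -T_{1,0}([\id,1])$.
\item By \eqref{Formulas T10 and T12}, $T_{1,0}([\id,1])=[\beta,1]$.
\end{itemize}
Hence $s_1^0\equiv -[\beta,1]$. As a cross-check, \eqref{T_1,2 applied on s family} with $k=0$ gives $T_{1,2}(s_1^0)=-t_1^0$, since $\sum_\mu \mu^0=q=0$. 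This is consistent with Remark \ref{t family}.

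For (ii), take $n\ge 2$ and write
\[
s_n^0=\sum_{\mu\in I_n}[g^0_{n,\mu},1]=T_{-1,0}\Big(\sum_{\nu\in I_{n-1}}[g^0_{n-1,\nu},1]\Big)=T_{-1,0}(s_{n-1}^0).
\]
This uses $g^0_{n-1,\nu}\begin{pmatrix}\varpi&\lambda\\0&1\end{pmatrix}=g^0_{n,\nu+\varpi^{n-1}\lambda}$. Iterating gives $s_n^0=T_{-1,0}^{n}([\id,1])$.

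The cleanest route is to apply Lemma \ref{snk like terms} with $k=0\neq q-1$ (using $q>3$) and constant $c=1$. The sum $\sum_{\mu\in I_n}\mu_{n-2}^0[g^0_{n,\mu},1]$ is exactly $s_n^0$, so the lemma gives $s_n^0\equiv 0$ directly.

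Alternatively, one can argue with the operators, as in the proof for $t_n^{q-1}$:
\[
T_{-1,0}^2(x)\equiv -T_{1,0}T_{-1,0}(x)=-T_{1,0}T_{1,0}T_{1,2}T_{1,0}(x)=-T_{1,2}T_{1,0}(x)\equiv 0 .
\]
Here the first step is Lemma \ref{L1}(ii), the second is \eqref{T-10 in the non-commutative case}, the third is $T_{1,0}^2=\Id$, and the last term lies in $\im T_{1,2}$. So $s_n^0=T_{-1,0}^{n-2}\big(T_{-1,0}^2([\id,1])\big)$, and it remains to push the congruence through the outer operators.

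The one point to be careful about is that the congruence is preserved under $T_{-1,0}$ and $T_{1,0}$. This holds because these operators are $G$-equivariant and preserve the subspace $(\im T_{1,2},\Ker T_{1,2})$, as is implicitly used throughout (the quotient $\tau$ is a module over the Iwahori–Hecke algebra by \cite[Theorem 3]{CS25}). If one prefers to avoid this, the first route via Lemma \ref{snk like terms} already finishes (ii), and (i) is the direct computation above.
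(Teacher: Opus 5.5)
Part (i) is the paper's argument. For (ii), your primary route through Lemma~\ref{snk like terms} (with $k=0$ and $c\equiv 1$) is correct and not circular. That lemma only invokes the case $k\neq q-1$ of Lemma~\ref{tnk like terms}, which rests on the explicit identity $T_{1,2}(s_1^0)=-t_1^0$. Also, $0\neq q-1$ for every $q$, so $q>3$ plays no role here. The paper instead stays inside the Hecke algebra: it writes $s_n^0=T_{-1,0}\bigl(T_{-1,0}^{n-1}[\id,1]\bigr)$, replaces the outer $T_{-1,0}$ by $-T_{1,0}$ via Lemma~\ref{L1}(ii), and then uses $T_{1,0}T_{-1,0}=T_{1,2}T_{1,0}$, which follows from \eqref{T-10 in the non-commutative case} and $T_{1,0}^2=\Id$. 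Unwinding your route gives $s_n^0\equiv -\sum_{\nu\in I_{n-2}} g^0_{n-2,\nu}\,t_1^0$. So you trade the abstract relation for the explicit coset computation behind $t_1^0\in{\im} T_{1,2}$. The paper's version is shorter; yours reuses a lemma already proved.

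The justification you give for the ``alternative'' is false, however. $T_{1,0}$ and $T_{-1,0}$ do not preserve $({\im} T_{1,2},{\Ker} T_{1,2})$, and $\tau$ is not a quotient module for the Iwahori--Hecke algebra. For example, $x=(T_{1,0}+T_{-1,0})[\id,1]$ lies in ${\Ker} T_{1,2}$ by Lemma~\ref{L1}(ii), but
\[
T_{1,0}x=[\id,1]+T_{1,0}T_{-1,0}[\id,1]=[\id,1]+T_{1,2}T_{1,0}[\id,1]\equiv[\id,1]\not\equiv 0 .
\]
The last step holds because the subspace is $G$-stable, $[\id,1]$ generates $\IZind\mathbbm{1}$, and $\tau\neq 0$. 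If $T_{1,0}$ did descend, it would give a non-scalar endomorphism of $\tau$ sending $[\id,1]$ to $[\beta,1]$, contrary to Theorem~\ref{endo-algebra}. The paper never uses such stability: every congruence there is applied to the outermost operator.

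The fix is immediate. Your chain $T_{-1,0}^2(x)\equiv -T_{1,0}T_{-1,0}(x)=-T_{1,2}T_{1,0}(x)$ is valid for arbitrary $x$. So take $x=T_{-1,0}^{n-2}[\id,1]$ and write $s_n^0=T_{-1,0}^2(x)$, using that powers of $T_{-1,0}$ commute. Then nothing needs to be pushed through, and this is exactly the paper's proof. Since your primary route is self-contained, the proposal stands once that false claim is deleted or replaced by this reordering.
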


\begin{proof}
It is easy to see that $s_1^0=T_{-1,0}\left(\left[\id,1\right]\right) \equiv -\left[\beta,1 \right] \mod{({\im}T_{1,2}, {\Ker}{T_{1,2}})}.$

For $n \geq 2$, we have
\[s_n^0 = T_{-1,0}^n\left(\left[\id,1\right]\right)
   =  T_{-1,0} \left(T_{-1,0}^{n-1}\left(\left[\id,1\right]\right)\right).\]
Using Lemma \ref{L1}(ii), the last expression reduces to
$-T_{1,0} \left(T_{-1,0}^{n-1}\left(\left[\id,1\right]\right)\right)$, which is congruent to $0 \mod{({\im}T_{1,2}, {\Ker}{T_{1,2}})}$ by \eqref{T-10 in the non-commutative case}.
\end{proof}

Next, we show that most of the functions $s_n^k$ are not $I(1)$-invariant.

\begin{lemma}\label{snk not invariant}
    Let $n \geq 1$. The function $s_n^k$ is not $I(1)$-invariant if $2 \leq k \leq q - 1$ and $k$ is not a power of $p$.
\end{lemma}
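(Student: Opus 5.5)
The plan is to exhibit an explicit element of $I(1)$ that moves $s_n^k$, and to show that the difference is nonzero in $\tau$ by transporting it to $\pi_0$ and applying Lemma \ref{functions are 0}. Throughout I read the statement as being about the image of $s_n^k$ in $\tau = \IZind \mathbbm{1}/({\im} T_{1,2}, {\Ker} T_{1,2})$, since that is the space whose invariants Theorem \ref{pro-p-iwahori-invariants-intro} describes.

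\emph{Step 1: the test element.} For $a \in \Fq$ put $u_a = \begin{pmatrix}1 & [a]\varpi^{n-1} \\ 0 & 1\end{pmatrix}$, which lies in $I(1)$. We have $u_a g^0_{n,\mu} = g^0_{n,\mu + [a]\varpi^{n-1}}$. Modulo $\varpi^n$, the element $\mu + [a]\varpi^{n-1}$ agrees with the element of $I_n$ having the same digits $\mu_0,\dots,\mu_{n-2}$ and last digit $\mu_{n-1}+a$. This is because $[\mu_{n-1}]+[a] \equiv [\mu_{n-1}+a] \bmod \varpi$, and the discrepancy can be absorbed on the right by an element of $I(1)$, as in the proof of Lemma \ref{tnk like terms}. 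Reindexing the sum by $\mu_{n-1} \mapsto \mu_{n-1} - a$ gives
\[
u_a s_n^k - s_n^k = \sum_{\mu \in I_n}\bigl((\mu_{n-1}-a)^k - \mu_{n-1}^k\bigr)[g^0_{n,\mu},1] = \sum_{j=0}^{k-1}\binom{k}{j}(-a)^{k-j}\, s_n^j .
\]

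\emph{Step 2: a nonzero coefficient.} Since $k \geq 2$ and $k$ is not a power of $p$, Lucas' theorem gives some $1 \leq j \leq k-1$ with $\binom{k}{j} \not\equiv 0 \pmod p$. Consequently the polynomial $P_a(x) = (x-a)^k - x^k$ has a nonzero coefficient of $x^j$ for any $a \neq 0$. As a polynomial in $x$ it has degree at most $k-1 \leq q-2$. A polynomial of degree at most $q-1$ vanishing identically on $\Fq$ is zero, so $x \mapsto P_a(x) - P_a(0)$ is a nonconstant function on $\Fq$.

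\emph{Step 3: nonvanishing in $\tau$.} The difference $u_a s_n^k - s_n^k$ is the function supported on the edges $[g^0_{n,\mu},1]$ with coefficient $P_a(\mu_{n-1})$. I would pass to $\pi_0 = \KZind \sigma_0/{\im} T$ via the isomorphism of \cite[Theorem 3]{CS25}. As used in the proof of Lemma \ref{tnk like terms}, this isomorphism sends $[g^0_{n,\mu},1]$ to the characteristic function of the vertex $g^0_{n,\mu}KZ$. The image is then a function supported on the vertices of radius exactly $n$. Its values on the $q$ vertices $g^0_{n,\mu}$ sharing the common neighbour $g^0_{n-1,[\mu]_{n-1}}$ are $P_a(\mu_{n-1})$, which are not all equal by Step 2. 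By Lemma \ref{functions are 0}, this function is not in ${\im} T$. Hence $u_a s_n^k \neq s_n^k$ in $\tau$, and $s_n^k$ is not $I(1)$-invariant.

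\emph{Main obstacle.} I expect the main obstacle to be justifying the transport in Step 3. One must check that the Cheng–Soneji isomorphism really sends $[g^0_{n,\mu},1]$ to the vertex $g^0_{n,\mu}KZ$ at the same radius, with no correction terms landing on other vertices. One must also check that Lemma \ref{functions are 0} applies with exact support in $B(n)\setminus B(n-1)$, including the case $n=1$, where the common neighbour is the root vertex. If that identification is only valid up to terms at lower radius, I would instead argue directly. A preimage under $T$ would have to be supported in $B(n-1)$ and would take a single value on each vertex of radius $n-1$; this contradicts the nonconstant values $P_a(\mu_{n-1})$ along any fibre over a fixed $[\mu]_{n-1}$.
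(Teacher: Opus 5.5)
Your proposal is correct and is essentially the paper's proof. The paper uses the single element $\begin{pmatrix}1 & -\varpi^{n-1}\\ 0 & 1\end{pmatrix}$, which is essentially your case $a=-1$, transports the difference to $\KZind \sigma_0/{\im} T$ via \cite[Theorem 3]{CS25}, and applies Lemma \ref{functions are 0}; your Lucas-theorem step supplies the nonconstancy of $(x-a)^k-x^k$ that the paper only asserts. The obstacle you anticipate does not arise: that isomorphism is $G$-equivariant and sends $[\id,1]\mapsto[\id,1]$, so it sends $[g^0_{n,\mu},1]=g^0_{n,\mu}\cdot[\id,1]$ to the vertex function $[g^0_{n,\mu},1]$ with no correction terms.
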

\begin{proof}
    A short computation shows that
    \begin{eqnarray*}
        \begin{pmatrix}1 & -\varpi^{n - 1} \\ 0 & 1\end{pmatrix} \cdot s_n^k - s_n^k & = & \sum_{\mu \in I_n} \left((\mu_{n - 1} + 1)^k - \mu_{n - 1}^k\right)[g^0_{n, \mu}, 1].
    \end{eqnarray*}
    This difference is in $({\im} T_{1,2} , {\Ker} T_{1,2})$ if and only if the function
    \[
        d := \sum_{\mu \in I_n}\left((\mu_{n - 1} + 1)^k - \mu_{n - 1}^k\right)[g^0_{n, \mu}, 1] \in \KZind \sigma_0
    \]
    belongs to ${\im} T$ (see \cite[Theorem 3]{CS25}). 
    Using Lemma~\ref{functions are 0}, this means $(\mu_{n - 1} + 1)^k - \mu_{n - 1}^k$ is constant with respect to $\mu_{n - 1}$, which is not true if $2 \leq k \leq q - 1$ and $k$ is not a power of $p$.
\end{proof}

\subsection{The pro-$p$-Iwahori invariants}\label{pro-p-invarints}
In this section, we analyze the functions $s_n^{p^l}$ for $n \geq 1$ and $0 \leq l \leq f - 1$. It turns out that they are $I(1)$-invariant for $n\geq 2$ and are not for $n=1$.
\begin{remark}
    Note that
    \[
        \begin{pmatrix}
            1 & -1\\
            0 & 1
        \end{pmatrix} \cdot s_1^{p^l} - s_1^{p^l} = s_1^0,
    \]
which is congruent to $[-\beta, 1]$ modulo $({\im} T_{1, 2}, {\Ker} T_{1, 2})$ by 
Lemma \ref{info about sn0}. Therefore $s_1^{p^l}$ is not $I(1)$-invariant modulo $({\im} T_{1,2} , {\Ker} T_{1,2})$ for any $0 \leq l \leq f - 1$.
\end{remark}

Next we show that $s_n^{p^l}$ is $I(1)$-invariant for $n\geq 2$ and $0\leq l \leq f-1$. We will use the identity: for any $a, b, c , d \in \mathcal{O}_F$ and $\lambda \in \Fq$,
    \begin{eqnarray}\label{Arindam's identity stolen}
        \begin{pmatrix}
            1 + \varpi a & b \\ \varpi c & 1 + \varpi d
        \end{pmatrix}
        \begin{pmatrix}
            \varpi & [\lambda] \\ 0 & 1
        \end{pmatrix}
        =
        \begin{pmatrix}
            \varpi & [\lambda + b_0] \\ 0 & 1
        \end{pmatrix}i,
    \end{eqnarray}
    where $b_0$ is the $0$-th $\varpi$-adic digit of $b$ and where $i \in I(1)$.

\begin{lemma}\label{inductive-step}
   Let $n \geq 1$. If $s_{n-1}^k$ is $I(1)$-invariant modulo $({\im} T_{1,2} , {\Ker} T_{1,2})$, then $s_n^k$ is also $I(1)$-invariant modulo $({\im} T_{1,2} , {\Ker} T_{1,2})$.
\end{lemma}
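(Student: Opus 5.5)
We write $s_n^k$ in terms of $s_{n-1}^k$ and then use the $G$-equivariance of the quotient map. Put $h_\lambda := g^0_{1,[\lambda]} = \begin{pmatrix}\varpi & [\lambda]\\ 0 & 1\end{pmatrix}$ for $\lambda \in \Fq$. Every $\mu \in I_n$ can be written as $\mu = [\lambda] + \varpi \mu'$ with $\lambda = \mu_0$ and $\mu' \in I_{n-1}$. Then $g^0_{n,\mu} = h_\lambda\, g^0_{n-1,\mu'}$, and the top digit of $\mu$ is $\mu_{n-1} = \mu'_{n-2}$. This gives the decomposition
\[
s_n^k = \sum_{\lambda \in \Fq} h_\lambda \cdot s_{n-1}^k .
\]
This needs $n \geq 2$, so that $s_{n-1}^k$ is defined; the case $n = 1$ is vacuous or conventional, and we treat it separately if needed.

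Since $I(1)$ is generated by its upper unipotent part, its lower part $\begin{pmatrix}1&0\\ \varpi c&1\end{pmatrix}$ and its diagonal part $\begin{pmatrix}1+\varpi a&0\\0&1+\varpi d\end{pmatrix}$, it is enough to take an arbitrary $g = \begin{pmatrix}1+\varpi a & b\\ \varpi c & 1+\varpi d\end{pmatrix} \in I(1)$. Identity \eqref{Arindam's identity stolen} gives $g h_\lambda = h_{\lambda + b_0}\, i_\lambda$ with $i_\lambda \in I(1)$. Therefore
\[
g \cdot s_n^k = \sum_{\lambda} h_{\lambda+b_0}\, i_\lambda \cdot s_{n-1}^k .
\]

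By hypothesis $i_\lambda \cdot s_{n-1}^k - s_{n-1}^k \in ({\im} T_{1,2}, {\Ker} T_{1,2})$. This subspace is $G$-stable, because $T_{1,2}$ is $G$-equivariant, so its image and kernel are subrepresentations. Hence $h_{\lambda+b_0} i_\lambda \cdot s_{n-1}^k \equiv h_{\lambda + b_0}\cdot s_{n-1}^k$. Reindexing by $\lambda \mapsto \lambda - b_0$ then gives $g\cdot s_n^k \equiv s_n^k$, which is the claim.

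The main obstacle is checking that the decomposition and identity \eqref{Arindam's identity stolen} hold exactly in $\IZind \mathbbm 1$. The element $i_\lambda$ depends on $\lambda$, and it must be absorbed on the right into the argument of $s_{n-1}^k$. This is legitimate because $i_\lambda$ acts on the vector $s_{n-1}^k$ on the left, via $[h\, i\, x, 1] = h\cdot(i\cdot[x,1])$. So we only need the $I(1)$-invariance of $s_{n-1}^k$ modulo the $G$-stable subspace, and no finer control of $i_\lambda$. A second point to check is the multiplicative representatives: $[\lambda + b_0]$ differs from $[\lambda]+[b_0]$, but identity \eqref{Arindam's identity stolen} already absorbs the discrepancy into $i_\lambda$, so the reindexing over $\lambda \in \Fq$ is a bijection.
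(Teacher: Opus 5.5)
Your proof is correct and is essentially the argument the paper intends. The paper only cites \cite[Lemma 4.4]{Jan23}, whose proof is exactly the decomposition $s_n^k=\sum_{\lambda\in\Fq}g^0_{1,[\lambda]}\,s_{n-1}^k$, combined with the identity $g\,g^0_{1,[\lambda]}=g^0_{1,[\lambda+b_0]}\,i_\lambda$ (with $i_\lambda\in I(1)$) and the $G$-stability of $({\im} T_{1,2},{\Ker} T_{1,2})$. As you noticed, the statement only makes sense for $n\geq 2$, since there is no $s_0^k$ (and $s_1^{p^l}$ is in fact not invariant), so nothing further is needed in the case $n=1$.
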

\begin{proof}
   The proof of the lemma is similar to that of \cite[Lemma 4.4]{Jan23}.
\end{proof}

Using Lemma \ref{inductive-step}, it is enough to check the invariance for $s_2^{p^l}$ under $I(1)$ for $0\leq l \leq f-1$.

\begin{proposition}\label{sn1 are invariant}
Let $e$ and $f$ denote the ramification index and residue degree of the $p$-adic number field $F$, respectively. Assume $ef > 1$ and that $q > 3$. Then, for all $g \in I(1)$, we have 
   $$g \cdot s_{2}^{p^l}-s_{2}^{p^l} \in ({\im}T_{1,2},{\Ker}T_{1,2}),$$
    for all $0 \leq l \leq f-1$. 
    
\end{proposition}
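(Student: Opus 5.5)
Since $I(1)$ is generated by the lower-triangular unipotents $\begin{pmatrix}1&0\\ \varpi c&1\end{pmatrix}$, the diagonal elements $\begin{pmatrix}1+\varpi a&0\\0&1+\varpi d\end{pmatrix}$ and the upper unipotents $\begin{pmatrix}1&b\\0&1\end{pmatrix}$, we will check invariance of $s_2^{p^l}$ on each type separately.

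\emph{Upper unipotents.} Write $b=[b_0]+[b_1]\varpi+\cdots$. Acting on $[g^0_{2,\mu},1]$ with $\mu=[\mu_0]+[\mu_1]\varpi$ gives $g^0_{2,\mu'}$, where $\mu'_0=\mu_0+b_0$ and $\mu'_1=\mu_1+b_1+\gamma(\mu_0,b_0)$. Here $\gamma$ is the Witt-vector carry polynomial, which has degree $<q$ in each variable by Lemma~\ref{Every fun is a poly}; when $e>1$ the carry lands in a higher digit and $\gamma=0$. Reindexing, we get
\[
g\cdot s_2^{p^l}-s_2^{p^l}=\sum_{\mu\in I_2}\Bigl(\bigl(\mu_1-b_1-\gamma(\mu_0-b_0,b_0)\bigr)^{p^l}-\mu_1^{p^l}\Bigr)[g^0_{2,\mu},1].
\]
Since $x\mapsto x^{p^l}$ is additive, the coefficient equals $-b_1^{p^l}-\gamma(\mu_0-b_0,b_0)^{p^l}$, which is independent of $\mu_1$. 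The difference therefore has the shape $\sum_\mu c(\mu_0)[g^0_{2,\mu},1]$, where $c$ is a polynomial in $\mu_0$ of degree $\le q-1$. Expanding $c$ in monomials $\mu_0^k$ and applying Lemma~\ref{snk like terms} with $n=2$, every monomial with $k\ne q-1$ dies, and the $\mu_0^{q-1}$ term gives a multiple of $[\id,1]$. We must show this coefficient vanishes. Equivalently, $\sum_{\mu_0}c(\mu_0)=0$ in $\F_q$: the constant $b_1^{p^l}$ summed $q$ times gives $0$, and we need $\sum_{x\in\F_q}\gamma(x,b_0)^{p^l}=0$.

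This last identity is the main obstacle. For $e>1$ it is trivial because $\gamma=0$. For $e=1$ (so $f>1$), $\gamma(x,y)=\bigl(x^{p^{-1}}... \bigr)$ is the first Witt carry $-\sum_{0<i<p}\tfrac1p\binom pi X^{i/p}Y^{(p-i)/p}$ evaluated at $p$-th roots; its $p^l$-th power is a polynomial in $x$ of degree at most roughly $(p-1)p^{l-1}\le q-2$ in $x$. The hypotheses $q>3$ and $f>1$ ensure that no exponent reaches $q-1$, so by \eqref{sum of powers of roots of unity} the sum vanishes. The degree bookkeeping here, in particular the case $l=0$ where the exponent involves $p^{-1}$ and must be read as $p^{f-1}$, is where care is needed.

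\emph{Lower unipotents and diagonal elements.} Here we use identity \eqref{Arindam's identity stolen} twice. Since $g^0_{2,\mu}=g^0_{1,[\mu_0]}\begin{pmatrix}\varpi&[\mu_1]\\0&1\end{pmatrix}$, we move the element of $I(1)$ past each factor. For a diagonal element $g$, the first step gives $b=0$, so $\mu_0$ is unchanged. The conjugated element then has a $\varpi$-divisible off-diagonal entry, so $\mu_1$ is unchanged as well; hence $g$ fixes each $[g^0_{2,\mu},1]$. For a lower unipotent, the conjugate $\begin{pmatrix}\varpi&[\mu_0]\\0&1\end{pmatrix}^{-1}g\begin{pmatrix}\varpi&[\mu_0]\\0&1\end{pmatrix}$ has upper-right entry divisible by $\varpi$, possibly up to a $\varpi^2$ shift. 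This gives at most a shift in $\mu_1$ by a polynomial in $\mu_0$, which is handled exactly as above. Alternatively, we can invoke the same argument as in Lemma~\ref{inductive-step}, which reduces the lower-triangular case to the already-known invariance of $[\id,1]$.

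Combining the two parts, $g\cdot s_2^{p^l}-s_2^{p^l}\in({\im}T_{1,2},{\Ker}T_{1,2})$ for every generator $g$ of $I(1)$, and hence for every $g\in I(1)$.
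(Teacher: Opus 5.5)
Your route is the paper's: split $I(1)$ into diagonal, upper- and lower-unipotent generators, commute each past $g^0_{1,[\mu_0]}$ via \eqref{Arindam's identity stolen}, reindex, and kill the leftover with Lemma~\ref{snk like terms}. Your upper-unipotent case is correct. The criterion $\sum_{x\in\F_q}\gamma(x,b_0)^{p^l}=\bigl(\sum_{x}\gamma(x,b_0)\bigr)^{p^l}=0$ holds because the carry has $x$-degree at most $(p-1)p^{f-1}<q-1$. That uses $f>1$, not $q>3$.

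The gap is in the other two cases, where your conjugation claims are false. In fact
\[
(g^0_{1,[\mu_0]})^{-1}\begin{pmatrix}1+\varpi a&0\\0&1+\varpi d\end{pmatrix}g^0_{1,[\mu_0]}=\begin{pmatrix}1+\varpi a&(a-d)[\mu_0]\\0&1+\varpi d\end{pmatrix},
\]
\[
(g^0_{1,[\mu_0]})^{-1}\begin{pmatrix}1&0\\ \varpi c&1\end{pmatrix}g^0_{1,[\mu_0]}=\begin{pmatrix}1-\varpi c[\mu_0]&-c[\mu_0]^2\\ \varpi^2c&1+\varpi c[\mu_0]\end{pmatrix},
\]
and neither upper-right entry is divisible by $\varpi$.

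\emph{Diagonal case.} A diagonal element does not fix the individual $[g^0_{2,\mu},1]$. It shifts $\mu_1$ by $(a_0-d_0)\mu_0$ and leaves $\sum_{\mu}\bigl(-(a_0-d_0)\mu_0\bigr)^{p^l}[g^0_{2,\mu},1]$. This dies only because $\sum_{x\in\F_q}x^{p^l}=0$, i.e.\ because $q>2$; for $q=2$ and $a_0\neq d_0$ it equals $[\id,1]$.

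\emph{Lower-unipotent case.} A lower unipotent shifts $\mu_1$ by $-c_0\mu_0^2$ and leaves $\sum_{\mu}(c_0\mu_0^2)^{p^l}[g^0_{2,\mu},1]$. By your own criterion this vanishes iff $\sum_{x\in\F_q}x^{2}=0$, i.e.\ iff $q-1\nmid 2$, i.e.\ iff $q>3$. This is the one place the hypothesis $q>3$ is needed, and the conclusion genuinely fails for $q=3$ (Remark~\ref{q = 2, 3}). So ``handled exactly as above'' does not suffice: a shift by an arbitrary polynomial in $\mu_0$ need not die, and the carry-degree bound from the upper-unipotent case says nothing about $\mu_0^{2p^l}$.

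\emph{Your fallback.} The appeal to Lemma~\ref{inductive-step} is also invalid. At $n=2$ that lemma presupposes $I(1)$-invariance of $s_1^{p^l}$, which is false, since $\begin{pmatrix}1&-1\\0&1\end{pmatrix}s_1^{p^l}-s_1^{p^l}=s_1^0\equiv-[\beta,1]$. In any case, no argument that avoids using $q>3$ can be correct here.

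To close the gap, compute these two conjugates explicitly and check the two power sums $\sum_x x^{p^l}$ and $\sum_x x^{2p^l}$.
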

\begin{proof}
  We check the invariance under the following matrices:
$$\begin{pmatrix}
      1+\varpi a  & 0 \\
      0 & 1 
  \end{pmatrix}, \begin{pmatrix}
      1  & 0 \\
      \varpi c & 1
  \end{pmatrix}, \text{ and } \begin{pmatrix}
      1  & b \\
      0 & 1 
  \end{pmatrix},$$
  where $a,b,c \in  \mathcal{O}_F$.
 We begin by computing,
  \begin{eqnarray*}
 \begin{pmatrix}
      1+\varpi a  & 0 \\
      0 & 1 
  \end{pmatrix} \cdot s_{2}^{p^l}  & = & \sum_{\mu \in I_2}\mu_{1}^{p^l}\left[\begin{pmatrix}
      1+\varpi a  & 0 \\
      0 & 1 
  \end{pmatrix} \begin{pmatrix} \varpi^{2} & \mu  \\ 0 & 1\end{pmatrix}, 1\right] \\
  & = &  \sum_{\mu \in I_2}\mu_{1}^{p^l}\left[\begin{pmatrix}
      1+\varpi a  & 0 \\
      0 & 1 
  \end{pmatrix} \begin{pmatrix} \varpi & [\mu_{0}]  \\ 0 & 1\end{pmatrix}\begin{pmatrix} \varpi & [\mu_{1}]  \\ 0 & 1\end{pmatrix}, 1\right]\\
  & = & \sum_{\mu \in I_2}\mu_{1}^{p^l}\left[ \begin{pmatrix} \varpi & [\mu_{0}]  \\ 0 & 1\end{pmatrix}\begin{pmatrix}
      1+\varpi a  & a[\mu_{0}] \\
      0 & 1 
  \end{pmatrix}\begin{pmatrix} \varpi & [\mu_{1}]  \\ 0 & 1\end{pmatrix}, 1\right],
\end{eqnarray*}
which, using \eqref{Arindam's identity stolen}:
$$\begin{pmatrix}
      1+\varpi a  & a[\mu_{0}] \\
      0 & 1 
  \end{pmatrix}\begin{pmatrix} \varpi & [\mu_{1}]  \\ 0 & 1\end{pmatrix}= \begin{pmatrix}
      \varpi & [\mu_{1}+a_{0}\mu_{0}]\\
      0 & 1 
  \end{pmatrix}h,$$
  for some $h \in I(1)$ and making the substitution $\mu_{1}-a_{0}\mu_{0}$, equals 
  \begin{eqnarray*}
        s_{2}^{p^l}+\sum_{\mu \in I_2}(-a_{0}\mu_{0})^{p^l}\left[\begin{pmatrix} \varpi^{2} & \mu  \\ 0 & 1\end{pmatrix}, 1\right]. 
  \end{eqnarray*}
  Applying Lemma \ref{snk like terms}, we get
  $\begin{pmatrix}
      1+\varpi a  & 0 \\
      0 & 1 
  \end{pmatrix} \cdot s_{2}^{p^l} - s_{2}^{p^l} \in ({\im} T_{1,2} , {\Ker} T_{1,2})$ if $q > 2$. If $q = 2$, then this difference is equal to $[\id, 1]$, thereby showing that $s_2^{p^l}$ is not $I(1)$-invariant modulo $({\im} T_{1, 2}, {\Ker} T_{1, 2})$.\\
   We now verify invariance under the lower triangular unipotent elements in $I(1)$:
\begin{eqnarray*}
 \begin{pmatrix}
      1 & 0 \\
      \varpi c & 1 
  \end{pmatrix} \cdot s_{2}^{p^l} & = & \sum_{\mu \in I_2}\mu_{1}^{p^l}\left[\begin{pmatrix}
      1 & 0 \\
      \varpi c & 1 
  \end{pmatrix} \begin{pmatrix} \varpi^{2} & [\mu_{0}]+[\mu_1]\varpi  \\ 0 & 1\end{pmatrix}, 1\right]\\
  &=& \sum_{\mu \in I_2}\mu_{1}^{p^l}\left[\begin{pmatrix}
      1 & 0 \\
      \varpi c & 1 
  \end{pmatrix} \begin{pmatrix} \varpi & [\mu_{0}]  \\ 0 & 1\end{pmatrix}\begin{pmatrix} \varpi & [\mu_1]  \\ 0 & 1\end{pmatrix}, 1\right]\\
  &=& \sum_{\mu \in I_2}\mu_{1}^{p^l}\left[ \begin{pmatrix} \varpi & [\mu_{0}]  \\ 0 & 1\end{pmatrix} \begin{pmatrix}
      1-\varpi c[\mu_{0}] & -[\mu_{0}^2]c \\
      \varpi^2 c & 1+\varpi c[\mu_{0}] 
  \end{pmatrix}\begin{pmatrix} \varpi & [\mu_1]  \\ 0 & 1\end{pmatrix}, 1\right]\\
  &=& \sum_{\mu \in I_2}\mu_{1}^{p^l}\left[ \begin{pmatrix} \varpi & [\mu_{0}]  \\ 0 & 1\end{pmatrix} \begin{pmatrix}
      \varpi & [\mu_{1}-\mu_{0}^2c_{0}] \\
      0 & 1 
  \end{pmatrix}i, 1\right],
\end{eqnarray*}
for some $i \in I(1)$, by \eqref{Arindam's identity stolen}. Making the change of variable $\mu_{1} \mapsto \mu_{1}+c_{0}\mu_{0}^2$, we obtain
\begin{eqnarray*}
  \begin{pmatrix}
      1 & 0 \\
      \varpi c & 1 
  \end{pmatrix} \cdot s_{2}^{p^l}&=&\sum_{\mu \in I_2}(\mu_{1}+c_{0}\mu_{0}^2)^{p^l}\left[\begin{pmatrix} \varpi^{2} & \mu  \\ 0 & 1\end{pmatrix}, 1 \right]\\ 
  &=& s_{2}^{p^l}+\sum_{\mu \in I_2}(c_{0}\mu_{0}^2)^{p^l}\left[\begin{pmatrix} \varpi^{2} & \mu  \\ 0 & 1\end{pmatrix}, 1 \right].
\end{eqnarray*}
Applying Lemma \ref{snk like terms} once more, we conclude (for $q > 3$) that
\[
\begin{pmatrix}
      1 & 0 \\
      \varpi c & 1 
  \end{pmatrix} \cdot s_{2}^{p^l}-s_{2}^{p^l} \in ({\im} T_{1,2} , {\Ker} T_{1,2}) .
\]
Now, we compute
\begin{eqnarray*}
 \begin{pmatrix}
      1 & b \\
      0 & 1 
  \end{pmatrix} \cdot s_{2}^{p^l} & = & \sum_{\mu \in I_2}\mu_{1}^{p^l}\left[\begin{pmatrix}
      1 & b \\
      0 & 1 
  \end{pmatrix} \begin{pmatrix} \varpi^{2} & [\mu_{0}]+[\mu_1]\varpi  \\ 0 & 1\end{pmatrix}, 1\right]\\
  &=& \sum_{\mu \in I_2}\mu_{1}^{p^l}\left[\begin{pmatrix} \varpi^{2} & [\mu_{0}]+[\mu_1]\varpi+b  \\ 0 & 1\end{pmatrix}, 1\right]\\
  &=& \sum_{\mu \in I_2}\mu_{1}^{p^l}\left[\begin{pmatrix} \varpi & [\mu_{0}+b_0]  \\ 0 & 1\end{pmatrix}\begin{pmatrix} \varpi & [\mu_1+b_1+Z]  \\ 0 & 1\end{pmatrix}i, 1\right],
\end{eqnarray*}
for some $i \in I(1)$, where the last equality follows from \eqref{Arindam's identity stolen}. Here
\[
Z=\begin{cases}
      0, &  \text{if } e>1, \\
      \dfrac{\mu_0^{q}+b_0^{q}-(\mu_{0}+b_0)^q}{p}, & \text{if } e=1.
  \end{cases}
\] \\
Next, we perform the change of variables
$$\mu_{0} \rightarrow \mu_{0}-b_0 \quad  \text{and} \quad \mu_{1} \rightarrow \mu_{1}-b_1-Z,$$
which yields
\begin{eqnarray*}
 \sum_{\mu \in I_2}(\mu_{1}-b_1-Z)^{p^l}\left[\begin{pmatrix} \varpi^2 & \mu  \\ 0 & 1\end{pmatrix}, 1\right]
 &= & s_{2}^{p^l}+\sum_{\mu \in I_2}(-b_1-Z)^{p^l}\left[\begin{pmatrix} \varpi^2 & \mu  \\ 0 & 1\end{pmatrix}, 1\right].
\end{eqnarray*}
If $e > 1$, then $Z = 0$ and therefore the largest power of $\mu_0$ appearing in the coefficient is at most $0$. If $e = 1$, then $f > 1$ and therefore the largest power of $\mu_0$ appearing in the coefficient is $(p - 1)p^{f - 1} < q - 1$. In both cases, Lemma~\ref{snk like terms} applies, and we get
\[
\begin{pmatrix}
      1 & b \\
      0 & 1
\end{pmatrix} \cdot s_{2}^{p^l}-s_2^{p^l} \in ({\im} T_{1,2}, {\Ker} T_{1,2}).
\]
This completes the proof of the proposition.
\end{proof}

Next, we show that the functions $s^{p^l}_n$ for $n \geq 2$ and $0 \leq l \leq f - 1$ are new $I(1)$-invariants in addition to the obvious ones, namely $[\id, 1]$ and $[\beta, 1]$.

\begin{lemma}\label{eigenvalues of sn}
    The $I$-eigencharacter of $s_n^{p^l}$ for $n \geq 2$ and $0 \leq l \leq f - 1$ is $(d/a)^{p^l}$.
\end{lemma}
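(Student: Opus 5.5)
The plan is to compute the action of the torus $H$ on $s_n^{p^l}$ directly in $\IZind \mathbbm{1}$. The Iwahori subgroup factors as $I = H \cdot I(1)$. By Proposition~\ref{sn1 are invariant} together with Lemma~\ref{inductive-step}, the image of $s_n^{p^l}$ in $\tau$ is $I(1)$-invariant for $n \geq 2$, so $I$ acts on it through $I/I(1) \simeq H$. It therefore suffices to show that for
\[
h = \begin{pmatrix}[\lambda] & 0 \\ 0 & [\mu]\end{pmatrix} \in H
\]
one has $h \cdot s_n^{p^l} = (\mu/\lambda)^{p^l} s_n^{p^l}$. I expect this to hold exactly in $\IZind \mathbbm{1}$, with no reduction modulo $({\im} T_{1,2}, {\Ker} T_{1,2})$ needed.

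The key step is the conjugation identity
\[
h\, g^0_{n,\nu} = \begin{pmatrix}[\lambda]\varpi^n & [\lambda]\nu \\ 0 & [\mu]\end{pmatrix} = \begin{pmatrix}\varpi^n & [\lambda\mu^{-1}]\nu \\ 0 & 1\end{pmatrix} h .
\]
Because Teichm\"uller lifts are multiplicative, for $\nu = \sum_{i<n}[\nu_i]\varpi^i \in I_n$ we have $[\lambda\mu^{-1}]\nu = \sum_{i<n} [\lambda\mu^{-1}\nu_i]\varpi^i$, which again lies in $I_n$ and has digits $\lambda\mu^{-1}\nu_i$. Since $h \in IZ$ and the inducing character is trivial, $[g h, 1] = [g,1]$. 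Hence
\[
h\cdot s_n^{k} = \sum_{\nu\in I_n} \nu_{n-1}^k \,[g^0_{n,\lambda\mu^{-1}\nu},1].
\]

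Next I would reindex by the bijection $\nu \mapsto \lambda\mu^{-1}\nu$ of $I_n$, applied digitwise. This turns the coefficient $\nu_{n-1}^k$ into $(\mu\lambda^{-1})^k \nu_{n-1}^k$, so $h \cdot s_n^k = (\mu/\lambda)^k s_n^k$ for every $k$. Taking $k = p^l$ and recalling that $a^r d^s(h) = \lambda^r\mu^s$, the eigencharacter is $a^{-p^l}d^{p^l} = (d/a)^{p^l}$.

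There is no real obstacle here. The only points needing care are the direction of the character convention ($\mu/\lambda$ versus $\lambda/\mu$) and the remark that $I(1)$-invariance, already established for $n\ge 2$, is what makes the $H$-eigencharacter an $I$-eigencharacter of the image in $\tau$.
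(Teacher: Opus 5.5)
Your proposal is correct and is essentially the paper's argument: the paper also uses the identity $t\,g^0_{n,\nu}=g^0_{n,(a/d)\nu}\,t$ for a diagonal $t$ and then rescales the digits to get the factor $(d/a)^{p^l}$. You also add an explicit remark, which the paper leaves implicit: the exact $H$-computation in $\IZind \mathbbm{1}$ yields an $I$-eigencharacter on the image in $\tau$ only because of the $I(1)$-invariance from Proposition~\ref{sn1 are invariant} and Lemma~\ref{inductive-step}.
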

\begin{proof}
    Let $t = \begin{pmatrix}a & 0 \\ 0 & d\end{pmatrix} \in I$. Note that
    \[
        t g^0_{n, \lambda} = g^0_{n, a\lambda/d}t.
    \]
    Making the change of coordinates $\mu_{m} \mapsto d\mu_{m}/a$, we see that
    \[
        t \cdot s_n^{p^l} = (d/a)^{p^l}s_n^{p^l}.\qedhere
    \]
\end{proof}

\begin{proposition}\label{linear independence lemma}
    The set $\left\{[\id, 1], [\beta, 1], s_n^{p^l}, \beta s_n^{p^l}\right\}_{\substack{n \geq 2 \\ 0 \leq l \leq f - 1}}$ is linearly independent.
\end{proposition}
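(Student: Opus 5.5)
Linear independence is meant in $\tau$, i.e.\ modulo $({\im} T_{1,2}, {\Ker} T_{1,2})$. By \cite[Theorem 3]{CS25} this amounts to a statement about $\pi_0 = \KZind \sigma_0/{\im} T$. The plan is to decompose a putative relation according to $H$-eigencharacters and according to the two halves of the tree. Then, inside each piece, we use the radius filtration together with Lemma~\ref{functions are 0}.

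\textbf{Step 1: split by $H$-eigencharacters.} By Lemma~\ref{eigenvalues of sn}, $s_n^{p^l}$ has eigencharacter $(d/a)^{p^l}$. Since $\beta$ conjugates $H$ by swapping the diagonal entries, $\beta s_n^{p^l}$ has eigencharacter $(a/d)^{p^l}$. The functions $[\id,1]$ and $[\beta,1]$ have the trivial character. Eigenvectors for distinct characters are linearly independent, so a relation splits into relations within each eigencharacter. Because $q>3$, the characters $(d/a)^{p^l}$ for $0\le l\le f-1$ are pairwise distinct and nontrivial, and they differ from $(a/d)^{p^{l'}}$ unless $p^l\equiv -p^{l'} \pmod{q-1}$. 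That congruence can happen only in degenerate small cases, which I would treat separately using Step 2.

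This leaves two kinds of relations to rule out:
\begin{itemize}
\item[(a)] $\sum_{n} c_n s_n^{p^l} + \sum_n c'_n \beta s_n^{p^{l'}} \equiv 0$ with the characters coinciding;
\item[(b)] $x[\id,1]+y[\beta,1]\equiv 0$.
\end{itemize}

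\textbf{Step 2: separate the two sides of the tree.} Under the isomorphism with $\pi_0$, the vectors $s_n^{p^l}$ for $n\ge 2$ map to functions supported on vertices $g^0_{n,\mu}$ on the left side of the tree. The vectors $\beta s_n^{p^l}$ are supported on the right side. The operator $T$ is built from adjacency. Hence a preimage under $T$ of a function supported on the left and right sides but not on the root decomposes accordingly, up to a contribution at the root vertex and its neighbours, which I will have to control.

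\textbf{Step 3: triangularity in $n$ (the main check).} Suppose $\sum_{n=2}^N c_n s_n^{p^l}\in {\im} T$ with $c_N\neq0$. The function is supported in $B(N)$, and on the sphere of radius $N$ its values are $c_N\mu_{N-1}^{p^l}$. A preimage $f'$ must be supported in $B(N-1)$, and its values on $B(N-1)\setminus B(N-2)$ are forced, as in the proof of Lemma~\ref{functions are 0}. So the values on radius-$N$ vertices sharing a common neighbour must be equal. But $\mu_{N-1}\mapsto \mu_{N-1}^{p^l}$ is nonconstant, which is a contradiction. Hence $c_N=0$, and by descending induction all $c_n=0$; the same argument works on the right side. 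The main obstacle is making Step 2 rigorous when supports from both sides meet near the root, because $T$ mixes adjacent vertices there. For this I would apply the radius-$N$ argument simultaneously to the top sphere on both sides, using $N\ge2$ so that the top sphere never touches the root.

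\textbf{Step 4: the root vectors.} Finally, $[\id,1]$ and $[\beta,1]$ correspond to functions supported on the root vertex and on one adjacent vertex (or combinations thereof). I would show directly that $x[\id,1]+y[\beta,1]$ corresponds to a function that is not in ${\im}T$ unless $x=y=0$. This holds because a nonzero function supported in $B(1)$ whose values on the neighbours are not all equal cannot be of the form $Tf'$ with $f'$ supported at the root, and also cannot be $0$. Alternatively, I would invoke the known result that these two vectors span a two-dimensional space of $I(1)$-invariants in $\pi_0$, by Breuil-type arguments.
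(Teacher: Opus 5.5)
Your proposal is correct and follows essentially the paper's route. You split a putative relation by $H$-eigencharacters, transport it to $\KZind \sigma_0/{\im} T$ via \cite[Theorem 3]{CS25}, and kill the top coefficient with the sphere argument of Lemma~\ref{functions are 0}, descending in the radius. You are in fact more careful than the paper on two points. First, the paper leaves the independence of $[\id,1],[\beta,1]$ implicit, and your direct $B(1)$ argument covers it, so the appeal to Breuil (which concerns $F=\Q_p$) is unnecessary. Second, the paper overlooks that $(d/a)^{1}=(a/d)^{2}$ when $q=4$. Your simultaneous top-sphere argument handles that case, since the top sphere has radius $\geq 2$ and so every parent vertex lies strictly on one side of the tree.
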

\begin{proof}
    Using the fact that $I$ acts trivially on $[\id, 1]$ and $[\beta, 1]$ with Lemma~\ref{eigenvalues of sn}, we see that if there is a dependence relation, then it must consist only of $s_n^{p^l}$ or only of $\beta s_n^{p^l}$ for a fixed $0 \leq l \leq f - 1$. This proposition is then proved by using the isomorphism
    \begin{eqnarray*}
        \frac{\IZind \mathbbm{1}}{({\im} T_{1,2} , {\Ker} T_{1,2})} & \xrightarrow{\sim} & \frac{\KZind \sigma_0}{{\im} T} \\
        \left[\id, 1\right] & \mapsto & [\id, 1],
    \end{eqnarray*}
    proved in \cite[Theorem 3]{CS25}, with Lemma~\ref{functions are 0}.
\end{proof}

We remark that Proposition \ref{sn1 are invariant} excludes $q = 2, 3$. These were also excluded in {\rm{\cite[Theorem $1.2$]{Hen19}}}, {\rm{\cite[Theorem $1.4$]{Jan23}}} and {\rm{\cite[Theorem $1.2$]{AJ21}}}. Indeed, for $f=1$ they assumed that the $p$-adic digits of $r$ lie in $(2, p - 3)$. This means that their results do not apply to $q = 2, 3$ and $5$. We end this section by addressing the case $q=3$ in the following remark.

\begin{remark}\label{q = 2, 3}
    Note that for $q = 2, 3$, the element $s_2^{1}$ is not invariant under the lower triangular unipotents in $I(1)$ thanks to Lemma~\ref{snk like terms}. We show that for $q = 3$, the function $s_3^{1}$ is $I(1)$-invariant. Using Lemma \ref{inductive-step}, we then see that 
 $s_n^{1}$ is $I(1)$-invariant for $n \geq 3$. The checks for upper triangular unipotents and the diagonals in $I(1)$ are similar to those done in Proposition~\ref{sn1 are invariant}. We now provide an argument for the lower triangular unipotents. Consider,
 \begin{eqnarray*}
 \begin{pmatrix}
      1 & 0 \\
      \varpi c & 1 
  \end{pmatrix} \cdot s_{3}^{1} & = & \sum_{\mu \in I_3}\mu_{2}\left[\begin{pmatrix}
      1 & 0 \\
      \varpi c & 1 
  \end{pmatrix} \begin{pmatrix} \varpi^{3} & [\mu_{0}]+[\mu_1]\varpi+[\mu_2]\varpi^2  \\ 0 & 1\end{pmatrix}, 1\right]\\
  &=& \sum_{\mu \in I_3}\mu_{2}\left[\begin{pmatrix}
      1 & 0 \\
      \varpi c & 1 
  \end{pmatrix} \begin{pmatrix} \varpi & [\mu_{0}]  \\ 0 & 1\end{pmatrix}\begin{pmatrix} \varpi & [\mu_1]  \\ 0 & 1\end{pmatrix}\begin{pmatrix}
      \varpi & [\mu_2] \\
      0   & 1 
  \end{pmatrix}, 1\right]\\
  &=& \sum_{\mu \in I_3}\mu_{2}\left[ \begin{pmatrix} \varpi & [\mu_{0}]  \\ 0 & 1\end{pmatrix} \begin{pmatrix}
      1-\varpi c[\mu_{0}] & -[\mu_{0}^2]c \\
      \varpi^2 c & 1+\varpi c[\mu_{0}] 
  \end{pmatrix}\begin{pmatrix} \varpi & [\mu_1]  \\ 0 & 1\end{pmatrix} \begin{pmatrix}
      \varpi & [\mu_2] \\
      0   & 1 
  \end{pmatrix}, 1\right]\\
  &=& \sum_{\mu \in I_3}\mu_{2}\left[ \begin{pmatrix} \varpi & [\mu_{0}]  \\ 0 & 1\end{pmatrix} \begin{pmatrix}
     \varpi & [\mu_1-\mu_0^2c_0] \\
      0   & 1 
  \end{pmatrix}i \begin{pmatrix}
      \varpi & [\mu_2] \\
      0   & 1 
  \end{pmatrix}, 1\right],
\end{eqnarray*}
  where $$i = \begin{pmatrix}1 + \varpi\Delta & [c_0^2\mu_0^3-2c_0\mu_0\mu_1 - \mu_0^2c_1] + \varpi\Delta' \\ \varpi\Delta'' & 1 + \varpi\Delta''' \end{pmatrix}$$
  for some $\Delta, \Delta', \Delta'',$ and $\Delta''' \in \mathcal{O}$. Here we have used {\rm\cite[Lemma 4.1]{Jan23}} and the fact that $e > 1$ since $q = 3$. So, we have 
\begin{eqnarray*}
 \begin{pmatrix}
      1 & 0 \\
      \varpi c & 1 
  \end{pmatrix} \cdot s_{3}^{1} & = & \sum_{\mu \in I_3}\mu_{2}\left[ \begin{pmatrix} \varpi & [\mu_{0}]  \\ 0 & 1\end{pmatrix} \begin{pmatrix}
     \varpi & [\mu_1-\mu_0^2c_0] \\
      0   & 1 
  \end{pmatrix} \begin{pmatrix}
      \varpi & [\mu_2 + c_0^2\mu_0^3 - 2c_0\mu_0\mu_1 -\mu_0^2c_1] \\
      0   & 1 
  \end{pmatrix}, 1\right].
  \end{eqnarray*}
Making the change of variables $\mu_1 \mapsto \mu_1+\mu_0^2c_0 \text{ and } \mu_2 \mapsto \mu_2 + c_0^2\mu_0^3 + 2c_0\mu_0\mu_1 + \mu_0^2c_1,$ gives 
\begin{eqnarray*}
 \begin{pmatrix}
      1 & 0 \\
      \varpi c & 1 
  \end{pmatrix} \cdot s_{3}^{1} & = & \sum_{\mu \in I_3}(\mu_{2}+ c_0^2\mu_0^3 + 2c_0\mu_0\mu_1 + \mu_0^2c_1)\left[ \begin{pmatrix} \varpi & \mu  \\ 0 & 1\end{pmatrix} , 1\right].
  \end{eqnarray*}
Applying Lemma \ref{snk like terms}, we get (for $q=3$),
\[
\begin{pmatrix}
      1 & 0 \\
      \varpi c & 1
\end{pmatrix} \cdot s_{3}^{1}-s_3^{1} \in ({\im} T_{1,2}, {\Ker} T_{1,2}).
\]
\end{remark}

\section{Proof of Theorems \ref{pro-p-iwahori-invariants-intro} and \ref{endo-algebra-intro}}\label{proof-of- theorems}
In this section we first prove Theorem \ref{pro-p-iwahori-invariants-intro}. Then using the explicit description of $I(1)$-invariants obtained in Theorem \ref{pro-p-iwahori-invariants-intro}, we give a proof of Theorem \ref{endo-algebra-intro}. We start off by proving some essential lemmas.

The coefficients in a linear combination of functions in the induced space are given by polynomials (see Lemma \ref{Every fun is a poly}). In particular, if a function $f$ is a linear combination of functions in $B(n)\setminus B(n-1)$, then 
\[f = \sum_{\mu \in I_n}a_{\mu}\left[g^0_{n, \mu}, 1\right] +  \sum_{\mu \in I_n}b_{\mu}\left[g^0_{n, \mu}\beta, 1\right] + \sum_{\mu \in I_{n-1}}c_{\mu}\left[g^1_{n-1, \mu} w, 1\right] +  \sum_{\mu \in I_{n-1}}d_{\mu}\left[g_{n-1,\mu}^1w \beta, 1\right],\]
where $a_\mu, b_\mu$ are polynomials in $\mu_0, \dots, \mu_{n-1}$ and $c_\mu$ and $d_\mu$
are polynomials in $\mu_0, \dots, \mu_{n-2}$. The next lemma predicts the powers of $\mu_{n-1}$ for $f$ to be invariant under $\small\begin{pmatrix}1 & -\varpi^{n - 1} \\ 0 & 1\end{pmatrix} \in I(1)$.
\begin{lemma}\label{A form of invariants}
    For $n \geq 1$, let $f_n \in \IZind \mathbbm{1}$ be a function of the form $f_n' + f_n''$, where
    \[
        f_n' = \sum_{\mu \in I_n}a_{\mu}\left[g^0_{n, \mu}, 1\right] \text{ and } f_n'' = \sum_{\mu \in I_n}b_{\mu}\left[g^0_{n, \mu}\beta, 1\right], 
    \]
    for some polynomials $a_\mu$ and $b_\mu$ in variables $\mu_0, \mu_1, \ldots, \mu_{n - 1}$. We may assume that modulo $({\im}T_{1, 2}, {\Ker} T_{1, 2})$, the power of $\mu_{n - 1}$ in $b_{\mu}$ is $q - 1$. If $f_n$ is invariant under $\begin{pmatrix}1 & -\varpi^{n - 1} \\ 0 & 1\end{pmatrix}$ modulo $({\im} T_{1,2} , {\Ker} T_{1,2})$, then the powers of $\mu_{n - 1}$ in $a_\mu$ are $0$ or $p^l$ for $0 \leq l \leq f - 1$. 
\end{lemma}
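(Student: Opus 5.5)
We will argue along the lines of Lemma~\ref{snk not invariant}, using the unipotent element $u:=\begin{pmatrix}1 & -\varpi^{n-1} \\ 0 & 1\end{pmatrix}$. First we compute its action on both pieces. Since $u\,g^0_{n,\mu}=g^0_{n,\mu'}$ with $\mu'$ obtained from $\mu$ by replacing $\mu_{n-1}$ with $\mu_{n-1}-1$, and the lower digits are unchanged, the substitution $\mu_{n-1}\mapsto\mu_{n-1}+1$ gives
\[
u\cdot f_n - f_n = \sum_{\mu\in I_n}\bigl(a_\mu(\mu_{n-1}+1)-a_\mu(\mu_{n-1})\bigr)[g^0_{n,\mu},1] + \sum_{\mu\in I_n}\bigl(b_\mu(\mu_{n-1}+1)-b_\mu(\mu_{n-1})\bigr)[g^0_{n,\mu}\beta,1],
\]
where the dependence on $\mu_0,\dots,\mu_{n-2}$ is suppressed. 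Because no carries occur in the top digit, this is an honest identity in $\IZind \mathbbm{1}$.

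Next we dispose of the $\beta$-part. Under the normalization that $b_\mu$ involves $\mu_{n-1}$ only through $\mu_{n-1}^{q-1}$, write $b_\mu=c_{[\mu]_{n-1}}\mu_{n-1}^{q-1}$, possibly after adding a part independent of $\mu_{n-1}$. The difference $(\mu_{n-1}+1)^{q-1}-\mu_{n-1}^{q-1}$ is a polynomial in $\mu_{n-1}$ of degree $<q-1$. Hence Lemma~\ref{tnk like terms} (first case, with $k\neq q-1$) kills the entire second sum modulo ${\im} T_{1,2}$. Any part of $b_\mu$ independent of $\mu_{n-1}$ contributes zero to the difference anyway. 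So modulo $({\im} T_{1,2},{\Ker} T_{1,2})$ we are reduced to
\[
d:=\sum_{\mu\in I_n}\bigl(a_\mu(\mu_{n-1}+1)-a_\mu(\mu_{n-1})\bigr)[g^0_{n,\mu},1]\equiv 0 .
\]

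Now we transport $d$ through the isomorphism of \cite[Theorem 3]{CS25} to $\KZind\sigma_0$. There $[g^0_{n,\mu},1]\mapsto[g^0_{n,\mu},1]$, so $d$ becomes a function supported on vertices at distance $n$, and it must lie in ${\im} T$. By Lemma~\ref{functions are 0}, for each fixed $(\mu_0,\dots,\mu_{n-2})$, the polynomial $\Delta a(\mu_{n-1}):=a_\mu(\mu_{n-1}+1)-a_\mu(\mu_{n-1})$ must be constant in $\mu_{n-1}$.

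Finally we expand $a_\mu=\sum_{k=0}^{q-1}a_k\mu_{n-1}^k$, with $a_k$ polynomials in the lower digits, and consider the highest $k$ with $a_k\neq0$ that is neither $0$ nor a power of $p$. Then $(\mu_{n-1}+1)^k-\mu_{n-1}^k$ has a nonzero coefficient in some degree $j$ with $1\le j<k$: by Lucas' theorem, pick $j=k-p^{m}$ for the lowest nonzero $p$-adic digit $p^m$ of $k$. Since $k$ is not a power of $p$, this $j$ is at least $1$. We must make sure this monomial is not cancelled by other terms. Contributions to degree $j$ come only from $a_{k'}\mu_{n-1}^{k'}$ with $k'>j$. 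Powers of $p$ contribute only constants, and $k'=q-1$ has to be handled separately; the cleanest fix is to argue with the largest such $k$ and use triangularity in degree $k-1$ or the Lucas digit. Because degrees are $\le q-1$, distinct polynomials give distinct functions on $\Fq$ (Lemma~\ref{Every fun is a poly}), so the nonconstancy is genuine. This contradicts the constancy requirement, and the lemma follows.

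The main obstacle is this cancellation bookkeeping: showing that the contributions of different $k$ to $\Delta a$ cannot cancel to a constant. We would first handle it via the top-degree term, noting that the degree-$(k-1)$ coefficient $k a_k$ works when $p\nmid k$. When $p\mid k$, we would instead descend by the lowest $p$-digit as above, ordering the exponents by the index $k-p^{v_p(k)}$.
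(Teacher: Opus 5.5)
Your first three steps are the paper's own: shift the top digit to compute $u\cdot f_n-f_n$, discard the $\beta$-part by Lemma~\ref{tnk like terms}, and pass through \cite[Theorem 3]{CS25} and Lemma~\ref{functions are 0} to get that $\Delta a(x)=a(x+1)-a(x)$ is constant in $x=\mu_{n-1}$. (The paper discards the $\beta$-part for arbitrary $b_\mu$, since every $(\mu_{n-1}+1)^i-\mu_{n-1}^i$ has degree $<q-1$.) The gap is the final step, which you rightly flag, and your repair fails. When $p\mid k$, the Lucas degree $j=k-p^{v_p(k)}$ can also be hit by a smaller non-$p$-power exponent $k'$ with $j<k'<k$, and the contributions can cancel. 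For example, $\Delta(x^{2p})=2x^p+1$, $\Delta(x^{p+1})=x^p+x+1$ and $\Delta(x^2)=2x+1$, so $\Delta(x^{2p}-2x^{p+1}+x^2)=0$. Your degree-$(k-1)$ argument is correct when the largest offending exponent is prime to $p$, which covers $f=1$ and $q=4$.

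Beyond that, no bookkeeping can work, because $u$ only translates $\mu_{n-1}$ by $1$, i.e.\ by the subgroup $\F_p\subset\Fq$. Every $a(x)=h(x^p-x)$ satisfies $a(x+1)=a(x)$. Concretely, for $p$ odd and $f\geq 2$, take $b_\mu=0$ and $a_\mu=(\mu_{n-1}^p-\mu_{n-1})^2$, of degree $2p\leq q-1$; for $p=2$, $f\geq 3$, use $(\mu_{n-1}^2+\mu_{n-1})^3$. Then $u\cdot f_n=f_n$ exactly, yet $a_\mu$ has the exponents $2$, $p+1$, $2p$. This does not depend on the chosen representative: by the argument of Lemma~\ref{functions are 0}, changing $f_n$ by an element of $({\im} T_{1,2},{\Ker} T_{1,2})$ only alters $a_\mu$ by functions constant in $\mu_{n-1}$. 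So the statement, with invariance under this single matrix, is false whenever $f\geq 2$ and $q\neq 4$.

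The paper's proof makes exactly the same leap in one sentence (``since this is a polynomial of degree less than or equal to $q-1$''), so it has the same gap. The missing idea is to use invariance under all $u_\lambda=\begin{pmatrix}1 & -[\lambda]\varpi^{n-1}\\ 0 & 1\end{pmatrix}$, $\lambda\in\Fq$. This is available whenever the lemma is applied to an $I(1)$-invariant vector, as in the proof of Theorem~\ref{pro-p-iwahori-invariants}. Your three steps go through verbatim for $u_\lambda$, since $(\mu_{n-1}+\lambda)^i-\mu_{n-1}^i$ still has degree $<q-1$. They show that, for fixed $\mu_0,\dots,\mu_{n-2}$, $a(x+\lambda)-a(x)$ is independent of $x$ for every $\lambda\in\Fq$. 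Setting $x=0$ shows that $Q(x):=a(x)-a(0)$ satisfies $Q(x+\lambda)=Q(x)+Q(\lambda)$, so $Q$ is $\F_p$-linear. The $\F_p$-linear maps $\Fq\to\overline{\F}_p$ form an $f$-dimensional space, spanned by the linearly independent functions $x^{p^l}$, $0\leq l\leq f-1$. Since a nonzero polynomial of degree at most $q-1$ cannot vanish on all of $\Fq$, $a$ involves only the exponents $0$ and $p^l$.
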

\begin{proof}
    The $b_{\mu}$-part of this lemma follows from Lemma~\ref{tnk like terms}.
    We prove the $a_\mu$-part of this lemma. Note that
    \[
        \begin{pmatrix}1 & -\varpi^{n - 1} \\ 0 & 1\end{pmatrix}\cdot f_n - f_n= \left(\begin{pmatrix}1 & -\varpi^{n - 1} \\ 0 & 1\end{pmatrix}\cdot f_n' - f_n'\right) + \left(\begin{pmatrix}1 & -\varpi^{n - 1} \\ 0 & 1\end{pmatrix} \cdot f_n'' - f_n'' \right).
    \]
    The expression in the second parentheses above
    \[
        \begin{pmatrix}1 & -\varpi^{n - 1} \\ 0 & 1\end{pmatrix}\cdot f_n'' - f_n'' = \sum_{\mu \in I_n} \sum_{i = 0}^{q - 1} c_{[\mu]_{n - 1}, i}[(\mu_{n - 1} + 1)^i - \mu_{n - 1}^i]\left[g^0_{n - 1, [\mu]_{n - 1}}\begin{pmatrix}1 & [\mu_{n - 1}] \\ 0 & 1\end{pmatrix}w, 1\right]
    \]
    belongs to ${\im} T_{1, 2}$ by Lemma~\ref{tnk like terms}. Therefore, $f_n'$ and $f_n''$ are separately invariant under the action of $\begin{pmatrix}1 & -\varpi^{n - 1} \\ 0 & 1\end{pmatrix}$ modulo $({\im} T_{1, 2}, {\Ker} T_{1, 2})$.

    We write
    \[
        a_\mu = \sum_{i = 0}^{q - 1}a_{[\mu]_{n - 1},i}' \mu_{n - 1}^i,
    \]
    where the $a_{[\mu]_{n - 1}, i}'$ are functions of $\mu_{0}, \ldots, \mu_{n - 2}$. Note that
    \begin{eqnarray*}
        \begin{pmatrix}1 & -\varpi^{n - 1} \\ 0 & 1\end{pmatrix} \cdot f_n' - f_n' & = & \sum_{\mu \in I_n}\sum_{i = 1}^{q - 1}a_{[\mu]_{n - 1},i}'\left((\mu_{n - 1} + 1)^i - \mu_{n - 1}^i\right)\left[g^0_{n, \mu}, 1\right].
    \end{eqnarray*}
        Using the invariance of $f_n'$ proved just above, this difference belongs to $({\im} T_{1,2} , {\Ker} T_{1,2})$. Then, using the isomorphism  $\frac{\IZind \mathbbm{1}}{({\im} T_{1,2} , {\Ker} T_{1,2})} \xrightarrow{\sim} \frac{\KZind \sigma_0}{{\im} T}$ given in \cite[Theorem 3]{CS25} together with Lemma~\ref{functions are 0} we see that the function $\mu_{n - 1} \mapsto \sum_{i = 1}^{q - 1}a_{[\mu]_{n - 1},i}'\left((\mu_{n - 1} + 1)^i - \mu_{n - 1}^i\right)$ is constant. Since this is a polynomial of degree less than or equal to $q - 1$, we see that $a_{[\mu]_{n - 1},i}' = 0$ for all $2 \leq i \leq q - 1$ not a power of $p$. This concludes the proof of the $a_{\mu}$-part of the lemma.
\end{proof}

\begin{lemma}\label{independence wrt smaller mus}
    For $n \geq 1$, let $f_n \in \IZind \mathbbm{1}$ be a function of the form $f_n^0 + f_n^1$, where
    \begin{eqnarray*}
        && f_n^0 = \sum_{\mu \in I_n}a_{\mu}\left[g^0_{n, \mu}, 1\right] + \sum_{\mu \in I_n}b_{\mu}\left[g^0_{n, \mu}\beta, 1\right], \\ 
        &&f_n^1 = \sum_{\mu \in I_{n - 1}}c_{\mu}[g_{n-1,\mu}^1 w,1] + \sum_{\mu \in I_{n - 1}} d_{\mu}\left[g_{n-2,[\mu]_{n-2}}^1w \left(\begin{array}{cc}1& [\mu_{n-2}] \\ 0 & 1\end{array}\right)w,1\right],
    \end{eqnarray*}
    for some polynomials $a_\mu$, $b_\mu$, $c_{\mu}$ and $d_{\mu}$ in variables $\mu_0, \mu_1, \ldots, \mu_{n - 1}$. We also assume that the powers of $\mu_{n - 1}$ in $a_{\mu}$ are non-zero. Let $f = f_n + f'$, where $f' \in B(n - 1)$. Then,
    \begin{enumerate}
        \item if $f$ is invariant under $\begin{pmatrix}1 & -\varpi^{n - m} \\ 0 & 1\end{pmatrix}$ modulo $({\im} T_{1, 2}, {\Ker} T_{1, 2})$ for $1 \leq m \leq n$, then $a_\mu$ 
    is constant with respect to $\mu_0, \cdots, \mu_{n - 2}$.
    \item if $f$ is invariant under $\begin{pmatrix}1 & 0 \\ -\varpi^{n - m +1} & 1\end{pmatrix}$ modulo $({\im} T_{1, 2}, {\Ker} T_{1, 2})$ for $3 \leq m \leq n$, then $c_\mu$ is constant with respect to $\mu_0, \cdots, \mu_{n - 3}$. 
    \end{enumerate}
    \end{lemma}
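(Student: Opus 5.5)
The plan is to compute the difference $u\cdot f - f$ for the unipotents in question. All terms of radius $<n$ or of the wrong shape will be discarded using Lemmas \ref{tnk like terms} and \ref{snk like terms}. We then transport the remaining radius-$n$ vertex part to $\KZind \sigma_0/{\im} T$ via \cite[Theorem 3]{CS25}, and conclude with Lemma~\ref{functions are 0}, exactly as in Lemma~\ref{A form of invariants}. The two parts are treated separately.

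For part (1), fix $1\le m\le n$ and let $u_m=\begin{pmatrix}1 & -\varpi^{n-m}\\ 0&1\end{pmatrix}$.

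\emph{Step 1 (reduction to the $a_\mu$-part).} The element $u_m$ stabilizes the left side of the tree and $B(n-1)$. So $u_m f'-f'$ stays in $B(n-1)$, and it cannot cancel radius-$n$ vertex terms after passing to $\KZind\sigma_0$. The right-side terms $f_n^1$ are acted on by an element of $I(1)$ fixing the root, so they stay on the right side; we ignore them in the left-side vertex analysis. The $b_\mu$-terms are edges pointing inward. By Lemma~\ref{tnk like terms} they reduce to vertices of radius $n-1$, hence also do not affect radius-$n$ coefficients.

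\emph{Step 2 (change of variables).} Write $g^0_{n,\mu}$ as a product of the matrices $\begin{pmatrix}\varpi&[\mu_j]\\0&1\end{pmatrix}$ and apply \eqref{Arindam's identity stolen} repeatedly. This shows $u_m g^0_{n,\mu}=g^0_{n,\mu'}i$ with $i\in I(1)$, where $\mu'_j=\mu_j$ for $j<n-m$, $\mu'_{n-m}=\mu_{n-m}-1$, and $\mu'_j=\mu_j+(\text{carry polynomial in }\mu_{n-m},\dots,\mu_{j-1})$ for $j>n-m$. The carries are zero when $e>1$ and are Witt-type polynomials when $e=1$. After substituting back, the radius-$n$ vertex part of $u_m f-f$ is $\sum_\mu (a_{\mu''}-a_\mu)[g^0_{n,\mu},1]$, where $\mu''$ is the inverse substitution.

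\emph{Step 3 (induction on $m$).} By Lemma~\ref{functions are 0}, $\mu\mapsto a_{\mu''}-a_\mu$ must be independent of $\mu_{n-1}$. For $m=1$ this is Lemma~\ref{A form of invariants}, giving only powers $\mu_{n-1}^{p^l}$ (together with the non-zero-power assumption). We then induct downward on the index $n-m$. Expand $a_\mu=\sum_{l}\alpha_l(\mu_0,\dots,\mu_{n-2})\mu_{n-1}^{p^l}$ and compare the coefficient of $\mu_{n-1}^{p^l}$ in $a_{\mu''}-a_\mu$. Since $\mu_{n-1}\mapsto\mu_{n-1}+(\text{carry})$ is additive in $\mu_{n-1}$ and Frobenius is additive, this coefficient is $\alpha_l(\mu''_0,\dots,\mu''_{n-2})-\alpha_l(\mu_0,\dots,\mu_{n-2})$, and it must vanish. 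So $\alpha_l$ is invariant under the shift $\mu_{n-m}\mapsto\mu_{n-m}-1$ (plus carries into higher digits already shown irrelevant by induction). Shifting by $\F_p$ multiples alone is not enough over $\F_q$. To get invariance under all of $\F_q$, I would use $u=\begin{pmatrix}1&-[x]\varpi^{n-m}\\0&1\end{pmatrix}$ for $x\in\F_q$; these are also in $I(1)$, and presumably the statement intends them. Translation invariance in each variable then forces $\alpha_l$ to be constant in $\mu_{n-m}$.

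For part (2), the argument runs the same way with $v_m=\begin{pmatrix}1&0\\-\varpi^{n-m+1}&1\end{pmatrix}$, which acts on the right side of the tree as the upper unipotent does on the left. This follows from conjugating by $\beta$, since $\beta$ swaps upper and lower unipotents of $I(1)$ up to a power of $\varpi$. Conjugating, $\beta^{-1}v_m\beta$ is an upper unipotent of level $n-m$. The $c_\mu$-terms become left-side vertices of radius $n-1$ indexed by $\mu\in I_{n-1}$, and the argument of part (1) applies with $n$ replaced by $n-1$. This explains the range $3\le m\le n$ and the conclusion about $\mu_0,\dots,\mu_{n-3}$. The $d_\mu$-terms are inward edges and are disposed of via the $\beta$-conjugate of Lemma~\ref{tnk like terms}.

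The main obstacle is Step 2 when $e=1$. There the carries are nonlinear in the lower digits, and one must check that they never produce a term that fails to cancel at radius $n$. I would handle this by ordering the induction so that, when treating $\mu_{n-m}$, all higher-digit dependence has already been controlled.
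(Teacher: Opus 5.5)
Your route is the paper's. You compute $u\cdot f-f$ and discard the contributions of $f'$, of the right side, and of the $b_\mu$-terms, which land at radius $n-1$. You pass to $\KZind \sigma_0/{\im} T$ via \cite[Theorem 3]{CS25} and apply Lemma~\ref{functions are 0}. You absorb the carry $z_{n-1}$ using additivity of $\mu_{n-1}\mapsto\mu_{n-1}^{p^l}$, and you induct downward on the digit being shifted.

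The one substantive divergence is the final step, and there you are right and the paper is not. The paper asserts that $\sum_i a_{[\mu]_j,i,l}\bigl((\mu_j+1)^i-\mu_j^i\bigr)=0$ on $\Fq$ forces $a_{[\mu]_j,i,l}=0$ for $p\nmid i$, and then iterates Frobenius. But $P(x)=x^p-x$ satisfies $P(x+1)=P(x)$ and has a nonzero coefficient in degree $1$. Invariance under the single matrix $\begin{pmatrix}1&-\varpi^{n-m}\\0&1\end{pmatrix}$ only yields invariance under $\mu_{n-m}\mapsto\mu_{n-m}+\F_p$. So part (1) as literally stated is false once $f\ge 2$. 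Take $e\ge 2$, $n=2$, $a_\mu=(\mu_0^p-\mu_0)\mu_1$, $b=c=d=0$, $f'=0$:
\begin{itemize}
\item the function is exactly fixed by $\begin{pmatrix}1&-1\\0&1\end{pmatrix}$, since there is no carry into $\mu_1$ because $p\in\varpi^2\mathcal O$;
\item its difference under $\begin{pmatrix}1&-\varpi\\0&1\end{pmatrix}$ is $\sum_{\mu\in I_2}(\mu_0^p-\mu_0)[g^0_{2,\mu},1]\equiv 0$ by Lemma~\ref{snk like terms};
\item yet $a_\mu$ depends on $\mu_0$.
\end{itemize}
Your strengthened hypothesis, invariance under all $\begin{pmatrix}1&-[x]\varpi^{n-m}\\0&1\end{pmatrix}$ with $x\in\Fq$, is exactly the repair needed. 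It costs nothing in the proof of Theorem~\ref{pro-p-iwahori-invariants}, where $f\in\tau^{I(1)}$.

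Two loose ends remain.

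\emph{The base step.} Lemma~\ref{A form of invariants}, which you cite for $m=1$, has the same defect. For $p\ge3$, $f\ge2$, $n=1$, the coefficient $a_\mu=(\mu_0^p-\mu_0)^2$ is fixed by $\begin{pmatrix}1&-1\\0&1\end{pmatrix}$ but has exponents $2,p+1,2p$. So apply your strengthened hypothesis at $m=1$ as well. Invariance under every shift $\mu_{n-1}\mapsto\mu_{n-1}+x$ makes $a_\mu-a_\mu|_{\mu_{n-1}=0}$ additive in $\mu_{n-1}$, hence an $\overline{\F}_p$-combination of the $\mu_{n-1}^{p^l}$.

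\emph{Part (2).} The $\beta$-conjugation is right: $\beta^{-1}v_m\beta=\begin{pmatrix}1&-\varpi^{n-m}\\0&1\end{pmatrix}$ and $\beta^{-1}g^1_{n-1,\mu}w=g^0_{n-1,\mu}$. But ``part (1) with $n$ replaced by $n-1$'' needs two inputs the statement does not supply:
\begin{itemize}
\item invariance for $m=2$, which is the conjugate of the base step and lies outside $3\le m\le n$; you need it to restrict the $\mu_{n-2}$-powers of $c_\mu$;
\item the analogue of the nonzero-power assumption for $c_\mu$.
\end{itemize}
Without them (2) fails. For example, $c_\mu=\mu_{n-3}$ gives $f=\beta\sum_{\mu\in I_{n-1}}\mu_{n-3}[g^0_{n-1,\mu},1]\equiv0$ by Lemma~\ref{snk like terms}. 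Both inputs are available where the lemma is used, since the theorem's proof applies Lemma~\ref{A form of invariants} to $\beta^{-1}f_n^1$ first. The paper's ``a similar proof goes through'' has the same omission, but you should state them explicitly.

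A minor point: for $e>1$ the carries are not zero in general, since a carry from digit $j$ lands in digit $j+e$ or beyond. Your induction handles them anyway.
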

\begin{proof}
 We shall prove the lemma for $a_\mu$. A similar proof goes through for $c_\mu$. 
    
    Assume that for some $0 \leq j \leq n - 2$, we can write 
    \[
        a_\mu = \sum_{l = 0}^{f - 1}\sum_{i = 0}^{q - 1}a_{[\mu]_{j}, i, l}\mu_{j}^i \mu_{n - 1}^{p^l} 
    \]
    where $a_{[\mu]_j, i, l}$ 
    is a function depending only on $[\mu]_j$. Here we have used Lemma~\ref{A form of invariants} (with the fact that the matrix $\begin{pmatrix}1 & -\varpi^{n - 1} \\ 0 & 1\end{pmatrix}$ fixes $f_n^1$ and $f'$) to restrict the powers of $\mu_{n - 1}$ to $p^{l}$, $0 \leq l \leq f - 1$ 
    in $a_{\mu}$.
    We write 
    \begin{eqnarray}\label{gf-f}
        \begin{pmatrix}
            1 & -\varpi^j \\ 0 & 1
        \end{pmatrix} \cdot f - f = \begin{pmatrix}
            1 & -\varpi^j \\ 0 & 1
        \end{pmatrix} \cdot f_n^0 - f_n^0 + f'',
    \end{eqnarray}
    where the support of $f''$ consists of edges in $B(n - 1)$ and the edges in $B(n)$ on the right-hand side of the tree with respect to the orientation chosen in figure \ref{Bruhat-Tits tree}.
    A short computation shows
    \begin{align}\label{The difference ifn - fn}
        \begin{pmatrix}
            1 & -\varpi^j \\ 0 & 1
        \end{pmatrix} \cdot f_n^0 - f_n^0 = & \sum_{\mu \in I_n}\sum_{l = 0}^{f - 1}\sum_{i = 0}^{q - 1}a_{[\mu]_{j}, i, l}\left((\mu_{j} + 1)^i - \mu_{j}^i\right)\mu_{n - 1}^{p^l}[g^0_{n, \mu}, 1] \nonumber\\
        & + \sum_{\mu \in I_n}\sum_{l = 0}^{f - 1}\sum_{i = 0}^{q - 1}a_{[\mu]_{j}, i, l}(\mu_j + 1)^iz_{n - 1}^{p^l}[g^0_{n, \mu}, 1] \\
        & + \sum_{\mu \in I_n}\sum_{i = 0}^{q - 1}b_{[\mu]_j, i}\left((\mu_{j} + 1)^i(\mu_{n - 1} + z_{n - 1})^{q - 1} - \mu_{j}^i\mu_{n - 1}^{q - 1}\right)[g^0_{n, \mu}\beta, 1], \nonumber
    \end{align}
    for some $z_{n - 1} \in \Fq$ depending on $\mu_{j}, \ldots, \mu_{n - 2}$.
    The hypothesis on $f$ says that the left-hand side of the equation \eqref{gf-f} belongs to $({\im}T_{1, 2}, {\Ker} T_{1, 2})$. So the right-hand side does too. Using the isomorphism
    \[
        \frac{\IZind \mathbbm{1}}{({\im} T_{1, 2}, {\Ker} T_{1, 2})} \xrightarrow{\sim}\frac{\KZind \sigma_0}{{\im} T},
    \]
     given in \cite[Theorem 3]{CS25} together with \eqref{gf-f}, we see that the images of the first two sums on the right-hand side of \eqref{The difference ifn - fn} are supported on the vertices at a distance of $n$ from the root vertex, whereas the image of the last sum is supported on the vertices at a distance of $n - 1$ from the root vertex. Then, Lemma~\ref{functions are 0} applies and we conclude that the function
    \[
        \mu_{n - 1} \mapsto \sum_{l = 0}^{f - 1}\sum_{i = 0}^{q - 1}a_{[\mu]_j, i, l}\left((\mu_{j} + 1)^i - \mu_{j}^i\right)\mu_{n - 1}^{p^l} + a_{[\mu]_j, i, l}(\mu_j + 1)^iz_{n - 1}^{p^l}
    \]
    is constant for every $[\mu]_{n - 1}$. Since $z_{n - 1}$ does not depend on $\mu_{n - 1}$, we may ignore the second term in the expression above. Since $p^{f - 1} < q$, the constancy of the function above with respect to $\mu_{n - 1}$ forces $\sum_{i = 0}^{q - 1}a_{[\mu]_{j}, i, l}\left((\mu_{j} + 1)^i - \mu_{j}^i\right) = 0$ for all $l = 0, \ldots, f - 1$. Fixing $[\mu]_{j}$ and varying $\mu_{j}$ through $\Fq$, results in $a_{[\mu]_{j}, i, l} = 0$ for every $0 \leq l \leq f - 1$ and every $0 \leq i \leq q - 1$, $p \nmid i$. So we may write the remaining $i$ as $pi'$ for $0 \leq i' \leq q/p -1 $ and that $\sum_{i' = 0}^{q/p-1}a_{[\mu]_j, pi', l}\left((\mu_{j}^{p} + 1)^{i'} - (\mu_{j}^p)^{i'}\right) = 0$. Since the absolute Frobenius is a bijection on $\Fq$, we may take $p$-th roots and repeat the previous argument to conclude that $a_{[\mu]_j, i, l} = 0$ for $0 \leq l \leq f - 1$ and $0 \leq i \leq q - 1$, $p^2 \nmid i$. Iterating this $f$-times, we see that the only $a_{[\mu]_j, i, l}$ that may survive are those with $i = 0$. This means that $a_{\mu}$ is constant with respect to $\mu_j$. Finally, 
    the lemma is proved by varying $j$ from $n - 2$ to $0$.
\end{proof}

Let us now prove Theorem \ref{pro-p-iwahori-invariants-intro}.
\begin{theorem}\label{pro-p-iwahori-invariants}
     Let 
    \[ 
        \tau := \frac{\IZind \mathbbm{1}}{({\im} T_{1,2} , {\Ker} T_{1,2})}
    \] 
    be the Iwahori-Hecke model of the universal supersingular representation $\pi_0$ of $G$. A basis for the set of $I(1)$-invariants of $\tau$ is
    \[
        \left\{[\id, 1], [\beta, 1], s_n^{p^l}, \beta s_n^{p^l}\right\}_{\substack{n \geq 2 \\ 0 \leq l \leq f - 1}}.
    \]
\end{theorem}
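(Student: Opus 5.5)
The proof splits into two parts: the listed elements are $I(1)$-invariant and independent, and every $I(1)$-invariant lies in their span. \emph{Invariance and independence.} $[\id,1]$ and $[\beta,1]$ are invariant because $I(1)\subset IZ$ and $\beta$ normalizes $I(1)$. Proposition~\ref{sn1 are invariant} and Lemma~\ref{inductive-step} give invariance of $s_n^{p^l}$ for $n\geq 2$. Since $\beta$ normalizes $I(1)$, the elements $\beta s_n^{p^l}$ are invariant as well. Linear independence is Proposition~\ref{linear independence lemma}. So the real work is the spanning statement.

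\emph{Spanning.} Let $v\in\tau^{I(1)}$ and lift it to some $f\in \IZind \mathbbm{1}$ with minimal radius $n$, i.e.\ $f\in B(n)$ and $n$ as small as possible among lifts of $v$. We induct on $n$.

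First, reduce the outermost layer. Write the part of $f$ in $B(n)\setminus B(n-1)$ as $f_n^0+f_n^1$, with left-side part $f_n^0$ and right-side part $f_n^1$ as in Lemma~\ref{independence wrt smaller mus}. In $f_n^0$, expand the coefficients $b_\mu$ in powers of $\mu_{n-1}$. By Lemma~\ref{tnk like terms}, all powers other than $q-1$ vanish modulo $\im T_{1,2}$, and the $(q-1)$-power reduces to a function in $B(n-1)$. So we may assume $b_\mu=0$.

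Next, consider the coefficients $a_\mu$. By Lemma~\ref{A form of invariants}, only the powers $\mu_{n-1}^0$ and $\mu_{n-1}^{p^l}$ occur. The $\mu_{n-1}^0$ part is $T_{-1,0}$ applied to a function on $B(n-1)$. By Lemma~\ref{L1}(ii), modulo $\Ker T_{1,2}$ it equals $-T_{1,0}$ of that function, which lies in $B(n-1)$. This is essentially the computation in Lemmas~\ref{snk like terms} and \ref{info about sn0}. We then apply Lemma~\ref{independence wrt smaller mus}(1), using invariance under $\begin{pmatrix}1&-\varpi^{j}\\0&1\end{pmatrix}$. This forces the $\mu_{n-1}^{p^l}$-coefficients to be independent of $\mu_0,\dots,\mu_{n-2}$, so the left outer layer equals $\sum_l \alpha_l s_n^{p^l}$ plus terms of smaller radius.

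For the right side, we use $\beta$. Conjugation by $\beta$ swaps the left and right halves of the tree and preserves $I(1)$-invariance and $(\im T_{1,2},\Ker T_{1,2})$. Hence $\beta\cdot f$ is handled by the same argument; equivalently one applies part (2) of Lemma~\ref{independence wrt smaller mus} with lower unipotents. This shows the right outer layer is $\sum_l \beta_l\,\beta s_{n-1}^{p^l}$ or $\beta s_n^{p^l}$ modulo smaller radius, after matching indices carefully.

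Now subtract the combination $\sum_l(\alpha_l s_n^{p^l}+\beta_l\beta s_{n'}^{p^l})$. The result is still $I(1)$-invariant and has a lift of radius $n-1$. Induction reduces us to small radius. The base cases $n\le 2$ have to be done by hand: invariants supported in $B(1)$ or $B(2)$ should be combinations of $[\id,1]$, $[\beta,1]$, $s_2^{p^l}$ and $\beta s_2^{p^l}$. There, Remark~\ref{t family}, Lemma~\ref{info about sn0}, and the non-invariance of $s_1^{p^l}$ show that nothing else survives.

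\emph{Main obstacle.} The delicate step is the reduction of the outermost layer, specifically getting the hypotheses of Lemma~\ref{independence wrt smaller mus} to hold. We need:
\begin{itemize}
\item the reduction modulo $(\im T_{1,2},\Ker T_{1,2})$ does not create new outer-layer terms;
\item the left and right halves decouple, so that invariance of $f$ gives invariance of each half separately modulo lower radius.
\end{itemize}
I would handle both through the isomorphism with $\KZind\sigma_0/\im T$ of \cite[Theorem 3]{CS25} together with Lemma~\ref{functions are 0}. Under this isomorphism, the outermost vertex layer of the image is determined by $a_\mu$ alone, up to functions constant on sibling groups, which is exactly the data that is killed. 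The bookkeeping of which radius $\beta s_n^{p^l}$ occupies is where I expect index errors.
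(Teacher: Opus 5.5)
Your proposal is essentially the paper's proof. Both argue by induction on the radius: Lemma~\ref{A form of invariants}, Lemma~\ref{independence wrt smaller mus} and the reductions of Lemmas~\ref{tnk like terms} and~\ref{snk like terms} bring the left outer layer to $\sum_l a_{n,l}s_n^{p^l}$; then invariance under $\begin{pmatrix}1&0\\-\varpi^{n-1}&1\end{pmatrix}$ (which fixes $B(n-1)$), combined with conjugation by $\beta$, brings the right outer layer to $\sum_l a'_{n,l}\,\beta s_{n-1}^{p^l}$.

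On the indexing you flagged: $\beta s_m^{p^l}$ is supported on edges between radii $m$ and $m+1$, so the right layer at radius $n$ is $\beta s_{n-1}^{p^l}$, and $\beta s_2^{p^l}\notin B(2)$. Consequently the non-invariant $s_1^{p^l}$ and $\beta s_1^{p^l}$ do appear at $n=1,2$, and their coefficients must be shown to vanish. The single element $\begin{pmatrix}1&-1\\0&1\end{pmatrix}$ of the Remark is not enough for this, since it fixes $\sum_l c_l s_1^{p^l}$ whenever $\sum_l c_l=0$. Instead, for every $c\in\F_q$ the element $\begin{pmatrix}1&-[c]\\0&1\end{pmatrix}$ changes $\sum_l c_l s_1^{p^l}$ by $-\bigl(\sum_l c_l c^{p^l}\bigr)[\beta,1]$ modulo $({\im}T_{1,2},{\Ker}T_{1,2})$. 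Since $\sum_l c_l X^{p^l}$ has degree less than $q$, this forces all $c_l=0$. The paper's own induction also leaves this base-case step implicit.
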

\begin{proof}
    By Proposition~\ref{sn1 are invariant} and Lemma \ref{inductive-step}, we see that the set $\left\{[\id, 1], [\beta, 1], s_n^{p^l}, \beta s_n^{p^l}\right\}_{\substack{n \geq 2 \\ 0 \leq l \leq f - 1}}$ is contained in $\tau^{I(1)}$. By Proposition~\ref{linear independence lemma}, this set is linearly independent. We show that it generates $\tau^{I(1)}$.

   Let $f \in \tau^{I(1)}$. We write 
   \[
        f = f_n + f',
   \]
   where $f_n \in B(n) \setminus B(n - 1)$ and $f' \in B(n - 1)$. We write
   \[
        f_n = f_n^0 + f_n^1,
   \]
   where $f_n^0 = \sum_{\mu \in I_n}a_{\mu}[g^0_{n, \mu}, 1] + \sum_{\mu \in I_n}b_{\mu}[g^0_{n, \mu} \beta, 1]$ for some functions $a_{\mu}, b_{\mu} : I_n \to \Fq$ is the part of $f_n$ on the left-hand side of the tree and $f_n^1$ the part of $f$ on the right-hand side of the tree. Using Lemma~\ref{Every fun is a poly} we may assume that $a_{\mu}$ and $b_{\mu}$ are polynomials in $\mu_0, \ldots, \mu_{n - 1}$. Using Lemma \ref{A form of invariants} (with the fact that the matrix $\begin{pmatrix}1 & -\varpi^{n - 1} \\ 0 & 1\end{pmatrix}$ fixes $f_n^1$ and $f'$), Lemma~\ref{independence wrt smaller mus} and Lemma \ref{snk like terms}, we see that $f \equiv \sum_{l = 0}^{f - 1}a_{n, l}s_n^{p^l} + f_n^1 + f'' \mod ({\im} T_{1, 2}, {\Ker} T_{1, 2})$ for some $a_{n, l} \in \Fq$ and for $f'' \in B(n - 1)$. Since $s_n^{p^l}$ is $I(1)$-invariant modulo $({\im} T_{1, 2}, {\Ker} T_{1, 2})$, we see that $f_n^1 + f''$ is also $I(1)$-invariant. As the matrix $\begin{pmatrix}1 & 0 \\ -\varpi^{n - 1} & 1\end{pmatrix}$ fixes $f''$, we see that it fixes $f_n^1$ modulo $({\im} T_{1,2} , {\Ker} T_{1,2})$. Note that $f_n^1$ is of the form $\beta f_{n-1}^0$ for some function $f_{n-1}^0$ supported on $B^0(n-1)\setminus B^0(n-2)$. So $\beta^{-1} \begin{pmatrix}1 & 0 \\ -\varpi^{n - 1} & 1\end{pmatrix} \beta =\begin{pmatrix}1 & -\varpi^{n - 2}\\ 0 & 1\end{pmatrix}$ fixes $\beta^{-1} f_n^1 = f_{n-1}^0$ and therefore by Lemma \ref{A form of invariants} the powers of $\mu_{n-2}$ in $f_{n-1}^0$ are either $0$ or $p^l$ for some $0\leq l \leq f-1$. Also Lemma \ref{independence wrt smaller mus} implies $f_n^1$ is congruent to $\sum_{l = 0}^{f - 1}a_{n, l}'\beta s_{n-1}^{p^l}$ modulo $({\im} T_{1, 2}, {\Ker} T_{1, 2})$. Finally we have
   \[f \equiv \sum_{l = 0}^{f - 1}a_{n, l}s_n^{p^l} + \sum_{l = 0}^{f - 1}a_{n, l}'\beta s_{n-1}^{p^l} +  f''' \mod ({\im} T_{1, 2}, {\Ker} T_{1, 2}),\] where $a_{n, l}' \in \Fq$ and for $f''' \in B(n - 1)$. Since $f, s_n^{p^l}$ and $\beta s_{n-1}^{p^l}$ are $I(1)$-invariant, so is $f'''$. The theorem is then proved by induction on $n$.
\end{proof}

    As an application of Theorem \ref{pro-p-iwahori-invariants}, we can show that the endomorphism algebra of $\pi_0$ only consists of scalars. This shows, in particular, that $\pi_0$ is indecomposable. Note that the following theorem appears as Theorem \ref{endo-algebra-intro} in the introduction.
    \begin{theorem}\label{endo-algebra}
        The endomorphism algebra $\End_G(\pi_0) = \overline{\F}_p$.
    \end{theorem}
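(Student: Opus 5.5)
The plan is to identify $\pi_0$ with $\tau$ via \cite[Theorem 3]{CS25} and use that $\tau$ is generated as a $G$-representation by the single vector $[\id,1]$. A $G$-endomorphism $\phi$ is then determined by $\phi([\id,1])$. Since $[\id,1]$ is fixed by $IZ$, its image $v=\phi([\id,1])$ lies in $\tau^{IZ}\subseteq\tau^{I(1)}$, and $v$ has trivial $H$-eigencharacter.

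\textbf{Step 1: restrict $v$ to the trivial eigencharacter.} Write $v$ in the basis of Theorem~\ref{pro-p-iwahori-invariants}. By Lemma~\ref{eigenvalues of sn}, $s_n^{p^l}$ has eigencharacter $(d/a)^{p^l}$, which is nontrivial because $q>3$ and $0<p^l<q-1$. Since $\beta$ conjugates $H$ by swapping the diagonal entries, $\beta s_n^{p^l}$ has eigencharacter $(a/d)^{p^l}$, also nontrivial. Only $[\id,1]$ and $[\beta,1]$ have trivial character. Hence
\[
v=x[\id,1]+y[\beta,1],\qquad x,y\in\overline{\F}_p .
\]

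\textbf{Step 2: kill $y$ using the Hecke relations satisfied by $[\id,1]$.} We know $T_{1,2}[\id,1]\in{\im}T_{1,2}$, so $T_{1,2}[\id,1]=0$ in $\tau$. By \eqref{Formulas T10 and T12}, this relation reads $\sum_{\lambda\in I_1}[g^1_{0,\lambda'}\text{-type edges},1]=0$, i.e. $\sum_{\lambda\in I_1}\bigl(\begin{smallmatrix}1&0\\ \varpi\lambda&\varpi\end{smallmatrix}\bigr)\cdot[\id,1]=0$, an identity in the group algebra acting on $[\id,1]$. Applying $\phi$ gives
\[
\sum_{\lambda}\begin{pmatrix}1&0\\ \varpi\lambda&\varpi\end{pmatrix}\cdot v=T_{1,2}v=0
\]
(interpreting $T_{1,2}$ via the same right-translation formula). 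With $v=x[\id,1]+y[\beta,1]$, this gives $y\,T_{1,2}[\beta,1]=0$. Now $T_{1,2}[\beta,1]=T_{1,2}T_{1,0}[\id,1]$. Using Lemma~\ref{L1} and the isomorphism with $\KZind\sigma_0/{\im}T$ plus Lemma~\ref{functions are 0}, I will check that this element is nonzero in $\tau$: it is a sum of $q$ edges at distance one on the right side of the tree, whose image in $\KZind\sigma_0/{\im}T$ is a nonzero combination of vertices. Alternatively, I could reduce it via Remark~\ref{t family} to $\pm[\beta,1]$-type terms that are nonzero by Proposition~\ref{linear independence lemma}. Either way $y=0$.

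A cleaner alternative for Step 2 uses $T_{1,0}+T_{-1,0}$. Lemma~\ref{L1}(ii) gives $(T_{1,0}+T_{-1,0})[\id,1]=[\beta,1]+s_1^0=0$ in $\tau$ by Lemma~\ref{info about sn0}(i). Both operators are given by sums of right translations, so $\phi$ commutes with them. Applying the same to $v$: for $[\beta,1]$ we get $(T_{1,0}+T_{-1,0})[\beta,1]=[\beta^2,1]+T_{-1,0}T_{1,0}[\id,1]$. Since $\beta^2$ is central, $[\beta^2,1]=[\id,1]$, and $T_{-1,0}T_{1,0}=T_{1,0}T_{1,2}$, which kills $[\id,1]$ modulo ${\im}T_{1,2}$. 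So this equals $[\id,1]\neq0$, forcing $y=0$.

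\textbf{Step 3: conclude.} With $y=0$ we have $\phi([\id,1])=x[\id,1]$, so $\phi=x\cdot\Id$ since $[\id,1]$ generates $\tau$. The main obstacle is justifying carefully that the Hecke operators (right translations) commute with $\phi$ when applied to the specific vector $[\id,1]$. The point is that $T[\id,1]=\sum g_i[\id,1]$ for explicit $g_i\in G$, hence $\phi(T[\id,1])=\sum g_i v$, and one must recompute $\sum g_i v$ for $v=[\beta,1]$ directly rather than appeal to operator identities. I expect this computation to land on $[\id,1]$ as above.
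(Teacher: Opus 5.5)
Your Step 1, your Step 3 and your overall strategy are exactly the paper's: find an element $X$ of the group algebra with $X[\id,1]=0$ in $\tau$, and show $X[\beta,1]\neq 0$. The gap is in Step 2, which is the only substantive step.

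Both versions of Step 2 replace the left translation $X\cdot[\beta,1]$, which is what $\phi$ respects, by the right-translation Hecke operator applied to $[\beta,1]$. These agree on $[\id,1]$ but not on $[\beta,1]$. With $g_\lambda=\begin{pmatrix}1&0\\ \varpi\lambda&\varpi\end{pmatrix}$ one has $\sum_\lambda g_\lambda[\beta,1]=\sum_\lambda[g_\lambda\beta,1]$, whereas $T_{1,2}[\beta,1]=\sum_\lambda[\beta g_\lambda,1]$.

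In your first version this is fatal. $T_{1,2}[\beta,1]=t_1^0=-T_{1,2}(s_1^0)$ lies in ${\im}T_{1,2}$, so it is zero in $\tau$, and the promised check that it is nonzero cannot succeed. More generally, by Lemma~\ref{L1}(ii), applying $T_{1,2}$ or $T_{1,0}+T_{-1,0}$ to any element of $\IZind\mathbbm{1}$ lands in $({\im}T_{1,2},{\Ker}T_{1,2})$. So no operator computation of this kind can ever detect $y$.

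In your second version the same conflation is masked by a second error. The claim that $T_{-1,0}T_{1,0}=T_{1,0}T_{1,2}$ kills $[\id,1]$ is false. From $T_{1,2}T_{1,0}T_{1,2}=-T_{1,2}$ you get $T_{1,2}\bigl(T_{1,0}T_{1,2}[\id,1]+[\id,1]\bigr)=0$, so $T_{1,0}T_{1,2}[\id,1]\equiv-[\id,1]$. Your operator computation therefore really gives $[\id,1]-[\id,1]=0$, as it must. The left translation you actually need has the operators in the opposite order: $\sum_\lambda g^0_{1,\lambda}[\beta,1]=T_{1,0}T_{-1,0}[\id,1]=T_{1,2}T_{1,0}[\id,1]=t_1^0$, and that one does vanish. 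So the two mistakes cancel and the value you \emph{expect} in Step 3 is right, but nothing in the proposal establishes it.

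The missing computation is short. Take $X=\beta+\sum_{\lambda\in I_1}g^0_{1,\lambda}$. Then $X[\id,1]=[\beta,1]+s_1^0\in{\im}(T_{1,0}+T_{-1,0})={\Ker}T_{1,2}$, while
\[
X[\beta,1]=[\beta^2,1]+\sum_{\lambda\in I_1}[g^0_{1,\lambda}\beta,1]=[\id,1]+t_1^0\equiv[\id,1]
\]
by \eqref{T_1,2 applied on s family}. Since $[\id,1]\neq 0$ in $\tau$ by Proposition~\ref{linear independence lemma}, this gives $y=0$. Equivalently, your first version, correctly interpreted, gives $\sum_\lambda g_\lambda[\beta,1]=T_{1,0}T_{1,2}[\id,1]\equiv-[\id,1]$.

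The paper uses $X=\sum_\lambda\begin{pmatrix}1&0\\ \lambda&1\end{pmatrix}=\alpha^{-1}\sum_\lambda g_\lambda$ and obtains $-[w\beta,1]=-[\alpha^{-1},1]$. This is the same computation, translated by $\alpha^{-1}$.
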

    \begin{proof}
        Recall that $\tau$ is the Iwahori model of the universal supersingular representation of $\pi_0$. Let $f : \tau \to \tau$ be a $G$-endomorphism. Then, $f$ must take the $I(1)$-invariant vector $[\id, 1]$ to an $I(1)$-invariant vector with the same $I$-eigencharacter. This means that
        \[
            f([\id, 1]) = a[\id, 1] + b[\beta, 1].
        \]
        Applying the operator $\sum_{\lambda \in I_1}\begin{pmatrix}1 & 0 \\ \lambda & 1\end{pmatrix}$ to the equation above and using the fact that
        \[
        T_{1, 2}([\alpha^{-1}, 1]) = \sum_{\lambda \in I_1}\left[\begin{pmatrix}1 & 0 \\ \lambda & 1 \end{pmatrix}, 1\right],
        \] 
        we get
        \[0 =  b\sum_{\lambda \in I_1}\left[\begin{pmatrix}1 & 0 \\ \lambda & 1\end{pmatrix}\beta, 1\right] = b\sum_{\lambda \in I_1}\left[wg^0_{1, \lambda}, 1\right],\]
        which, (using the definition of $T_{-1,0}$) equals \[bT_{-1, 0}[w, 1] \equiv -b[w\beta, 1] \mod ({\im} T_{1, 2}, {\Ker} T_{1, 2}),\] 
        where the congruence is obtained by Lemma \ref{L1} (ii). Therefore we have
        \[-b[w\beta, 1] \equiv 0 \mod ({\im} T_{1, 2}, {\Ker} T_{1, 2}).\]
        
        Since $[w\beta, 1]$ does not belong to $({\im} T_{1, 2}, {\Ker} T_{1, 2})$, we see that $b = 0$. This means that $f$ is just multiplication by $a$.
    \end{proof}
\section{Recovering the universal module functorially}

Let ${\rm Rep}_G$ and ${\rm Mod}_\cH$ denote the category of smooth $\mathbb{\overline{F}}_{p}$-representations of $G$ and the category of right $\cH$-modules respectively. Given $\tau \in {\rm Rep}_G$, using Frobenius reciprocity for the subgroup $I(1)Z$ of $G$, the set of $I(1)$-invariants in $\tau$ admits an action of the Hecke algebra $\cH$ and hence, $\tau^{I(1)}\in {\rm Mod}_\cH$. The reverse functor studied by Vign\`eras, Ollivier, and others is given by $M \mapsto M \otimes_{\cH} \IIZind \mathbbm{1}$. In this section, we first analyze the action of $\cH$ on the $I(1)$-invariants of the universal supersingular representation $\pi_0$ 
explicitly described in Theorem \ref{pro-p-iwahori-invariants}. By proving a comparison result between the Iwahori and pro-$p$-Iwahori induction, we finally show that 
$\pi_0$ can be realized as the direct summand of the image of the reverse functor (see Theorem \ref{summand-in-reverse-vigneras-functor}).

\subsection{Action of $\cH$ on the $I(1)$-invariants}
 \label{action-on-I(1)-invariants}
    We recall the formulas for the operators $T_{\beta}$, $e_{\chi}$, and $T_{n_s}$ on $\pi_0^{I(1)}$ for all characters $\chi : H \to \Fq^{\times}$. For any $v \in \pi_0^{I(1)}$,
    \begin{enumerate}
        \item $v \cdot T_{\beta} = \beta^{-1} \cdot v$,
        \item $v \cdot e_{\chi} = |H|^{-1}\sum_{h \in H}\chi(h)h^{-1} \cdot v$, and
        \item $v \cdot T_{n_s} = \sum_{\lambda \in I_1}\begin{pmatrix}1 & \lambda \\ 0 & 1\end{pmatrix}n_s^{-1} \cdot v$.
    \end{enumerate}
      
    Recall that $\tau$ is the Iwahori model of the universal supersingular representation of $\pi_0$. Let us compute the action of these operators on the basis vectors in $\tau^{I(1)}$ (see Theorem \ref{pro-p-iwahori-invariants}):
    \[[\id, 1],~[\beta, 1], ~s_n^{p^l},~ \beta s_n^{p^l},\] where $n \geq 2$, $0 \leq l \leq f - 1$.    
    \begin{enumerate}
        \item \underline{The operator $T_{\beta}$:} \\
        Using the formula given above, $T_{\beta}$ acts as left-multiplication by $\beta^{-1}$. This means that $T_{\beta}$ flips $[\id, 1], [\beta, 1]$ and flips $s_n^{p^l}, \beta s_n^{p^l}$.

        \item \underline{The idempotent $e_{\chi}$ corresponding to a character $\chi : H \to \Fq^{\times}$:} \\
        To compute the action of $e_{\chi}$, we first compute the actions of $h^{-1}$ on all the basis vectors for all $h \in H$. It can be easily seen that $H$ acts trivially on $[\id, 1]$ and $[\beta, 1]$. Moreover, $h^{-1} = \diag([\lambda_1^{-1}], [\lambda_2^{-1}])$ acts on $s_n^{p^l}$ as multiplication by $(\lambda_2/\lambda_1)^{p^l}$. Indeed, $h^{-1}g^0_{n, \mu} = g^0_{n, \mu'}h^{-1}$, where the digits of $\mu'$ are the digits of $\mu$ each multiplied by $[\lambda_2/\lambda_1]$. The transformations $\mu_i \mapsto \mu_i \cdot (\lambda_1/\lambda_2)$ then prove that $h^{-1}s_n^{p^l} = (\lambda_1/\lambda_2)^{p^l}s_n^{p^l}$. For the action on $\beta s_n^{p^l}$, we see that $\beta^{-1} h^{-1} \beta = \diag([\lambda_2^{-1}], [\lambda_1^{-1}])$. Using this, we immediately see that $h^{-1}$ acts on $\beta s_n^{p^l}$ as multiplication by $(\lambda_2/\lambda_1)^{p^l}$.

        Using these computations with the fact that the sum of values of a multiplicative character of a finite group is $0$ if the character is non-trivial and is the order of the group if the character is trivial, we may write down the action of $e_{\chi}$ on the basis vectors. The actions are
        \begin{eqnarray*}
            &[\id, 1]\cdot e_{\chi} =
            \begin{cases}
                0 & \text{ if } \chi \neq 1 \\
                [\id, 1] & \text{ if } \chi = 1,
            \end{cases}&
            \qquad
            [\beta, 1]\cdot e_{\chi} =
            \begin{cases}
                0 & \text{ if } \chi \neq 1 \\
                [\beta, 1] & \text{ if } \chi = 1,
            \end{cases} \\
            &s_n^{p^l}\cdot e_{\chi} = 
            \begin{cases}
                0 & \text{ if } \chi \neq (d/a)^{p^l} \\
                s_n^{p^l} & \text{ if } \chi = (d/a)^{p^l},
            \end{cases}&
            \qquad
            \beta s_n^{p^l} \cdot e_{\chi} = 
            \begin{cases}
                0 & \text{ if } \chi \neq (a/d)^{p^l} \\
                \beta s_n^{p^l} & \text{ if } \chi = (a/d)^{p^l}.
            \end{cases}
        \end{eqnarray*}

        \item \underline{The operator $T_{n_s}$:} \\
        The action of $T_{n_s}$ is slightly difficult to compute. We first make $T_{n_s}$ act on $[\id, 1]$:
        \begin{eqnarray*}
            [\id, 1] \cdot T_{n_s} & = & \sum_{\lambda \in I_1}\left[\begin{pmatrix}1 & \lambda \\ 0 & 1\end{pmatrix}\begin{pmatrix}0 & 1 \\ -1 & 0\end{pmatrix}, 1\right] \\
            & = & \sum_{\lambda \in I_1}\left[\begin{pmatrix}1 & 0 \\ 0 & -1\end{pmatrix}\beta \begin{pmatrix}1 & 0 \\ -\varpi \lambda & \varpi\end{pmatrix}, 1\right].
        \end{eqnarray*}
        As $\lambda$ runs through all elements in $I_1$, so does $-\lambda$. Therefore,
        \begin{eqnarray*}
            [\id, 1] \cdot T_{n_{s}} & = & T_{1, 2}\left[\begin{pmatrix}1 & 0 \\ 0 & -1\end{pmatrix}\beta, 1\right] = 0 \mod ({\im} T_{1, 2}, {\Ker}T_{1, 2}).
        \end{eqnarray*}
        Then, we make $T_{n_s}$ act on $[\beta, 1]$:
        \begin{eqnarray*}
            [\beta, 1] \cdot T_{n_s} & = & \sum_{\lambda \in I_1}\left[\begin{pmatrix}1 & \lambda \\ 0 & 1\end{pmatrix}\begin{pmatrix}0 & 1 \\ -1 & 0\end{pmatrix} \beta, 1\right] \\
            & = & \sum_{\lambda \in I_1}\left[\begin{pmatrix}\varpi & \lambda \\ 0 & 1\end{pmatrix}\begin{pmatrix}1 & 0 \\ 0 & -1\end{pmatrix}, 1\right] \\
            & = & T_{-1, 0}[\id, 1] \equiv -T_{1, 0}[\id, 1] = -[\beta, 1].
        \end{eqnarray*}
        The congruence above is modulo $({\im} T_{1, 2}, {\Ker} T_{1, 2})$ and obtained by Lemma \ref{L1}(ii).

        Then, the action on $\beta s_n^{p^l}$:
        \begin{eqnarray*}
            \beta s_n^{p^l}\cdot T_{n_s} & = & \sum_{\lambda \in I_1}\sum_{\mu \in I_n}\mu_{n - 1}^{p^l}\left[\begin{pmatrix}1 & \lambda \\ 0 & 1\end{pmatrix}\begin{pmatrix}0 & 1 \\ -1 & 0\end{pmatrix} \begin{pmatrix}0 & 1\\ \varpi & 0\end{pmatrix}\begin{pmatrix}\varpi^n & \mu \\ 0 & 1\end{pmatrix}, 1\right] \\
            & = & \sum_{\lambda \in I_1}\sum_{\mu \in I_n}\mu_{n - 1}^{p^l}\left[\begin{pmatrix}\varpi^{n + 1} & \lambda - \varpi \mu \\ 0 & 1\end{pmatrix}\begin{pmatrix}1 & 0 \\ 0 & -1\end{pmatrix}, 1\right].
        \end{eqnarray*}
        Making the transformations $\mu_i \mapsto -\mu_i$ for $0 \leq i \leq n - 1$, we see that
        \[
            \beta s_n^{p^l} \cdot T_{n_s} = -s_{n + 1}^{p^l}.
        \]
    Now, we may use the relation $e_{\chi}T_{n_s}^2 = -e_{\chi}e_{\chi^w}T_{n_s}$ given, e.g., in \cite[Lemma 2.0.12]{Pas04} to see that $T_{n_s}$ annihilates $s_n^{p^l}$ for $n \geq 3$. Now, the only computation that is left is the action of $T_{n_s}$ on $s_2^{p^l}$. We write
        \begin{eqnarray*}
            s_2^{p^l} \cdot T_{n_s} & = & \sum_{\lambda \in \Fq, \mu \in I_2} \mu_{1}^{p^l}\left[\begin{pmatrix}1 & [\lambda]  \\ 0 & 1\end{pmatrix}\begin{pmatrix}0 & - 1 \\ 1 & 0\end{pmatrix}\begin{pmatrix}\varpi^2 & \mu \\ 0 & 1\end{pmatrix}, 1\right] \\
            & = & \sum_{\lambda \in \Fq, \mu \in I_2, \mu_0 \neq 0} - \mu_{1}^{p^l}\left[\begin{pmatrix}\varpi^{2} & \frac{1 + [\lambda]\mu}{\mu} \\ 0 & 1\end{pmatrix}, 1\right] \\
            && \qquad + \sum_{\lambda \in \Fq, \mu \in I_2, \mu_0 = 0}-\mu_{1}^{p^l}\left[\beta \begin{pmatrix}\varpi & \frac{\mu}{\varpi(1 + [\lambda]\mu)} \\ 0 & 1\end{pmatrix}, 1\right].
        \end{eqnarray*}
        We separately show that these two sums are $0$. Consider the first sum. A short computation shows that $\frac{1 + [\lambda]\mu}{\mu} \equiv [\mu_0^{-1}] - [\mu_0^{-2}\mu_1]\varpi + [\lambda] \mod \varpi^2$. Making the change of variables $\mu_0 \mapsto \mu_0^{-1}$, we see that the first sum becomes
        \begin{eqnarray*}
            && \sum_{\lambda \in \Fq, \mu \in I_2, \mu_0 \neq 0} -\mu_1^{p^l}\left[\begin{pmatrix}\varpi^2 & [\mu_0] + [\lambda] - [\mu_0^{2}\mu_1]\varpi \\ 0 & 1\end{pmatrix}, 1\right] \\
            && = \sum_{\lambda \in \Fq, \mu \in I_2, \mu_0 \neq 0}-\mu_1^{p^l}\left[\begin{pmatrix}\varpi^2 & [\mu_0 + \lambda] + [-\mu_0^{2}\mu_1 + Z(\mu_0, \lambda)]\varpi \\ 0 & 1\end{pmatrix}, 1\right],
        \end{eqnarray*}
        where, by Lemma \cite[Lemma 4.1]{Jan23}, we have
        \[
            Z(\mu_0, \lambda) =
            \begin{cases}
                0 & \text{ if } e > 1 \\
                \dfrac{\mu_0^q + \lambda^q - (\mu_0 + \lambda)^q}{p} & \text{ if } e = 1.
            \end{cases}
        \]
        Making the change of variables $\mu_0 \mapsto \mu_0 - \lambda$, $\mu_1 \mapsto -(\mu_0 - \lambda)^{-2}(\mu_1 - Z(\mu_0 - \lambda, \lambda))$, we get
        \[
            \sum_{\lambda \in \Fq, \mu \in I_2, \mu_0 \neq \lambda} (\mu_0 - \lambda)^{-2p^l}(\mu_1 - Z(\mu_0 - \lambda, \lambda))^{p^l}\left[g^0_{2, \mu}, 1\right].
        \]
        Now, making the transformation $\lambda \mapsto \mu_0 - \lambda$, the sum becomes
        \[
            \sum_{\lambda \in \Fq^{\times}, \mu \in I_2} (\lambda)^{-2p^l}(\mu_1 - Z(\lambda, \mu_0 - \lambda))^{p^l}\left[g^0_{2, \mu}, 1\right].
        \]
        Since $g^0_{2, \mu}$ is independent of $\lambda$, we may use the fact that $\sum_{\lambda \in \Fq^{\times}}\lambda^{2p^l} = 0$ for $q > 3$. The largest power of $\mu_0$ in $Z$ is strictly less than $q - 1$. These two facts (with a transformation $\mu_0 \mapsto \mu_0^{1/p^{l}}$) imply that the sum corresponding to $\mu_0 \neq 0$ in $s_2^{p^l}$ vanishes.

        Next, consider the sum corresponding to $\mu_0 = 0$. This case is easier. We note that
        \[
            \begin{pmatrix}
                \varpi & \frac{\mu}{\varpi(1 + [\lambda]\mu)} \\ 0 & 1
            \end{pmatrix} = \begin{pmatrix}
                \varpi & [\mu_1] \\ 0 & 1
            \end{pmatrix}\cdot i
        \]
        for some $i \in I(1)$. So, each function appearing in the sum corresponding to $\mu_0 = 0$ is independent of $\lambda$. Summing over $\lambda$ now yields $0$.
    \end{enumerate}

    These results can be compactified in the following table:
    \begin{center}
    \label{action table}
      \begin{tabular}{|c|c|c|c|}
         \hline
         & $T_{\beta}$ & $T_{n_s}$ & $e_{\chi}$ \\ [0.5ex] 
         \hline
         $[\id, 1]$ & $[\beta, 1]$ & $0$ & $\begin{cases} 0 & \text{ if $\chi$ is non-trivial} \\ [\id, 1] & \text{ if $\chi$ is trivial} \end{cases}$ \\ 
         \hline
         $[\beta, 1]$ & $[\id, 1]$ & $-[\beta, 1]$ & $\begin{cases} 0 & \text{ if $\chi$ is non-trivial} \\ [\beta, 1] & \text{ if $\chi$ is trivial} \end{cases}$ \\
         \hline
         $s_n^{p^l}$ & $\beta s_n^{p^l}$ & 0 & $\begin{cases} 0 & \text{ if $\chi \neq (d/a)^{p^l}$} \\ s_n^{p^l} & \text{ if $\chi = (d/a)^{p^l}$} \end{cases}$ \\
         \hline
         $\beta s_n^{p^l}$ & $s_n^{p^l}$ & $-s_{n + 1}^{p^l}$ & $\begin{cases} 0 & \text{ if $\chi \neq (a/d)^{p^l}$} \\ \beta s_n^{p^l} & \text{ if $\chi = (a/d)^{p^l}$} \end{cases}$ \\
         \hline
        \end{tabular}
        
        \vspace{.1cm}
        Action of $\cH$ on $\tau^{I(1)}$ for $q>3$. 
        \end{center}

    \begin{remark}\label{generator}
        We remark that the subspace of $I(1)$-invariants in $\pi_0$ is generated by the element 
        \[
            [\id, 1] + \sum_{l = 0}^{f - 1}s_2^{p^l}
        \]
        over the pro-$p$-Iwahori Hecke algebra $\cH$. Indeed, applying the idempotent corresponding to the trivial character to this element yields $[\id, 1]$, whereas applying the idempotent corresponding to $(d/a)^{p^l}$ yields $s_2^{p^l}$. The remaining basis vectors mentioned above can then be obtained from $[\id, 1]$ or $s_2^{p^l}$ by successively applying $T_{\beta}$ and $T_{n_s}$.
    \end{remark}
\subsection{Comparison between Iwahori and pro-$p$-Iwahori induction}
    Using the following lemmas, we write down a direct sum decomposition of $\IIZind \mathbbm{1}$ with a factor isomorphic to $\IZind \mathbbm{1}$. Let $\eta: Z \rightarrow \mathbb{F}_q^{\times}$ denote the character sending $a \in \mathcal{O}^{\times}$ to $a$ and $\varpi$ to $1$.
    \begin{lemma}\label{Direct sum decomp}
         For $0 \leq r, s \leq q - 2$, the map
        \begin{eqnarray*}
            a^{r - s}d^s & \to & \IIZIZind \eta^{r} \\
            v & \mapsto & -\sum_{\lambda \in \Fq^{\times}}\lambda^{-s}\left[\begin{pmatrix}1 & 0 \\ 0 & [\lambda]\end{pmatrix}, 1\right]
        \end{eqnarray*}
        is $IZ$-equivariant, where $v$ is a basis element of $a^{r - s}d^s$. Moreover, putting all of them together, we get an isomorphism
        \[
            \bigoplus_{s = 0}^{q - 2} a^{r - s}d^s \xrightarrow{\sim} \IIZIZind \eta^r.
        \]
    \end{lemma}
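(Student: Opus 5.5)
The plan is to make $\IIZIZind \eta^r$ completely explicit, check the equivariance on generators of $IZ$, and then count dimensions.

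\emph{Step 1: structure of the quotient.} Since $I = H\cdot I(1)$, $I(1)$ is normal in $I$, and $Z\cap H$ consists of the scalar matrices $[c]\,\Id$, the quotient $IZ/I(1)Z$ is identified with $H/(H\cap Z)\simeq \Fq^\times$. The elements $\begin{pmatrix}1&0\\0&[\lambda]\end{pmatrix}$, $\lambda\in\Fq^\times$, give a set of coset representatives. Consequently $\IIZIZind \eta^r$ has dimension $q-1$, with basis $\left[\begin{pmatrix}1&0\\0&[\lambda]\end{pmatrix},1\right]$. Throughout I use the convention $[gz,1]=\eta^r(z)[g,1]$ for $z\in I(1)Z$, where $\eta^r$ is extended trivially on $I(1)$.

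\emph{Step 2: equivariance on generators.} The group $IZ$ is generated by $I(1)$, $H$, the scalars $[c]\,\Id$, and $\varpi\,\Id$, so it suffices to check these four.

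For $i\in I(1)$ and $h\in H$, normality gives $i[h,1]=[h\,(h^{-1}ih),1]=[h,1]$. So $I(1)$ acts trivially on both sides, since the character $a^{r-s}d^s$ factors through $I/I(1)$.

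The element $\varpi$ acts trivially on both sides. A scalar $[c]$ acts by $c^r$ on the induced module, and by $c^{r-s}c^s=c^r$ on the character.

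For $t=\diag([a],[d])$, write $t\begin{pmatrix}1&0\\0&[\lambda]\end{pmatrix}=[a]\begin{pmatrix}1&0\\0&[d\lambda/a]\end{pmatrix}$. Hence
\[
t\cdot\left[\begin{pmatrix}1&0\\0&[\lambda]\end{pmatrix},1\right]=a^r\left[\begin{pmatrix}1&0\\0&[d\lambda/a]\end{pmatrix},1\right].
\]
Substituting $\lambda'=d\lambda/a$ in $-\sum_\lambda\lambda^{-s}[\cdots]$ turns $\lambda^{-s}$ into $(a/d)^{-s}\lambda'^{-s}$. So $t$ multiplies the image of $v$ by $a^r(d/a)^s=a^{r-s}d^s$, exactly as $t$ acts on $v$. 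This is the only computation with real content; the main care needed is getting the direction of the substitution and the central-character convention right.

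\emph{Step 3: isomorphism.} Each image vector is nonzero, because all its coefficients $-\lambda^{-s}$ are nonzero. For $0\le s\le q-2$ the characters $a^{r-s}d^s$ are pairwise distinct on $H$: already on $\diag(1,[d])$ they are $d^s$, and these differ for distinct $s$ modulo $q-1$. Eigenvectors for distinct characters of $H$ are linearly independent. Alternatively, the coefficient matrix $(\lambda^{-s})_{\lambda,s}$ is invertible by orthogonality of characters of $\Fq^\times$.

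The direct-sum map is therefore injective between spaces of dimension $q-1$, and hence an $IZ$-equivariant isomorphism $\bigoplus_{s=0}^{q-2}a^{r-s}d^s\xrightarrow{\sim}\IIZIZind\eta^r$.
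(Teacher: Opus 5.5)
Your proposal is correct and takes essentially the paper's route. The main computation is the same: you determine how $\diag([a],[d])$ permutes the basis vectors $\left[\begin{pmatrix}1&0\\0&[\lambda]\end{pmatrix},1\right]$, picking up the factor $a^r$ from the central character, then substitute $\lambda\mapsto a\lambda/d$ and finish with a dimension count.

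The differences are minor. The paper checks a general element of $I$ through a single matrix identity and obtains bijectivity from surjectivity: the images for $s=0,\dots,q-2$ sum to $[\id,1]$, which generates the induced module. You instead check generators of $IZ$ and obtain bijectivity from injectivity, using that the $H$-eigencharacters are pairwise distinct.
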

    \begin{proof}
        We only need to show that $I$ acts via $a^{r - s}d^s$ on the element
        \[
            f = -\sum_{\lambda \in \Fq^{\times}}\lambda^{-s}\left[\begin{pmatrix}1 & 0 \\ 0 & [\lambda]\end{pmatrix}, 1\right].
        \]
        Indeed, using the identity
        \[
            \begin{pmatrix}
                a & b \\ \varpi c & d
            \end{pmatrix}
            \begin{pmatrix}
                1 & 0 \\ 0 & [\lambda]
            \end{pmatrix} = 
            \begin{pmatrix}
                1 & 0 \\ 0 & [d\lambda/a]
            \end{pmatrix}
            \begin{pmatrix}
                a & b[\lambda] \\ \frac{c\varpi}{[d\lambda/a]} & \frac{d}{[d/a]}
            \end{pmatrix},
        \]
        we see that
        \[
            \begin{pmatrix}
                a & b \\ \varpi c & d
            \end{pmatrix}f = -\sum_{\lambda \in \Fq^{\times}}\lambda^{-s}a^{r}\left[\begin{pmatrix}1 & 0 \\ 0 & [d\lambda/a]\end{pmatrix}, 1\right].
        \]
        The transformation $\lambda \mapsto a\lambda/d$ then yields
        \[
            \begin{pmatrix}
                a & b \\ \varpi c & d
            \end{pmatrix}f = a^{r - s}d^s f.
        \]

        For the final claim, we note that both sides are $q-1$ dimensional and that the map is surjective since
        \begin{eqnarray*}
            \sum_{s = 0}^{q - 2}-\sum_{\lambda \in \Fq^{\times}}\lambda^{-s}\left[\begin{pmatrix}1 & 0 \\ 0 & [\lambda]\end{pmatrix}, 1\right] = \sum_{\lambda \in \Fq^{\times}}-\left(\sum_{s = 0}^{q - 2}\lambda^{-s}\right)\left[\begin{pmatrix}1 & 0 \\ 0 & [\lambda]\end{pmatrix}, 1\right] = [\id, 1]. 
        \end{eqnarray*}
        This proves the direct sum decomposition of $\IIZIZind \eta^r$ claimed in the statement.
    \end{proof}

    Recall that the Hecke algebra $\cH$ is generated by $T_{\beta}, T_{n_s}$ and $e_{\chi}$ for $\chi : H \to \Fq^{\times}$. We write down formulas for the action of these Hecke operators on the space $\IIZind \mathbbm{1}$:
    \begin{enumerate}
        \item The formula for $T_{\beta}:$ 
        \[
            T_{\beta}[g, 1] = [g\beta, 1].
        \]
        \item The formula for $T_{n_s}:$
        \[
            T_{n_s}[g, 1] = \sum_{\lambda \in I_1}\left[g\begin{pmatrix}1 & \lambda \\ 0 & 1\end{pmatrix}\begin{pmatrix}0 & 1 \\ -1 & 0\end{pmatrix}, 1\right].
        \]
        \item The formula for $e_{\chi}$ for a character $\chi : H \to \Fq^{\times}:$
        \[
            e_{\chi}[g, 1] = \sum_{h \in H} \chi(h)\left[gh^{-1}, 1\right].
        \]
    \end{enumerate}

    \begin{lemma}\label{Comparison theorem}
        Let $\chi : H \to \Fq^{\times}$ be a character which agrees with $\eta^r$ on $Z\cap H$. Then,
        \begin{eqnarray*}
            \frac{\IIZind \eta^r}{\oplus_{\chi' \neq \chi}{\im} e_{\chi'}} & \to & \IZind \chi \\
            \left[\id, 1\right] & \mapsto & [\id, 1]
        \end{eqnarray*}
        is an isomorphism. Furthermore, the same map induces an isomorphism
        \[
            \frac{\IIZind \eta^r}{{\im} (1 - e_{\chi})} \simeq \IZind \chi.
        \]
        Moreover, under this isomorphism, $T_{\beta}$ and $T_{n_s}$ on the left get mapped to $T_{1, 0}$ and $T_{1, 2}T_{1, 0}$ on the right, respectively.
    \end{lemma}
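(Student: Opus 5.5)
The plan is to combine transitivity of induction with Lemma~\ref{Direct sum decomp}, and then to check all remaining claims on the generator $[\id, 1]$.

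First, transitivity gives
\[
\IIZind \eta^r \simeq \mathrm{ind}_{IZ}^G\bigl(\IIZIZind \eta^r\bigr) \simeq \bigoplus_{s=0}^{q-2} \IZind a^{r-s}d^s .
\]
The characters $a^{r-s}d^s$ are exactly the characters of $H$ that agree with $\eta^r$ on $Z \cap H$, so they include the given $\chi$.

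Next I will identify this decomposition with the one cut out by the idempotents. Since $e_{\chi'}$ acts by right translation by $H$, the $e_{\chi'}$ are orthogonal idempotents. Their sum over the relevant $\chi'$ is the identity on $\IIZind\eta^r$, because $H \cap Z$ acts by $\eta^r$. The image $\mathrm{Im}\, e_{\chi'}$ is the $G$-subrepresentation generated by the vector $e_{\chi'}[\id,1]$. This vector is fixed by $I(1)$ and $I$ acts on it through $\chi'$ or $\chi'^{-1}$, depending on conventions; I will fix the convention so that it matches Lemma~\ref{Direct sum decomp}. Hence $\mathrm{Im}\, e_{\chi'}$ is exactly the summand $\IZind \chi'$ above. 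Two consequences follow:
\[
\bigoplus_{\chi' \neq \chi} \mathrm{Im}\, e_{\chi'} = \mathrm{Im}(1 - e_\chi),
\]
and the quotient by this subspace is the $\chi$-summand. This gives both isomorphisms in the statement at once.

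To see that the map $[\id,1] \mapsto [\id,1]$ realizes the isomorphism, I will argue as follows.
\begin{itemize}
\item \emph{Existence.} By Frobenius reciprocity there is a unique $G$-map $\IIZind \eta^r \to \IZind\chi$ with $[\id,1] \mapsto [\id,1]$, because $[\id,1] \in \IZind \chi$ is $I(1)$-fixed and $Z$ acts on it by $\eta^r$.
\item \emph{Surjectivity.} The image contains a generator.
\item \emph{The other summands die.} For $\chi' \neq \chi$, the map sends $e_{\chi'}[\id,1]$ to $\sum_h \chi'(h)\chi(h)^{\pm1}[\id,1]$. With the right convention this is $0$ by orthogonality of characters.
\item \emph{Injectivity on the $\chi$-summand.} Restricted to $\mathrm{Im}\, e_\chi \simeq \IZind\chi$, the map sends the generator $e_\chi[\id,1]$ to a nonzero multiple $|H|\cdot|H|^{-1}[\id,1]$ of the generator. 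A $G$-map between copies of $\IZind \chi$ that sends generator to a nonzero multiple of the generator is an isomorphism.
\end{itemize}

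For the Hecke operators, every map involved is $G$-equivariant and $\IZind\chi$ is generated by $[\id,1]$. It therefore suffices to compare the images of $T_\beta[\id,1]$ and $T_{n_s}[\id,1]$ with $T_{1,0}[\id,1]$ and $T_{1,2}T_{1,0}[\id,1]$.
\begin{itemize}
\item For $T_\beta$ this is immediate: $T_\beta[\id,1] = [\beta,1] = T_{1,0}[\id,1]$.
\item For $T_{n_s}$, I will expand both sides. On the right, using \eqref{Formulas T10 and T12},
\[
T_{1,2}T_{1,0}[\id,1] = \sum_{\lambda\in I_1}\left[\beta\begin{pmatrix}1&0\\ \varpi\lambda&\varpi\end{pmatrix},1\right].
\]
On the left, $T_{n_s}[\id,1] = \sum_{\lambda}[u_\lambda n_s', 1]$, where $u_\lambda = \begin{pmatrix}1&\lambda\\0&1\end{pmatrix}$ and $n_s' = \begin{pmatrix}0&1\\-1&0\end{pmatrix}$.
\end{itemize}

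The main obstacle is the $T_{n_s}$ comparison. The two sums should match termwise after writing
\[
u_\lambda n_s' = \beta\begin{pmatrix}1&0\\ -\varpi\lambda&\varpi\end{pmatrix}\cdot \diag(1,-1)\cdot(\text{central }\varpi^{-1}),
\]
which is essentially the identity already used in \S\ref{action-on-I(1)-invariants}, followed by the substitution $\lambda\mapsto-\lambda$. The difficulty is the diagonal factor $\diag(1,-1)$: it contributes a scalar $\chi(\diag(1,-1))^{\pm1} = (-1)^{s}$ or $(-1)^{r-s}$. I will first check whether this sign is $1$ in the cases used later ($r=0$, $\chi$ trivial). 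In general it should either be absorbed by the chosen normalization or be recorded as a character-dependent sign. If it does not vanish, I will carry it explicitly, since it does not affect the application to $\chi = 1$.
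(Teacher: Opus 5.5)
Your proof of the two isomorphisms is correct and follows the paper's route: transitivity plus Lemma~\ref{Direct sum decomp}, the $e_{\chi'}$ as complementary idempotents, and the identification of $\mathrm{Im}\,e_{\chi'}$ with the $\chi'$-summand. Write $\Phi$ for the map $[\id,1]\mapsto[\id,1]$. Two small corrections.

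First, the identification does not follow just from $e_{\chi'}[\id,1]$ being $I(1)$-fixed with $I$ acting by $\chi'$, because that eigenspace of $\IIZind\eta^r$ is infinite-dimensional. The paper computes $e_\chi[\id,1]=-\sum_\lambda\lambda^{-s}[\diag(1,[\lambda]),1]$ explicitly. Alternatively, note that $e_{\chi'}[\id,1]$ is supported on $IZ$, where the eigenspace is a line. Your bullet points in fact make this unnecessary: if $\Psi\colon\IZind\chi\to\mathrm{Im}\,e_\chi$ is the Frobenius-reciprocity map $[\id,1]\mapsto e_\chi[\id,1]$, then $\Phi\circ\Psi=\id$, which gives both isomorphisms without Lemma~\ref{Direct sum decomp}.

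Second, there is no convention to choose. $I$ acts on $e_{\chi'}[\id,1]$ by $\chi'$, and $\Phi([h^{-1},1])=\chi(h)^{-1}[\id,1]$.

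The genuine gap is in the Hecke-operator clause, which the paper asserts without proof. Your identity is wrong:
\[
\varpi^{-1}\beta\begin{pmatrix}1&0\\-\varpi\lambda&\varpi\end{pmatrix}\diag(1,-1)=\begin{pmatrix}-\lambda&-1\\1&0\end{pmatrix}\neq u_\lambda n_s'.
\]
In \S\ref{action-on-I(1)-invariants} the diagonal matrix stands to the left of $\beta$. Moving it to the right conjugates it, and the correct identity is
\[
u_\lambda n_s'=\varpi^{-1}\,\beta\begin{pmatrix}1&0\\ \varpi\lambda&\varpi\end{pmatrix}\diag(-1,1),
\]
with no substitution needed. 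Hence, for $\chi=a^{r-s}d^s$,
\[
\Phi(T_{n_s}[\id,1])=\chi(\diag(-1,1))\,T_{1,2}T_{1,0}[\id,1]=(-1)^{r-s}\,T_{1,2}T_{1,0}[\id,1].
\]
With $T_{1,0},T_{1,2}$ given by \eqref{Formulas T10 and T12}, this factor is $-1$ in some cases, e.g.\ $p$ odd, $r=2$, $\chi=ad$. So it cannot simply be hoped away.

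More basically, ``it suffices to compare on $[\id,1]$'' presupposes that $T_\beta$ and $T_{n_s}$ preserve $\mathrm{Ker}\,\Phi=\mathrm{Im}(1-e_\chi)$. By the relations $T_\beta e_\chi=e_{\chi^w}T_\beta$ and $T_{n_s}e_\chi=e_{\chi^w}T_{n_s}$ quoted in Lemma~\ref{Annihilator lemma}, this holds only when $\chi=\chi^w$. Otherwise the invertible operator $T_\beta$ maps $\mathrm{Im}\,e_{\chi^w}\subseteq\mathrm{Ker}\,\Phi$ isomorphically onto $\mathrm{Im}\,e_\chi$, so it does not descend to the quotient. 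Correspondingly, $T_{1,0}$ maps $\IZind\chi$ to $\IZind\chi^w$, not to itself. So for arbitrary $\chi$ the clause cannot be proved as stated.

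What is true, and all that is used in Theorem~\ref{summand-in-reverse-vigneras-functor}, is the case $r=0$, $\chi=\mathbbm{1}$. There $e_{\mathbbm{1}}$ commutes with $T_\beta$ and $T_{n_s}$, so both descend, and the sign is $+1$. Your comparison on the generator $[\id,1]$, with the corrected identity, then gives $T_\beta\mapsto T_{1,0}$ and $T_{n_s}\mapsto T_{1,2}T_{1,0}$ exactly. More generally, the argument works whenever $\chi=\chi^w$ and $\chi(\diag(-1,1))=1$. You should state the clause under that hypothesis rather than leave the sign undetermined.
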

    \begin{proof}
        The central character of $\IIZind \eta^r$ is $\eta^r$. The image of the idempotent $e_{\chi'}$ consists of elements on which $Z \cap H$ acts by $\chi'$. Therefore if $\chi'$ does not agree with $\eta^r$ on $Z\cap H$, then ${\im} e_{\chi'} = 0$. Note that the remaining $e_{\chi}$ are complementary idempotents. Indeed, writing $h\in H$ as $\begin{pmatrix}[h_1] & 0 \\ 0 & [h_2] \end{pmatrix}$, we have
        \[ \sum_{s = 0}^{q - 2}(e_{a^{r - s}d^s})[g, 1] =  \sum_{h \in H} \sum_{s = 0}^{q - 2}h_1^{r - s}h_2^s[gh^{-1}, 1]  =  \sum_{h \in H}h_1^r\sum_{s = 0}^{q - 2}(h_2/h_1)^s[gh^{-1}, 1] = -\sum_{h \in Z \cap H}h_1^r[gh^{-1}, 1].\]
        The last expression clearly equals $[g, 1]$.
        This means that we have two direct sum decompositions of $\IIZind \eta^r$. We claim that under the map obtained using Lemma~\ref{Direct sum decomp}, $\IZind \chi$ maps to ${\im} e_{\chi}$ for $\chi = a^r, a^{r - 1}d, \ldots, a^{r - q + 2}d^{q - 2}$. To see this, for $\chi = a^{r-s}d^s$, we show that
        \[
            e_{\chi}[\id, 1] = -\sum_{\lambda \in \Fq^{\times}}\lambda^{-s}\left[\begin{pmatrix}1 & 0 \\ 0 & [\lambda]\end{pmatrix}, 1\right].
        \]
        Indeed,
        \begin{eqnarray*}
            e_{\chi}[\id, 1] & = & \sum_{h_1, h_2 \in \Fq^{\times}}\chi\begin{pmatrix}[h_1] & 0 \\ 0 & [h_2]\end{pmatrix}\left[\begin{pmatrix}[h_1^{-1}] & 0 \\ 0 &[ h_2^{-1}]\end{pmatrix}, 1\right] \\
            & = & \sum_{h_1, h_2 \in \Fq^{\times}} (h_1/h_2)^{-s}\left[\begin{pmatrix}1 & 0 \\ 0 & [h_1/h_2]\end{pmatrix}, 1\right] = -\sum_{\lambda \in \Fq^{\times}}\lambda^{-s}\left[\begin{pmatrix}1 & 0 \\ 0 & [\lambda]\end{pmatrix}, 1\right].
        \end{eqnarray*}
        
        Next, using the direct sum decomposition in Lemma~\ref{Direct sum decomp}, we see that ${\im} e_{a^{r - s}d^s}$ is isomporphic to $\IZind a^{r - s}d^s$. So, once we mod out by the idempotent corresponding to every $\chi' \neq \chi$, we get $\IZind \chi$. 
    \end{proof}
\subsection{Proof of Theorem \ref{summand-in-reverse-vigneras-functor-intro}}
    We begin by describing the annihilator of the $\cH$-submodule of $\pi_0^{I(1)}$ generated by $[\id, 1]$ as a right ideal in the non-commutative algebra $\cH$.
    \begin{lemma}\label{Annihilator lemma}
        The annihilator of the $\cH$-submodule of $\pi_0^{I(1)}$ generated by $[\id, 1]$ is the right ideal generated by $T_{n_s}, T_{\beta}(T_{n_s} + 1)$, and $e_{\chi}$ for $\chi \neq \mathbbm{1}$.
    \end{lemma}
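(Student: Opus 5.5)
The plan is to prove the two inclusions separately, using the action table of $\cH$ on $\tau^{I(1)}$ and a dimension count.

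\emph{Step 1: the three elements annihilate $[\id,1]$.} This is read off from the table in \S\ref{action-on-I(1)-invariants}. We have $[\id,1]\cdot T_{n_s}=0$, and $[\id,1]\cdot e_\chi=0$ for $\chi\neq \mathbbm{1}$. For the remaining element,
\[
[\id,1]\cdot T_\beta(T_{n_s}+1)=[\beta,1]\cdot T_{n_s}+[\beta,1]=-[\beta,1]+[\beta,1]=0.
\]
Let $J$ denote the right ideal generated by $T_{n_s}$, $T_\beta(T_{n_s}+1)$ and $e_\chi$ for $\chi\neq\mathbbm{1}$. Then $J$ is contained in the annihilator.

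\emph{Step 2: identify the cyclic module.} By the table, the $\cH$-submodule generated by $[\id,1]$ is $\langle[\id,1],[\beta,1]\rangle$, which is two-dimensional. Indeed, it contains $[\beta,1]=[\id,1]\cdot T_\beta$. Conversely, this span is stable under $T_\beta$, $T_{n_s}$ and all $e_\chi$. Hence the annihilator has codimension $2$ in $\cH$. It therefore suffices to show that $\cH/J$ is spanned by the images of $1$ and $T_\beta$, i.e.\ $\dim \cH/J\le 2$.

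\emph{Step 3: the spanning argument, which is the main step.} Use the Iwahori--Matsumoto type basis of $\cH$: every element is a combination of products $e_\chi T_{w_1}\cdots T_{w_k}$, words in $T_\beta$ and $T_{n_s}$ with $T_\beta^2$ central (trivial central character) and the idempotents. Since $J$ is a right ideal, we show by induction on word length that every word $x$ satisfies $x\in \overline{\F}_p\cdot 1+\overline{\F}_p\cdot T_\beta+J$. The key relations are the following.
\begin{itemize}
\item $e_\chi\equiv 0$ for $\chi\ne\mathbbm{1}$ and $e_{\mathbbm{1}}\equiv 1$ modulo $J$, because $\sum_\chi e_\chi=1$.
\item $T_{n_s}\in J$.
\item $T_\beta T_{n_s}\equiv -T_\beta$.
\item $T_\beta^2=1$.
\end{itemize}
Moving idempotents past $T_\beta$ uses $e_\chi T_\beta=T_\beta e_{\chi^w}$, and $e_\chi T_{n_s}=T_{n_s}e_{\chi^w}$. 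Moving them to the left of a word then reduces us to a word in $T_\beta$ and $T_{n_s}$ followed by $e_\chi$ on the right. The right factor $e_\chi$ lies in $J$ or is congruent to $1$.

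The subtle point is that $J$ is only a right ideal, so a congruence $y\equiv y'$ does not automatically give $xy\equiv xy'$. Instead we argue as follows. Write $J=T_{n_s}\cH+T_\beta(T_{n_s}+1)\cH+\sum_{\chi\neq\mathbbm{1}} e_\chi\cH$, and show that $V=\overline{\F}_p+\overline{\F}_pT_\beta+J$ is stable under \emph{left} multiplication by the generators. This needs three checks.
\begin{itemize}
\item $T_{n_s}V\subseteq V$. This uses $T_{n_s}^2=-\sum_\chi e_\chi e_{\chi^w}T_{n_s}$ (\cite[Lemma 2.0.12]{Pas04}) to handle $T_{n_s}\cdot T_{n_s}\cH$. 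It also uses $T_{n_s}T_\beta(T_{n_s}+1)=T_\beta\cdot(\text{something})$, obtained via the braid-free structure of $\cH$ for $\mathrm{GL}_2$, namely $T_\beta T_{n_s}T_\beta=T_{n_s'}$ for the other simple affine reflection.
\item $T_\beta V\subseteq V$.
\item $e_\chi V\subseteq V$.
\end{itemize}
Since $1\in V$, this gives $\cH=V$, so $\dim\cH/J\le2$.

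I expect this left-stability check, especially the products $T_{n_s}T_\beta T_{n_s}$, to be the main obstacle. If it becomes messy, the fallback is to compare directly with the isomorphism of Lemma~\ref{Comparison theorem}. Modulo $\sum_{\chi\ne \mathbbm{1}}e_\chi\cH$, the algebra $e_{\mathbbm{1}}\cH e_{\mathbbm{1}}$ is the Iwahori--Hecke algebra, with $T_\beta\mapsto T_{1,0}$ and $T_{n_s}\mapsto T_{1,2}T_{1,0}$. Under this identification the quotient by $J$ becomes the quotient by the right ideal $(T_{1,2}T_{1,0},\ T_{1,0}(T_{1,2}T_{1,0}+1))$. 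This quotient is visibly spanned by $1$ and $T_{1,0}$ using $T_{1,0}^2=1$ and $T_{1,2}T_{1,0}T_{1,2}=-T_{1,2}$.
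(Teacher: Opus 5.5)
Steps 1 and 2 are fine. Like the paper, you are computing the annihilator of the element $[\id,1]$. That is the only sensible reading: the annihilator of the whole module $\langle [\id,1],[\beta,1]\rangle$ cannot contain $T_{n_s}$, since $[\beta,1]\cdot T_{n_s}=-[\beta,1]$. Reducing to $\dim \cH/J\le 2$ is equivalent to the paper's last step, which writes $\phi\equiv a+bT_\beta \pmod J$ and evaluates at $[\id,1]$.

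The gap is in Step 3: your left-stability strategy for $V=\overline{\F}_p+\overline{\F}_pT_\beta+J$ is circular.
\begin{itemize}
\item The checks you worry about are trivial. We have $T_{n_s}V\subseteq T_{n_s}\cH\subseteq J$, and $e_\chi V\subseteq e_\chi\cH\subseteq J$ for $\chi\neq\mathbbm{1}$. Also $e_{\mathbbm{1}}v=v-\sum_{\chi\neq\mathbbm{1}}e_\chi v\in V$. So no braid relation is needed.
\item The only substantive check is $T_\beta V\subseteq V$, and it is equivalent to the lemma itself. For $v=T_\beta(T_{n_s}+1)h\in J$ one gets $T_\beta v=(T_{n_s}+1)h\equiv h\pmod J$, with $h\in\cH$ arbitrary. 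So $T_\beta V\subseteq V$ holds if and only if $V=\cH$.
\item Your earlier sketch ends with the idempotent as a right factor $e_\chi$ and then discards it. That fails for the reason you yourself give.
\end{itemize}

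The missing idea is that every reduction must happen at the \emph{left} end of a word. Only there are congruences modulo a right ideal legitimate: $y\equiv y'$ implies $yx\equiv y'x$. This is exactly the paper's proof.
\begin{itemize}
\item Use $T_\beta e_\chi=e_{\chi^w}T_\beta$ and $T_{n_s}e_\chi=e_{\chi^w}T_{n_s}$ to move idempotents to the \emph{left}. Then $e_\chi x\in J$ for $\chi\neq\mathbbm{1}$, and $e_{\mathbbm{1}}x\equiv x$.
\item For a word $x$ in $T_\beta,T_{n_s}$, induct on length using $T_{n_s}y\in J$, $T_\beta T_{n_s}y\equiv -T_\beta y$ and $T_\beta T_\beta y=y$. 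This leaves only $1$ and $T_\beta$.
\end{itemize}
The induction on word length you announce at the start of Step 3 is the right idea, provided it is run this way. Your fallback through Lemma~\ref{Comparison theorem} also becomes correct once its ``visibly spanned'' is justified by this same left-end induction; the relation $T_{1,2}T_{1,0}T_{1,2}=-T_{1,2}$ plays no role there. But it is then the paper's argument with an unnecessary detour through $e_{\mathbbm{1}}\cH e_{\mathbbm{1}}$, since moving the idempotents to the left already disposes of the components with $\chi\neq\mathbbm{1}$.
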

    \begin{proof}
      First observe that the operators $T_{n_s}, T_{\beta}(T_{n_s} + 1)$, and $e_{\chi}$ for $\chi \neq \mathbbm{1}$ annihilate $[\id, 1]$ by the table given at the end of \S\ref{action-on-I(1)-invariants}. Let $J$ be the right $\cH$-ideal generated by $T_{n_s}, T_{\beta}(T_{n_s} + 1)$, and $e_{\chi}$ for $\chi \neq \mathbbm{1}$. Note that $e_{\mathbbm{1}} - 1$ also belongs to $J$. Suppose $\phi \in \cH$ annihilates the $\cH$-submodule of $\pi_0^{I(1)}$ generated by $[\id, 1]$. Using the identities (see \cite[Lemma 2.0.12]{Pas04}) 
        \[
            T_{n_s}e_{\chi} = e_{\chi^{s}}T_{n_s}, T_{\beta}e_{\chi} = e_{\chi^{s}}T_{\beta}, \text{ and } e_{\chi}e_{\chi'} = \begin{cases}
                e_{\chi} & \text{ if } \chi = \chi' \\
                0 & \text{ if } \chi \neq \chi',
            \end{cases}
        \]
        we may write
        \[
            \phi = P_0'(T_{\beta}, T_{n_s}) + \sum_{\chi \>:\> H \to \Fq^{\times}} e_{\chi}P_{\chi}'(T_{\beta}, T_{n_s})
        \]
        for some non-commutative polynomials $P_0'$ and $P_{\chi}'$. Since $J$ contains $e_{\chi}$ for $\chi \neq 1$ and $e_\mathbbm{1}-1$, we see that $\phi$ is congruent to a non-commutative polynomial $P_0(T_{\beta}, T_{n_s})$ in $T_{\beta}$ and $T_{n_s} \mod J$.

        Next, we claim that modulo $J$, we may write $\phi$ as an $\overline{\F}_p$-linear combination of the identity map and $T_{\beta}$. Indeed, let $m$ be a monomial in $P_0$. If the length of $m$ (the sum of powers of $T_{\beta}$ and $T_{n_s}$ appearing in $m$) is greater than or equal to $3$, then we may use the congruences $T_{n_s} \equiv 0$ and $T_{\beta}T_{n_s} \equiv - T_{\beta}$ modulo $J$ to reduce the length of $m$ by at least $1$. This means that $P_0$ is a linear combination of $1, T_{\beta}, T_{\beta}^2$ and $T_{\beta}T_{n_s}$ modulo $J$. The claim is then proved using the relations $T_{\beta}^2 = 1$ and $T_{\beta}T_{n_s} \equiv - T_{\beta}$ modulo $J$. Therefore, we may write
        \[
            \phi \equiv a + bT_{\beta} \mod J.
        \]
        Applying this to $[\id, 1]$, and noting that $[\id, 1]$ and $[\beta, 1]$ are linearly independent, we see that $\phi \in J$.
    \end{proof}

  Recall that $\pi_0 \simeq \frac{\IZind \mathbbm{1}}{({\im} T_{1, 2}, {\Ker} T_{1, 2})}$. Next, we compute the image of $\pi_0^{I(1)}$ under the reverse functor studied by Vign\'eras, Ollivier and others, and prove Theorem \ref{summand-in-reverse-vigneras-functor-intro} in the introduction. 
\begin{theorem}\label{summand-in-reverse-vigneras-functor}
   Let $\pi_0$ denote the universal supersingular representation of $G$. Then there exists a representation $M$ of $G$ such that
   \[
   \pi_0^{I(1)} \otimes_{\cH} \IIZind \mathbbm{1} \simeq \pi_0 \oplus M.
   \]
\end{theorem}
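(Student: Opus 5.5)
We plan to split $\pi_0^{I(1)}$ as an $\cH$-module into two pieces, apply the exact-on-direct-sums functor $-\otimes_{\cH}\IIZind \mathbbm{1}$, and identify the piece generated by $[\id,1]$ with $\pi_0$.

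\emph{Step 1: splitting the module.} By the action table in \S\ref{action-on-I(1)-invariants}, $H$ acts trivially on $[\id,1],[\beta,1]$ and by nontrivial characters $(d/a)^{p^l}$, $(a/d)^{p^l}$ on $s_n^{p^l}$, $\beta s_n^{p^l}$. These characters are nontrivial since $q>3$. The subspaces
\[
N_0 := \langle [\id,1],[\beta,1]\rangle, \qquad N_1 := \langle s_n^{p^l}, \beta s_n^{p^l}\rangle_{n\geq 2,\, 0\leq l\leq f-1}
\]
are each stable under $T_\beta$, $T_{n_s}$ and all $e_\chi$. This is read directly off the table; note that $\beta s_n^{p^l}\cdot T_{n_s}=-s_{n+1}^{p^l}\in N_1$. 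Hence $\pi_0^{I(1)} = N_0\oplus N_1$ as right $\cH$-modules, and
\[
\pi_0^{I(1)}\otimes_{\cH}\IIZind \mathbbm{1} \simeq \bigl(N_0\otimes_{\cH}\IIZind \mathbbm{1}\bigr)\oplus M,
\qquad M := N_1\otimes_{\cH}\IIZind \mathbbm{1}.
\]
It remains to show $N_0\otimes_{\cH}\IIZind \mathbbm{1}\simeq \pi_0$.

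\emph{Step 2: presenting $N_0$.} The module $N_0$ is the cyclic module $[\id,1]\cdot\cH$, so by Lemma~\ref{Annihilator lemma} we have $N_0\simeq \cH/J$, where $J$ is the right ideal generated by $T_{n_s}$, $T_\beta(T_{n_s}+1)$ and $e_\chi$ for $\chi\neq\mathbbm{1}$. Identifying $\cH$ with $\IIZind\mathbbm{1}$ acting on the left, we get
\[
\cH/J\otimes_{\cH}\IIZind\mathbbm{1} \simeq \frac{\IIZind \mathbbm{1}}{J\cdot \IIZind\mathbbm{1}},
\]
where $J\cdot\IIZind\mathbbm{1}$ is the sum of the images of $T_{n_s}$, $T_\beta(T_{n_s}+1)$ and $e_\chi$ for $\chi\neq\mathbbm{1}$. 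Care is needed with the left/right conventions; the operators should be applied in the reversed order compatible with the formulas given for the Hecke action on $\IIZind\mathbbm{1}$.

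\emph{Step 3: passing to Iwahori induction.} We first quotient by the images of $e_\chi$ for $\chi\neq\mathbbm{1}$. By Lemma~\ref{Comparison theorem} (with $r=0$ and $\chi=\mathbbm{1}$) this yields $\IZind\mathbbm{1}$, with $T_\beta\mapsto T_{1,0}$ and $T_{n_s}\mapsto T_{1,2}T_{1,0}$. The remaining relations become the images of $T_{1,2}T_{1,0}$ and of $T_{1,0}(T_{1,2}T_{1,0}+1)$, possibly with the factors reversed according to the convention. Since $T_{1,0}$ is invertible, $\im(T_{1,2}T_{1,0})=\im T_{1,2}$. Modulo $\im T_{1,2}$, the second operator contributes $\im T_{1,0}(T_{1,2}T_{1,0}+1)$, whose image modulo $\im T_{1,2}$ is
\[
\{\,T_{1,0}T_{1,2}T_{1,0}x + T_{1,0}x\,\} = \im(T_{-1,0}+T_{1,0}),
\]
using \eqref{T-10 in the non-commutative case}. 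By Lemma~\ref{L1}(ii), $\im(T_{-1,0}+T_{1,0})={\Ker}\,T_{1,2}$. The quotient is therefore $\IZind\mathbbm{1}/({\im}T_{1,2},{\Ker}T_{1,2})=\tau\simeq\pi_0$, as required.

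\emph{Main obstacle.} The delicate point is Step 3. We must check that the relations coming from $J$, transported through Lemma~\ref{Comparison theorem} and with left/right conventions tracked correctly, produce exactly ${\im}T_{1,2}+{\Ker}T_{1,2}$. If the order is reversed, we would instead get images such as $\im(T_{1,0}T_{1,2})$ and $\im((T_{1,2}T_{1,0}+1)T_{1,0})$. The first equals $T_{1,0}\,\im T_{1,2}$, and we would then need the identity $T_{1,0}\,\im T_{1,2} + \im T_{1,2} = \im T_{1,2} + {\Ker}\,T_{1,2}$, or we would need to re-express the generators of $J$ to match. I would first verify this by checking that the surjection $\IIZind\mathbbm{1}\to\tau$, $[\id,1]\mapsto[\id,1]$, kills $J\cdot\IIZind\mathbbm{1}$ (this follows from the table), and then check the reverse containment directly.
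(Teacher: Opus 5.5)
Your proposal is correct and follows the same route as the paper. You split off the summand $[\id,1]\cH$ of $\pi_0^{I(1)}$, present it as $\cH/J$ via Lemma~\ref{Annihilator lemma}, and pass to $\IZind \mathbbm{1}$ via Lemma~\ref{Comparison theorem}. Your explicit check that the relations become ${\im} T_{1,2}+{\Ker} T_{1,2}$ (using \eqref{T-10 in the non-commutative case} and Lemma~\ref{L1}(ii)) fills in a step the paper leaves implicit. The convention worry resolves in your favour: $\cH$ acts on $\pi_0^{I(1)}=\mathrm{Hom}_G(\IIZind \mathbbm{1},\pi_0)$ by precomposition, so $v\cdot(AB)=(v\cdot A)\cdot B$ corresponds to the composite $A\circ B$ on $\IIZind \mathbbm{1}$, and the unreversed order in your Step 3 is the correct one.
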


\begin{proof}
Using $\IIZind \mathbbm{1} \simeq \bigoplus_{\chi} {\im}e_{\chi}$, we see that
    \[
        \pi_0^{I(1)} \otimes_{\cH} \IIZind \mathbbm{1} \simeq \left([\id, 1]\cH \otimes_{\cH} \IIZind \mathbbm{1}\right) \oplus M
    \]
    for some representation $M$. Moreover, it can be seen that the image of the composition of $\IZind \mathbbm{1} \hookrightarrow \IIZind \mathbbm{1} \twoheadrightarrow \pi_0^{I(1)} \otimes_{\cH} \IIZind \mathbbm{1}$ is exactly $[\id, 1]\cH \otimes_{\cH} \IIZind \mathbbm{1}$, which is further isomorphic to the quotient of $\IIZind \mathbbm{1}$ by images of the Hecke operators belonging to the annihilator of $[\id, 1]\cH$.
     Finally, using Lemma~\ref{Annihilator lemma} and Lemma~\ref{Comparison theorem}, we see that 
    \[
        [\id, 1]\cH \otimes_{\cH} \IIZind \mathbbm{1} \simeq \frac{\IIZind \mathbbm{1}}{({\im} T_{n_s}, {\im} T_{\beta}(T_{n_s} + 1), \oplus_{\chi \neq \mathbbm{1}} {\im} e_{\chi})} \simeq \pi_0.
    \]
    This shows that $\pi_0$ is a direct summand of $\pi_0^{I(1)} \otimes_{\cH} \IIZind \mathbbm{1}$.
\end{proof}
    \section*{Acknowledgements}
        The first author is supported by the National Research Foundation of Korea (NRF) grant funded by the Korea government (MSIT) (No. RS-2025-02262988 and No. RS-2025-00517685). The second author would like to thank the Kerala School of Mathematics (KSoM) for its support and excellent research environment. The third author acknowledges the financial assistance as a Junior Research Fellowship (NET) from UGC, India.
\printbibliography
    
\end{document}